\setlist{leftmargin=2\parindent}
\newenvironment{smatrix}{\left( \begin{smallmatrix} } {\end{smallmatrix} \right) }
\newcommand{\stbt}[4]{\begin{smatrix}#1 & #2 \\ #3 & #4\end{smatrix}}
\theoremstyle{plain}
\newtheorem{theorem}{Theorem}[subsection]
\newtheorem{introtheorem}{Theorem}
\newtheorem{lemma}[theorem]{Lemma}
\newtheorem{proposition}[theorem]{Proposition}
\newtheorem{corollary}[theorem]{Corollary}
\newtheorem{definition}[theorem]{Definition}
\newtheorem{notation}[theorem]{Notation}
\newtheorem{assumption}[theorem]{Assumption}
\theoremstyle{remark}
\declaretheorem[name=Remark, sibling=theorem, qed={\lower-0.3ex\hbox{$\diamond$}}]{remark}
\declaretheorem[name=Note, sibling=theorem, qed={\lower-0.3ex\hbox{$\diamond$}}]{note}
\newcommand{\colim@}[2]{%
 \vtop{\m@th\ialign{##\cr
   \hfil$#1\operator@font colim$\hfil\cr
   \noalign{\nointerlineskip\kern1.5\ex@}#2\cr
   \noalign{\nointerlineskip\kern-\ex@}\cr}}%
}
\newcommand{\colim}{%
 \mathop{\mathpalette\colim@{\rightarrowfill@\scriptscriptstyle}}\nmlimits@
}
\renewcommand{\varprojlim}{%
 \mathop{\mathpalette\varlim@{\leftarrowfill@\scriptscriptstyle}}\nmlimits@
}
\DeclareMathOperator{\AJ}{AJ}
\DeclareMathOperator{\BGG}{BGG}
\DeclareMathOperator{\DR}{DR}
\DeclareMathOperator{\Fil}{Fil}
\DeclareMathOperator{\GL}{GL}
\DeclareMathOperator{\Gal}{Gal}
\DeclareMathOperator{\Gr}{Gr}
\DeclareMathOperator{\Res}{Res}
\DeclareMathOperator{\ES}{ES}
\DeclareMathOperator{\Iw}{Iw}
\DeclareMathOperator{\Pz}{Pz}
\DeclareMathOperator{\Spec}{Spec}
\DeclareMathOperator{\Sym}{Sym}
\DeclareMathOperator{\coh}{coh}
\DeclareMathOperator{\AF}{AF}
\DeclareMathOperator{\crit}{crit}
\DeclareMathOperator{\fp}{fp}
\DeclareMathOperator{\loc}{loc}
\DeclareMathOperator{\mot}{mot}
\DeclareMathOperator{\pr}{pr}
\DeclareMathOperator{\syn}{syn}
\DeclareMathOperator{\tr}{tr}
\DeclareMathOperator{\cl}{cl}
\DeclareMathOperator{\CG}{CG}
\DeclareMathOperator{\As}{As}
\DeclareMathOperator{\RG}{R\Gamma}
\newcommand{\imp}{\mathrm{imp}}
\newcommand{\f}  {\mathrm{f}}
\newcommand{\m}  {\mathrm{m}}
\newcommand{\sph}{\mathrm{sph}}
\newcommand{\dR} {\mathrm{dR}}
\newcommand{\et} {\text{\textup{\'et}}}
\newcommand{\ord}{\mathrm{ord}}
\renewcommand{\ss} {\mathrm{ss}}
\newcommand{\rig}{\mathrm{rig}}
\newcommand{\Eis}{\mathrm{Eis}}
\newcommand{\fN}{\mathfrak{N}}
\renewcommand{\AA}{\mathbb{A}}
\newcommand{\CC}{\mathbb{C}}
\newcommand{\MM}{\mathbb{M}}
\newcommand{\QQ}{\mathbb{Q}}
\newcommand{\RR}{\mathbb{R}}
\newcommand{\VV}{\mathbb{V}}
\newcommand{\YY}{\mathbb{Y}}
\newcommand{\XX}{\mathbb{X}}
\newcommand{\ZZ}{\mathbb{Z}}
\newcommand{\Af}{\AA_\f}
\newcommand{\Qp}{\QQ_p}
\newcommand{\QQbar}{\overline{\QQ}}
\newcommand{\Zp}{\ZZ_p}
\newcommand{\Pif}{\Pi_\f}
\newcommand{\RGt}{\widetilde{R\Gamma}}
\newcommand{\Dcris}{\mathbb{D}_{\mathrm{cris}}}
\newcommand{\DdR}{\mathbb{D}_{\dR}}
\newcommand{\Nek}{Nekov\'a\v{r}\xspace}
\newcommand{\Niz}{Nizio\l\xspace}
\newcommand{\bfj}{\mathbf{j}}
\newcommand{\cA}{\mathcal{A}}
\newcommand{\cE}{\mathcal{E}}
\newcommand{\cF}{\mathcal{F}}
\newcommand{\cH}{\mathcal{H}}
\newcommand{\cO}{\mathcal{O}}
\newcommand{\cS}{\mathcal{S}}
\newcommand{\cU}{\mathcal{U}}
\newcommand{\cV}{\mathcal{V}}
\newcommand{\cW}{\mathcal{W}}
\newcommand{\cX}{\mathcal{X}}
\newcommand{\cY}{\mathcal{Y}}
\newcommand{\cLPR}{\mathcal{L}_{\mathrm{PR}}}
\newcommand{\sX}{\mathscr{X}}
\newcommand{\fP}{\mathfrak{P}}
\newcommand{\into}{\hookrightarrow}
\newcommand{\sC}{\mathscr{C}}
\newcommand{\uPi}{\underline{\Pi}}
\newcommand{\uet}{\underline{\eta}}
\newcommand{\wH}{\widetilde{H}}
\newcommand{\pp}{\mathfrak{p}}
\numberwithin{equation}{section}
\renewcommand{\le}{\leqslant}
\renewcommand{\leq}{\leqslant}
\renewcommand{\ge}{\geqslant}
\renewcommand{\geq}{\geqslant}
\title{Asai--Flach classes, $p$-adic L-functions, and the Bloch--Kato conjecture for $\operatorname{GO}(4)$}
\subjclass[2020]{11F41, 11F33, 11G18, 14G35}
\keywords{Hilbert modular varieties, $p$-adic modular forms, higher Hida theory, coherent cohomology of Shimura varieties}
\author{Giada Grossi}
\address[G.~Grossi]{CNRS, Institut Galilée, Université Sorbonne Paris Nord, 93430 Villetaneuse, France}
\email{grossi@math.univ-paris13.fr}
\urladdr{\href{https://orcid.org/0000-0002-0883-4494}{0000-0002-0883-4494}}
\author{David Loeffler}
\address[D.~Loeffler]{Faculty of Mathematics and Computer Science, UniDistance Suisse, Schinerstrasse 18, 3900 Brig, Switzerland}
\email{david.loeffler@unidistance.ch}
\urladdr{\href{http://orcid.org/0000-0001-9069-1877}{0000-0001-9069-1877}}
\author{Sarah Livia Zerbes}
\address[S.L.~Zerbes]{Department of Mathematics, ETH Z\"urich, R\"amistrasse 101, 8092 Z\"urich, Switzerland}
\email{sarah.zerbes@math.ethz.ch}
\urladdr{\href{http://orcid.org/0000-0001-8650-9622}{0000-0001-8650-9622}}
\thanks{Supported by European Research Council Consolidator Grant No. 101001051 (\textsc{ShimBSD}) (Loeffler) and US National Science Foundation Grant No. DMS-1928930 (all authors)}
\begin{document}
 \maketitle

\makeatletter
\patchcmd{\@tocline}
{\hfil}
{\leaders\hbox{\,.\,}\hfil}
{}{}
\makeatother

\setcounter{tocdepth}{1}
\tableofcontents

\section{Introduction}

 \subsection{Overview}

  The Bloch--Kato conjecture, which relates the dimensions of Selmer groups of geometric $p$-adic Galois representations to the order of vanishing of their $L$-functions, is one of the outstanding open problems of number theory. One of the most successful approaches to this problem so far has been via the method of Euler systems: norm-compatible families of global cohomology classes for Galois representations, which serve to give upper bounds for Selmer groups. These are related to $L$-functions by so-called \emph{explicit reciprocity laws}, relating the localisation of the Euler system classes at $p$ to the values of the $L$-function. This can be used as a criterion for the non-vanishing of Euler systems, a crucial input in applications to the Bloch--Kato conjecture (and related problems such as the Iwasawa Main Conjecture).

  One strategy for proving such explicit reciprocity laws, introduced in the seminal paper \cite{BDP13} in the context of Heegner points, proceeds as follows. Supposing we already have an Euler system for some Galois representation $V$, we aim to carry out the following steps:
  \begin{enumerate}[(1)]
   \item Construct a $p$-adic $L$-function, for some $p$-adic family of Galois representations specialising to $V$, whose values in a suitable interpolation range are critical values of the corresponding complex $L$-functions.
   \item Relate values of this $p$-adic $L$-function at suitable points \emph{outside} the range of interpolation to Bloch--Kato logarithms of Euler system classes.
   \item Via analytic continuation, deduce a relation between Euler systems and $p$-adic $L$-values at points \emph{inside} the interpolation range, where the $p$-adic $L$-function specialises to the complex $L$-value.
  \end{enumerate}

  This strategy was successfully carried out for the Euler system of Beilinson--Flach elements (attached to a Rankin--Selberg convolution of modular forms) in \cite{BDR-BeilinsonFlach, KLZ17}; and more recently in the $\operatorname{GSp}_4$ setting in \cite{LPSZ1, LZ20}. In the present paper we focus on the \emph{Euler system of Asai--Flach classes}, introduced in \cite{leiloefflerzerbes18}, which is an Euler system for the Asai, or twisted tensor, Galois representation associated to a Hilbert modular form for a real quadratic field. (These Galois representations are naturally essentially self-dual, factoring through the orthogonal group $\operatorname{GO}(4)$. Conversely, it follows from the Fontaine--Mazur conjecture that all 4-dimensional, irreducible, essentially self-dual geometric representations of $\Gal(\QQbar / \QQ)$ with distinct Hodge--Tate weights arise either as Rankin--Selberg convolutions, as spin representations of $\operatorname{GSp}(4)$, or as Asai representations of quadratic Hilbert modular forms, so we have Euler system constructions in all cases.)

 In this setting, step (1) of the above strategy was carried out in the prequel paper \cite{grossiloefflerzerbesLfunct}, building on the higher Hida theory for Hilbert modular groups developed in \cite{grossi21}. The goal of the present paper is to carry out Steps (2) and (3). As applications, we prove one inclusion of the cyclotomic Iwasawa Main Conjecture and the  Bloch--Kato conjecture for the Asai $L$-functions of Hilbert modular forms over real quadratic fields (under various technical hypotheses, see below).

 \subsection{Main results}

  The main results of this paper are the following:

  \begin{introtheorem}
   \label{intro:BKIwapplication}
   Let $\Pi$ be an automorphic representation of $\Res_{F / \QQ} \GL_2$ of weight $(k_1 + 2, k_2 + 2)$, for some $k_1 > k_2 \ge 0$ with $k_1 = k_2 \bmod 2$, and level $\fN$, and $p > 2$ a prime. Suppose that
   \begin{itemize}
    \item $p$ is split in $F$,
    \item $\Pi$ is unramified and ordinary at the primes above $p$,
    \item the Galois representation satisfies a ``big image'' condition (see \cref{assPi}),
    \item the local Euler factor $P_p(\Pi, X)$ does not vanish at $p^j$ for any $j \in \ZZ$.
   \end{itemize}

   Then:
   \begin{itemize}
    \item (\cref{thm:BK}) For each $j$ with $k_2 + 1 \le j \le k_1$, the Bloch--Kato Selmer group of $V_p^{\As}(\Pi)^*(-j)$ is zero.
    \item (\cref{thm:IMC}) The characteristic ideal of the dual Selmer group of $\Pi$ over $\QQ(\mu_{p^\infty})$ divides $p^r \cdot L_{p, \As}(\Pi)$ for some $r \ge 0$, where $L_{p, \As}(\Pi)$ is (up to a shift) the $p$-adic Asai $L$-function for $\Pi$ constructed in \cite{grossiloefflerzerbesLfunct}.
   \end{itemize}
  \end{introtheorem}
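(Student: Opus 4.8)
The plan is to deduce both statements from the Euler system machinery, using the Asai--Flach Euler system for $\Pi$ as input. First I would recall the construction: for each squarefree integer $m$ coprime to $p \cN_\Pi$, one has classes
\[
 c_m \in H^1\bigl(\QQ(\mu_m), V_p^{\As}(\Pi)^*(1-j)\bigr)
\]
satisfying the Euler system norm-compatibility relations, built from pushforwards of Siegel units or Beilinson--Flach elements along the Hilbert modular surface attached to $F$. This is exactly the Euler system constructed in \cite{grossiloefflerzerbesLfunct}, and its bottom class $c_1$ maps under the dual exponential (or the syntomic/Perrin-Riou regulator) to the critical Asai $L$-value. Under the ordinarity hypothesis at $p$ (which uses $p$ split in $F$), one can $p$-stabilise and interpolate in cyclotomic direction to obtain an Iwasawa cohomology class $\mathbf{c} \in H^1_{\Iw}\bigl(\QQ(\mu_{p^\infty}), \cdot\bigr)$ whose image under the big logarithm map of Perrin-Riou is the $p$-adic Asai $L$-function $\cL_{p,\As}(\Pi)$ of \cite{grossiloefflerzerbesLfunct}.

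Second I would invoke the general Euler system bounding theorem (in the form of Rubin, Mazur--Rubin, or Kato, adapted as in \cite{KLZ17} and \cite{LZ20-yoshida} to the Iwasawa-theoretic setting over $\QQ(\mu_{p^\infty})$). The abstract input required is: (i) the ``big image'' hypothesis \cref{assPi}, which guarantees the Galois module is suitably irreducible and that the required Kolyvagin-system cohomology classes exist; (ii) nonvanishing of the bottom class, which follows from the non-triviality of the $p$-adic $L$-function together with the hypothesis that $P_p(\Pi,X)$ avoids the bad values $p^j$ (this is what makes the regulator comparison an equality rather than merely a divisibility with possible extra Euler factors); and (iii) the local conditions at $p$ defining the Selmer group, which under ordinarity are the Greenberg-style conditions matching the Panchishkin subrepresentation. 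Feeding this in yields that $\mathrm{char}\bigl(\text{dual Selmer over }\QQ(\mu_{p^\infty})\bigr)$ divides $\mathrm{char}\bigl(H^1_{\Iw}/\Lambda\cdot\mathbf{c}\bigr)$, and the latter divides the image of $\mathbf{c}$ under the regulator, i.e.\ divides $\cL_{p,\As}(\Pi)$ up to the explicit power of $p$ coming from denominators in Perrin-Riou's map and the $p$-stabilisation --- this is the source of the $p^r$ in \cref{thm:IMC}.

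Third, for the Bloch--Kato statement (\cref{thm:BK}) I would specialise the Iwasawa main conjecture divisibility at the cyclotomic characters $x \mapsto x^j$ for $k_2+1 \le j \le k_1$. Because $\Pi$ is paritious, these are the critical points and the complex Asai $L$-value there is a nonzero (it is not a central value), hence by the interpolation property its $p$-adic avatar $\cL_{p,\As}(\Pi)$ does not vanish at the corresponding specialisation --- again using that $P_p(\Pi, p^j)\ne 0$ so no interpolation factor kills it. Then the specialised divisibility forces the characteristic ideal of the specialised dual Selmer group to be the unit ideal, and a control theorem (comparing the Iwasawa Selmer group's specialisation with the Bloch--Kato Selmer group of $V_p^{\As}(\Pi)^*(-j)$, valid because the relevant local $H^0$ and $H^2$ terms vanish by the big-image and local hypotheses) gives $H^1_f\bigl(\QQ, V_p^{\As}(\Pi)^*(-j)\bigr)=0$.

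**Main obstacle.** The hardest part is the regulator computation tying the Euler system class to the $p$-adic $L$-function precisely --- i.e.\ the explicit reciprocity law identifying $\mathrm{Log}(\mathbf{c})$ with $\cL_{p,\As}(\Pi)$ --- together with verifying that all local conditions at the primes dividing $\cN_\Pi$ and at $p$ are compatible between the Euler system norm relations and the Selmer structure; any discrepancy shows up as extra Euler factors or as the indeterminate power $p^r$. In practice this reciprocity law is the technical heart and is presumably established (or cited) in \cite{grossiloefflerzerbesLfunct}; modulo that, the deduction above is the standard Euler-system-to-Selmer-bound argument.
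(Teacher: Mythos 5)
There is a genuine gap, and it sits exactly where you locate the ``main obstacle'': you treat the explicit reciprocity law identifying the Perrin-Riou image of the Euler system class with $\cL_{p,\As}(\Pi)$ as something ``presumably established (or cited) in \cite{grossiloefflerzerbesLfunct}''. It is not: \cite{grossiloefflerzerbesLfunct} only constructs the $p$-adic $L$-function (and the Euler system itself is from \cite{leiloefflerzerbes18}, not from \cite{grossiloefflerzerbesLfunct}). Proving that reciprocity law is the actual content of the paper, and it cannot be done for a single form $\Pi$ by the route you sketch. The point you miss is that the Asai--Flach class is a \emph{geometric} class: the regulator formula of \cite{grossiloefflerzerbesreg} relates its Bloch--Kato logarithm to values of the $p$-adic $L$-function only at points $(P,j)$ in the geometric range $0 \le j \le \min(k_1+2a_1,k_2+2a_2)$, whereas the interpolation of critical complex $L$-values happens in the disjoint critical range $k_2+2a_2 < j \le k_1+2a_1$. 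For fixed weight the geometric specialisations in the cyclotomic variable alone are not Zariski-dense, so one cannot propagate the comparison from the geometric to the critical range without making \emph{both} weight components move independently; this forces the construction of a new three-variable $p$-adic Asai $L$-function over a two-parameter Hida family (Theorem B of the paper) and the use of Sheth's family version of the Euler system \cite{sheth}.

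Even granting that, the deduction is not the ``standard'' one: the comparison at geometric points is made through the Eichler--Shimura isomorphism between coherent cohomology and $\Dcris$ of a rank-one subquotient, and it is not known that this isomorphism interpolates over the Hida family. The paper only obtains a \emph{meromorphic} Eichler--Shimura isomorphism over $\operatorname{Frac}\cO(\cU)$ (Theorem C), constructed by using the Euler system and regulator formula themselves, and valid at all but finitely many classical points; a priori $\Pi$ could be one of the bad points, in which case $\mathcal{L}_{\mathrm{PR}}(z_1(\Pi,\mathbf{j}))$ could even vanish identically. This is repaired by a leading-term argument along the diagonal weight direction, using the non-vanishing of the critical values, Kolyvagin-system/core-rank-zero input from \cite{mazurrubin04}, and the method of \cite{LZ20-yoshida}, to produce a modified Euler system $(\hat{\mathbf{z}}_m)$ satisfying the reciprocity law unconditionally. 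Your proposal contains none of this (nor the removal of the $c$-smoothing factor via twisting or the functional equation). The final steps you describe --- feeding the reciprocity law into the Euler-system bound of \cite{leiloefflerzerbes18} under the big-image hypothesis, then specialising and descending to get the Bloch--Kato statement using non-vanishing of non-central critical values and the hypothesis on $P_p(\Pi,X)$ --- do match the paper's concluding section, but they are the easy part; as written, your argument assumes precisely the theorem-sized input that the paper exists to supply.
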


  For more details of the notations and the definitions of the objects involved, we refer to the body of the paper.

  We briefly survey the main steps towards \cref{intro:BKIwapplication}. The first main ingredient is a \emph{regulator formula}, corresponding to Step (2) in the strategy outlined above. (This is true under somewhat more general hypotheses than \cref*{intro:BKIwapplication}; we refer to the body of the paper for the precise assumptions.)

  \begin{introtheorem}[{\cref{thm:regulator}}]
   \label{intro:regformula}
   For each $j \in \ZZ$ with $0 \le j \le \min(k_1, k_2)$, the Bloch--Kato logarithm of the Asai--Flach class $\AF^{[\Pi, j]} \in H^1(\QQ, V_p^{\As}(\Pi)^*(-j))$ can be expressed in terms of the $p$-adic $L$-value $L_{p, \As}\left(\Pi, 1 + j\right)$.
  \end{introtheorem}

  The proof of this theorem relies on the use of higher Coleman theory (as developed in \cite{boxerpilloni22}, and generalised in \cite{LZ-plectic} to impose finite-slope conditions at only one prime above $p$); and the \emph{Pozn\'an spectral sequence} relating higher Coleman theory to syntomic cohomology, first introduced in \cite{LZ20}. More precisely, we compute the pairing of the Bloch--Kato logarithm of the localisation at $p$ of the class $\AF^{[\Pi, j]}$ with a class $\eta_{\dR}$, which belongs to a certain graded piece of the middle degree de Rham cohomology of the Hilbert modular surface (see $\S$5). To do this, we lift such class to finite-polynomial cohomology in $\S$6. The construction of the $p$-adic $L$-function of \cite{grossiloefflerzerbesLfunct}, as recalled in $\S4$, involves the coherent cohomology incarnation of $\eta_{\dR}$ lifted to a class in coherent cohomology of the $\pp_1$-ordinary locus with compact support (via higher Coleman theory). Hence to prove Theorem \ref{thm:regulator} we lift the finite-polynomial cohomology class to a class over the $\pp_1$-ordinary locus in $\S7$ and, in order to exploit the liftings of both partial Frobenii, restrict it to the full ordinary locus in $\S8$, where the \emph{Pozn\'an spectral sequence} is recalled and used to describe this class in terms of classes in coherent cohomology. Finally in $\S9$, we use the explicit description of the syntomic relisation of the Eisenstein class restricted to the ordinary locus of the modular curve as well as some compatibility of (partial) unit root splittings to compute the pairing in terms of values of the $p$-adic $L$-function.

  Although the statement of \cref*{intro:BKIwapplication} involves only a single automorphic representation $\Pi$, the next steps in its proof require the use of $p$-adic families of automorphic representations (Hida families). More precisely, we consider a two-parameter family $\uPi$ of automorphic representations specialising to $\Pi$, with both components of the weight independently varying, and we prove the following theorem:

  \begin{introtheorem}[{\cref{thm:padicL}}]
   There exists a 3-variable $p$-adic Asai $L$-function for $\uPi$, interpolating all critical values of the $L$-functions for all classical specialisations of $\uPi$.
  \end{introtheorem}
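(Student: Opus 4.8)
\emph{Strategy.} The plan is to build $\cL_{p,\As}(\underline{\Pi})$ as the Hida-family version of the Rankin--Selberg construction which, for a single $\Pi$, produces the $p$-adic $L$-function $\cL_{p,\As}(\Pi)$ of \cite{grossiloefflerzerbesLfunct}. Fix the two primes $\pp_1,\pp_2$ of $F$ above $p$ (this is where the hypothesis ``$p$ split'' enters). Recall that if $g$ is the Hilbert eigenform attached to a classical $\Pi$ of weight $(k_1+2,k_2+2)$ and $\iota\colon \GL_2/\QQ\into \Res_{F/\QQ}\GL_2$ is the diagonal embedding, then for $k_2+1\le j\le k_1$ the critical value $L(\Pi,\As,j)$ is given, up to explicit archimedean constants and a period, by a Petersson-type integral of the shape $\langle\iota^*(g),\,\cE_j\rangle$, where $\cE_j$ is a suitable (nearly holomorphic) Eisenstein series on $\GL_2/\QQ$ whose spectral parameter encodes $j$; unfolding this integral recovers $L(\Pi,\As,s)$. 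The three variables of $\cL_{p,\As}(\underline{\Pi})$ will accordingly be the two weight parameters $(k_1,k_2)$ of the family and the cyclotomic variable governing $j$, and the object produced lives in $\operatorname{Frac}\bigl(\Lambda_{\underline{\Pi}}\mathbin{\widehat{\otimes}}\Lambda_{\mathrm{cyc}}\bigr)$ --- and, after the normalisation discussed below, in that ring itself. The construction needs only that $p$ splits, that $\Pi$ is unramified and ordinary at $\pp_1,\pp_2$ (so that Hida theory applies), and --- to ensure that the interpolation factor is nonzero at every classical point --- that $P_p(\Pi,X)$ does not vanish at integral powers of $p$; the big-image hypothesis of \cref{assPi} plays a role only in the subsequent Euler-system arguments, not here.

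\emph{Main steps.} \textbf{(1)} Apply nearly-ordinary Hida theory for Hilbert modular forms (Hida, Wiles) to obtain the $\Lambda_{\underline{\Pi}}$-adic eigenform $\underline{g}$ specialising at each arithmetic point to the ordinary $p$-stabilisation of the corresponding classical $g$ with respect to both $U_{\pp_1}$ and $U_{\pp_2}$; classical arithmetic specialisations are Zariski-dense in $\Spec\Lambda_{\underline{\Pi}}$. \textbf{(2)} Construct a $\Lambda$-adic Eisenstein series $\cE$ on $\GL_2/\QQ$ of tame level prime to $p\cN_\Pi$ and carrying the cyclotomic variable (with an auxiliary ``$c$-smoothing'' ensuring $\Lambda$-integrality and killing its constant term), whose arithmetic specialisations are the $p$-stabilisations --- modified if necessary by $p$-adic Maass--Shimura/Katz differential operators to match weights --- of the Eisenstein series $\cE_j$ appearing in Asai's formula; such families are standard (Katz, Deligne--Ribet), and $p$-stabilisation introduces a computable Euler factor at $p$. \textbf{(3)} Form $\iota^*(\underline{g})$ via the $\Lambda$-adic restriction-to-the-diagonal map (a modular curve inside a Hilbert modular surface, on ordinary loci), apply Hida's ordinary projector $e_{\ord}$, and pair with $\cE$ using Hida's $\Lambda$-adic Petersson product; this yields $\cL_{p,\As}(\underline{\Pi})$ as an element of the three-variable Iwasawa algebra, a priori of its fraction field. \textbf{(4)} Verify the interpolation property at each classical $(k_1,k_2,j)$ with $j$ critical: this needs (a) a holomorphic-projection computation turning the nearly holomorphic $\cE_j$ into a holomorphic form and producing the correct archimedean factor, (b) the local zeta integral at $p$, which produces a modification Euler factor built from the two $U_{\pp_i}$-eigenvalues and $p^{-j}$ --- nonzero by the $P_p(\Pi,X)$ hypothesis --- and (c) matching the resulting period with the one fixed in \cite{grossiloefflerzerbesLfunct}, so that the one-dimensional specialisation at $\Pi$ recovers exactly $\cL_{p,\As}(\Pi)$. \textbf{(5)} Finally, control denominators: a congruence-module argument (Hida; compare the $\GL_2\times\GL_2$ case in \cite{KLZ17}) shows that, after normalising by the canonical $\Lambda$-adic congruence period of the family, $\cL_{p,\As}(\underline{\Pi})$ lies in the Iwasawa algebra itself --- possibly only up to a bounded power of $p$, which is the source of the factor $p^r$ in \cref{thm:IMC}.

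\emph{Main obstacle.} I expect the crux to be Step (4), and inside it the simultaneous control of the archimedean and $p$-adic normalisations: one must carry out the holomorphic projection of the real-analytic Eisenstein series \emph{uniformly} over the family, so that the output genuinely varies $p$-adic-analytically and the period can be taken to be the single $\Lambda$-adic congruence period rather than a point-by-point Petersson norm; and one must pin down the precise shape of the $p$-adic multiplier when $p=\pp_1\pp_2$ is split, so that the interpolation formula agrees with that of \cite{grossiloefflerzerbesLfunct} on the nose (this is also where any residual $p$-power imprecision is incurred). A subsidiary but not entirely routine difficulty is the bookkeeping for $\Res_{F/\QQ}\GL_2$: one must use the correct two-parameter nearly ordinary family, $p$-stabilise at \emph{both} primes above $p$, and arrange the diagonal embedding together with the auxiliary test data compatibly with the level structures so that the global integral unfolds to the Asai $L$-function with no spurious factors.
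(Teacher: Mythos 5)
There is a genuine gap, and it is located exactly where you place the ``main obstacle'': your step (4) cannot be carried out, because the critical values $k_2+2\le s\le k_1+1$ of the Asai $L$-function are not computed by pairing the \emph{holomorphic} restriction $\iota^*(g)$ against a nearly holomorphic Eisenstein series in a Petersson/holomorphic-projection framework. The restriction of the holomorphic vector has weight $k_1+k_2+4$, while the Eisenstein series whose special values land in the 1-dominant critical range has weight $k_1-k_2$ (this is visible in the $\Gamma(s)\Gamma(s-k_2-1)$ factor of \cref{thm:thmC}); the coherent realisation of $\Pi$ that actually enters the zeta integral for these values is the $w_1$-twisted one, i.e.\ the class $\eta_\Pi\in H^1\bigl(X_{G},\omega^{(-k_1,k_2+2;w)}(-D_G)\bigr)$, not an $H^0$ (holomorphic) class. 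Accordingly the period making these values algebraic is Harris's occult period $\Omega_\infty(\Pi)$ (and its $p$-adic counterpart $\Omega_p$), not a Petersson norm or a Hida congruence period, and ordinary degree-zero Hida theory for the family $\underline{g}$ gives you no way to interpolate the relevant $H^1$ classes in the two weight variables. This is precisely why the paper (following \cite{grossiloefflerzerbesLfunct} and \cite{grossi21}) works with \emph{higher} Hida theory: a classicality theorem for $e_{\pp_1}'e_{\pp_2}H^1_{w_1}$ over the $(\m,\m)$-locus at Iwahori level, a two-variable interpolation sheaf $\Omega^{(\kappa_1,\kappa_2)}$ specialising to $\omega^{(-k_1-2a_1,k_2+2a_2+2;w)}$, a rank-one family eigen-summand $\cS(\underline\Pi)$ with basis $\underline\eta$, and then the pairing of $\iota_{\Iw}^*(\underline\eta)\in H^1_c$ of the ordinary locus of the modular curve against Katz's Eisenstein measure in $H^0$ via Serre duality. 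The rough integral you write down with the holomorphic restriction does unfold to the Asai $L$-series, but the range of $s$ it reaches cohomologically is the \emph{geometric} range $0\le j\le\min(k_1+2a_1,k_2+2a_2)$ (the locus $\widetilde\Sigma_{\mathrm{geom}}$ used for the Euler-system/regulator side), not the critical one.

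Two consequent mismatches are worth noting. First, your step (5) has no counterpart: since the Asai integral unfolds directly and never projects onto an eigenform component on $H$, there is no congruence-module mechanism available, and none is needed — the paper's $\cL^{\imp}_{p,\As}(\underline\Pi,\underline\eta)$ is by construction an element of $\cO(\cU\times\cW)$ (a cup product of integral family classes), with the only ambiguity being the choice of $\underline\eta$; the $p^r$ in \cref{thm:IMC} comes from this normalisation ambiguity, not from denominators of the construction. Second, the $w_1$-twist forces an asymmetry between the two primes above $p$: the family is ordinary for $U_{\pp_1}'$ and $U_{\pp_2}$ (not for $U_{\pp_1}$ and $U_{\pp_2}$ symmetrically), and the family Hecke operators on $\Omega^{(\kappa_1,\kappa_2)}$ must be built to specialise to exactly these; your ``$p$-stabilise at both primes'' bookkeeping misses this, and it is not cosmetic — it encodes which rank-one subquotient of the Asai representation the regulator and reciprocity law later see.
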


  This strengthens a theorem proved in \cite{grossiloefflerzerbesLfunct} in which we obtained a 2-variable $p$-adic $L$-function, with $k_1$ and $j$ varying but $k_2$ fixed (but without requiring ordinarity at $\pp_2$).

  On the other hand, Sheth \cite{sheth25} has shown that the Euler system construction of \cite{leiloefflerzerbes18} also extends to such families; and \cref{intro:regformula} gives a relation between these objects at a set of points which is Zariski-dense in the parameter space. In order to derive from this an explicit reciprocity law relating the Euler system and $L$-function in the critical range, we need to show that the $p$-adic Eichler--Shimura comparison isomorphism, relating \'etale and de Rham cohomology, ``interpolates in Hida families'' in a suitable sense (analogous to results of Ohta \cite{ohta00} and Andreatta--Iovita--Stevens \cite{andreattaiovitastevens} for $\GL_2 / \QQ$). In our setting, we would hope for an isomorphism between two free rank 1 modules over $\cO(\cU)$, where $\cU$ is a small neighbourhood in weight space over which $\uPi$ is defined: one module $\cS_{w_1}(\uPi)$, arising from higher Hida theory, which interpolates the $\uPi$-eigenspace in coherent $H^1$ of Hilbert modular varieties; and another module $\Dcris(\MM_{\pp_1})$ which is $\Dcris$ of a rank-1 subquotient of the Asai Galois representation of $\uPi$. The Eichler--Shimura map gives canonical isomorphisms between the fibres of these modules at classical points, and we would hope for a single $\cO(\cU)$-module isomorphism having all of these ``pointwise'' isomorphisms as specialisations. What we actually prove is something a little weaker:

  \begin{introtheorem}[{\cref{thm:ERL}}]
   \label{intro:ESandERL}
   There exists an isomorphism between the above modules after base-extending to $\operatorname{Frac} \cO(\cU)$, interpolating the Eichler--Shimura isomorphisms for all but finitely many classical points.  Moreover, the composite of this isomorphism and Perrin-Riou's regulator map sends the Euler system to the $p$-adic $L$-function.
  \end{introtheorem}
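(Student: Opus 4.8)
The plan is to construct a single ``big'' Eichler--Shimura comparison $\Phi$ for the Hida family $\underline{\Pi}$ by interpolating $p$-adic Hodge theory over the associated tower of Hilbert modular varieties, and then to derive the reciprocity law from $\Phi$ together with the regulator formula of \cite{grossiloefflerzerbesreg}. Write $R=\cO(\cU)$; let $M_{\mathrm{coh}}$ be the rank-one free $R$-module supplied by higher Hida theory, interpolating the coherent $H^1$ of the Hilbert modular surface with the relevant automorphic line bundle, and $M_{\mathrm{cris}}$ the rank-one free $R$-module $\Dcris$ of the ordinary rank-one subquotient of the Asai representation of $\underline{\Pi}$. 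As both are free of rank one, $\Hom_R(M_{\mathrm{coh}},M_{\mathrm{cris}})$ is again free of rank one, so an isomorphism over $\operatorname{Frac}(R)$ is just a nonzero element of it; the substance of the first assertion is that some such element specialises, at every classical weight $x\in\cU$ outside a certain finite set, to the classical Eichler--Shimura isomorphism $\mathrm{ES}_x\colon M_{\mathrm{coh},x}\xrightarrow{\ \sim\ }M_{\mathrm{cris},x}$.

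I would construct $\Phi$ following the method of Ohta and of Andreatta--Iovita--Stevens for $\GL_2/\QQ$, but now using higher Hida theory for coherent cohomology of Hilbert modular varieties together with Faltings's comparison (equivalently, Scholze's Hodge--Tate period map) between completed \'etale and coherent cohomology along the Hilbert modular tower. This yields a morphism identifying $M_{\mathrm{coh}}$ with a Hodge--Tate graded piece of the big \'etale cohomology of the family, and hence with $M_{\mathrm{cris}}$. The obstruction to this being an isomorphism of $R$-modules is the degeneration of the relevant splitting of the Hodge--Tate filtration in the weight direction: it is only meromorphic on $\cU$, so \emph{a priori} one obtains only an element $\Phi\in\Hom_R(M_{\mathrm{coh}},M_{\mathrm{cris}})\otimes_R\operatorname{Frac}(R)$. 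Fixing the normalisations of all objects consistently and using compatibility of the construction with specialisation, $\Phi$ then specialises to $\mathrm{ES}_x$ at every classical $x$ away from the finitely many zeros and poles of the ensuing modifying factor, which is the first assertion.

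For the ``moreover'': by \cite{sheth} the Euler system of Lei--Loeffler--Zerbes extends to a class $\underline{\mathrm{ES}}$ in the cyclotomic Iwasawa cohomology of the Asai representation of $\underline{\Pi}$; \cref{thm:padicL} provides the $3$-variable $p$-adic Asai $L$-function $\cL_{p,\As}(\underline{\Pi})\in M_{\mathrm{coh}}\,\widehat{\otimes}\,\Lambda_{\mathrm{cyc}}$; and Perrin-Riou's big logarithm for the rank-one subquotient carries the $p$-local image of $\underline{\mathrm{ES}}$ to an element $\operatorname{Reg}(\underline{\mathrm{ES}})\in M_{\mathrm{cris}}\,\widehat{\otimes}\,\Lambda_{\mathrm{cyc}}$. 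The regulator formula of \cite{grossiloefflerzerbesreg} computes $\operatorname{Reg}(\underline{\mathrm{ES}})$ in terms of $\Phi$ and $\cL_{p,\As}(\underline{\Pi})$ at all $(y,\chi)$ with $y$ in a Zariski-dense set of non-classical weights of $\cU$; in other words, the two sections $\operatorname{Reg}(\underline{\mathrm{ES}})$ and $\Phi\bigl(\cL_{p,\As}(\underline{\Pi})\bigr)$ of the rank-one module $M_{\mathrm{cris}}\otimes_R\operatorname{Frac}(R)\,\widehat{\otimes}\,\Lambda_{\mathrm{cyc}}$ agree on a Zariski-dense set, hence are equal. This is exactly the claimed reciprocity law; and that $\Phi$ is genuinely an isomorphism, i.e.\ $\cL_{p,\As}(\underline{\Pi})\ne0$, is automatic, since in the paritious case the interpolated critical Asai $L$-values are nonzero.

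The main obstacle is the construction of $\Phi$ and the identification of where it fails to be integral: this is the higher-dimensional analogue of the $\GL_2/\QQ$ results of Ohta and Andreatta--Iovita--Stevens, and the degeneration of the Hodge--Tate period map in the weight direction appears genuinely to force both the passage to $\operatorname{Frac}(R)$ and the exclusion of finitely many classical points, so that the clean integral statement envisaged in the introduction is not directly available. A second, more bookkeeping-flavoured, difficulty is to match the normalisations --- the coherent periods built into $\cL_{p,\As}(\underline{\Pi})$, the de Rham periods occurring in the regulator formula of \cite{grossiloefflerzerbesreg}, and the classical Eichler--Shimura maps --- so that the equality of global sections really does specialise, at the critical classical points, to the explicit reciprocity law needed to run the Euler system argument behind \cref{thm:BK} and \cref{thm:IMC}.
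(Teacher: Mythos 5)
Your proposal rests on a direct construction of the big Eichler--Shimura map $\Phi$, by interpolating Faltings/Hodge--Tate comparison isomorphisms over the Hilbert modular tower in the style of Ohta and Andreatta--Iovita--Stevens. This is precisely the step that is \emph{not} available: no such interpolation result exists for degree-one coherent cohomology of Hilbert modular surfaces in higher Hida theory, and the paper explicitly flags it as an open problem (work in progress of Birkbeck--Williams, cf.\ the footnote in the introduction). Your sketch treats the degeneration of the Hodge--Tate splitting as merely forcing meromorphy, but the existence of \emph{any} meromorphic element of $\Hom_R(M_{\mathrm{coh}},M_{\mathrm{cris}})\otimes\operatorname{Frac}(R)$ specialising to the classical maps $\mathrm{ES}_x$ is exactly the content to be proved; pointwise isomorphisms at classical points do not glue to a meromorphic family without a genuine interpolation theorem, and you give no argument for one. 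So the foundation of your first assertion is a gap, not a bookkeeping issue.

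The paper's route is the reverse of yours: it \emph{uses} the Euler system in families (Sheth), the three-variable $p$-adic $L$-function (\cref{thm:padicL}), and the regulator formula of \cite{grossiloefflerzerbesreg} (\cref{thm:relation}) to \emph{construct} the meromorphic Eichler--Shimura map. Concretely (\cref{thm:ERL}), one picks an arbitrary isomorphism $j$ of the two rank-one modules, forms the ratio $R=\langle \cL_{\mathrm{PR}}(z_1),\, j(\underline\eta)\rangle / {}_c\cL^{\imp}_{p,\As}(\uPi,\underline\eta)$ in $\cO(\cU\times\cW)$, and uses \cref{thm:relation} at geometric points (which, note, have \emph{classical} weights $P$ and non-critical cyclotomic twists $j$ --- not non-classical weights as you state) to see that $R$ equals $j_P/\mathrm{ES}^1_{\Pi[P]}$ there, hence is independent of the cyclotomic variable by Zariski density; setting $\mathrm{ES}^1_{\uPi}=R^{-1}j$ gives both the interpolation at the good classical points $\Sigma_{\cl}^{\sharp}$ (Zariski-dense by \cref{lem:dense}) and the reciprocity law by construction. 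In particular the finitely many excluded classical points are those where the $p$-adic $L$-function vanishes at all geometric twists, not zeros or poles of a period factor coming from a direct comparison; and your closing remark that non-vanishing of critical values makes $\Phi$ an isomorphism conflates non-vanishing of the three-variable $L$-function (used only to show it is not a zero-divisor) with regularity of the specialisation at a given point. Your ``moreover'' step would be fine if $\Phi$ existed with the stated interpolation property, but as written the whole argument is circular-free only because the paper derives $\Phi$ from the reciprocity data rather than the other way round.
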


  Note that the proofs of both halves of the theorem are intertwined: we \emph{use} the Euler system and regulator formula in order to derive the existence of the meromorphic Eichler--Shimura isomorphism.

  \cref{intro:ESandERL}, together with the results of \cite{leiloefflerzerbes18}, immediately imply \cref{intro:BKIwapplication} as long as $\Pi$ is not one of the finitely many ``bad'' specialisations. We conjecture that such bad specialisations do not actually exist, but we cannot rule it out with the presently available techniques\footnote{This would follow from work in progress by Birkbeck and Williams, aiming to prove an interpolation of $p$-adic Eichler--Shimura maps in higher Hida, or Coleman, families of Hilbert modular forms using perfectoid methods.}. Hence we carry out in $\S13$ a \emph{leading term argument}, adapting methods developed in \cite{LZ20-yoshida} for the standard (rather than Asai) $L$-functions of quadratic Hilbert modular forms, to show that one can recover a modified version of the Euler system which is related to the $L$-function even at ``bad'' specialisations, which suffices for the proof of \cref{intro:BKIwapplication}.

 \subsection*{Acknowledgements}

  Large parts of this paper were written while the authors were in residence at MSRI (now known as Simons Laufer Mathematical Institute) for the 2023 semester program ``Algebraic Cycles, L-Values, and Euler Systems''. We are grateful to MSRI for offering such a fantastic working environment.

\section{Conventions}
\label{ss:conventions}

 Throughout this paper $F$ denotes a real quadratic field, of discriminant $D$, and we fix an enumeration of the embeddings $F \into \RR$ as $\sigma_1, \sigma_2$.

 \subsection{Algebraic groups}

  As in \cite{leiloefflerzerbes18}, define $G=\Res_{F/\QQ}(\GL_2)$, and $H = \GL_2$, with $\iota: H \into G$ the natural embedding. We write $B_H$ and $B_G$ for the upper-triangular Borel subgroups. The Weyl group of $G$ is isomorphic to $(\ZZ/2\ZZ)\times(\ZZ/2\ZZ)$, with generators $w_1$ and $w_2$ (corresponding to the $\sigma_i$).

  Recall that if $V$ is an algebraic representation of $B_H$, then $V$ gives rise to a vector bundle on the Shimura variety $Y_H$ (for any sufficiently small level group), endowed with an action of Hecke correspondences. There are two possible normalisations for this functor, and we normalise so that the defining 2-dimensional representation of $H$ (restricted to $B_H$) maps to the relative de Rham \emph{cohomology} sheaf of the universal elliptic curve $\cE \to Y_H$ (rather than the relative homology, which is the other convention in use). There is an analogous construction for $G$ as long as we restrict to representations trivial on the norm-one subgroup $\{\stbt{x}{}{}{x} : N_{F/\QQ}(x) = 1\}$ of $Z_G$, cf.~\cite[\S 3.2c]{leiloefflerzerbes18}.

 \subsection{Algebraic representations and the Clebsch--Gordan map}\label{ss:ClebschGordan}

  For the group $G$, we shall always work with representations over fields containing $F$. Given two representations $V, V'$ of $\GL_2$, we write $V \boxtimes V'$ for the tensor product of $V, V'$, endowed with a $G$-action as the tensor product of the actions via  $\sigma_1$ on $V$ and via $\sigma_2$ on $V'$.

  Given integers $k_1, k_2\geq 0$, and $0\leq j\leq \min\{k_1, k_2\}$, let
  \[
   V_G \coloneqq \Sym^{k_1} W_G \boxtimes \Sym^{k_2} W_G\qquad \text{and}
   \qquad V_H \coloneqq \Sym^t W_H, \quad t = k_1 + k_2 - 2j,
  \]
  where $W_H$ and $W_G$ denote the defining 2-dimensional representations of $\GL_2 / \QQ$ and $\GL_2 / F$ respectively.

  The representation $V_H$ has a canonical basis $(v^a w^{t-a})_{0 \le a \le t}$, where $v, w$ are the two standard basis vectors of $W_H$; and is equipped with a decreasing $B_H$-stable weight filtration $\Fil^n V_H = \langle\{ v^a w^{t-a} : a \ge n\}\rangle$. Similarly, $V_G$ is equipped with a bi-filtration (a decreasing filtration indexed by $\ZZ^2$) arising from the weight vector filtrations on each factor.

  There is a non-zero morphism of $H$-representations (the \emph{Clebsch--Gordan map}), unique up to scalars,
  \[ \CG^{[k_1, k_2, j]}: V_H\to V_G\otimes \det{}^{-j}. \]
  See \eqref{eq:CGexplicit} below for explicit formulae. If we equip the one-dimensional representation $\det^{-j}$ with a filtration concentrated in degree $-j$, then this map respects the filtrations, and hence induces a map on the graded pieces.

\section{Setup: Hilbert modular forms}

\subsection{Notations}

Let $J \in\{G,H\}$. For a neat open subgroup $K$ of $J(\Af)$, write $Y_{J}(K)$ for the Shimura variety of $J$ of level $K$, $X_J(K)$ for a toroidal compactification, and $D_J = X_J(K) - Y_J(K)$ for the boundary divisor.

A \emph{weight} for $G$ will mean a tuple of integers $(k_1, k_2; w)$ with $w = k_1 = k_2 \bmod 2$. If $\mu$ is a weight, we have a locally-free sheaf $\omega^\mu$ on $X_{G, K} / F$, whose sections are Hilbert modular forms of weight $\mu$.

\begin{note}
 As an abstract sheaf $\omega_\mu$ does not depend on $w$; but the action of Hecke operators on it does. With our conventions, if $\mathfrak{q}$ is a prime of $F$ which is trivial in the narrow ray class group modulo $Z_G \cap K$, the action of $\stbt{\varpi_\mathfrak{q}}{}{}{\varpi_\mathfrak{q}}$ on $R\Gamma\left(X_{G, K} / F, \omega^{\mu}\right)$ is multiplication by $(\# \cO_F / \mathfrak{q})^w$.
\end{note}


\subsection{Automorphic representations of $G$}

 \begin{definition}\label{def:Pi}
  Let $\Pi$ be the unitary cuspidal automorphic representation of $G(\AA)$ generated by a holomorphic Hilbert modular newform of weight $(k_1 + 2, k_2 + 2)$, for some integers $k_1, k_2 \ge 0$ with $k_1 = k_2 \bmod 2$. We assume that $\Pi$ has level $\fN \trianglelefteqslant \cO_F$, so if $U_1(\fN)$ denotes the open compact subgroup $\{ g \in G\left(\Af\right) : g = \stbt{\star}{\star}{0}{1} \bmod \fN\}$, then the space of $U_1(\fN)$-invariants of $\Pif$ (the new subspace of $\Pif$) is one-dimensional.
 \end{definition}

 \begin{remark}
  We shall assume throughout this paper that $\fN$ does not divide $6\operatorname{disc}_{F/\QQ}$, so that $U_1(\fN)$ is \emph{sufficiently small} in the sense of \cite[Definition 2.2.1]{leiloefflerzerbes18}. The case of small levels can be dealt with in the usual fashion by introducing an auxiliary full level structure at some prime away from $p$; we leave the details to the interested reader.
 \end{remark}

 We define $a_{\mathfrak{n}}^\circ(\Pi)$, for each ideal $\mathfrak{n} \trianglelefteqslant \cO_F$, to be the Hecke eigenvalues of the new vector of $\Pi$, normalised in the analytic fashion, so that $|a_{\mathfrak{q}}^\circ(\Pi)| \le 2$ for each prime $\mathfrak{q} \nmid \fN$. For each such prime $\mathfrak{q}$ we let $\alpha_{\mathfrak{q}}^\circ$ and $\beta_{\mathfrak{q}}^\circ$ be the Satake parameters of $\Pi_\pp$ (so that $|\alpha_{\mathfrak{q}}^\circ| = |\beta_{\mathfrak{q}}^\circ| = 1$ and $a_{\mathfrak{q}}^\circ(\Pi) = \alpha_{\mathfrak{q}}^\circ + \beta_{\mathfrak{q}}^\circ$).

 By a rationality theorem due to Shimura \cite[Proposition 1.2]{shimura86}, if we choose $w$ such that $(k_1, k_2, w)$ is a weight, then there is a number field $L \subset \CC$ depending only on $\Pi$ which contains the quantities $N(\mathfrak{n})^{(w + 1)/2} a_\mathfrak{n}^\circ(\Pi)$ for all ideals $\mathfrak{n}$. We can (and do) assume that $L$ also contains the images of the embeddings $F \into \RR$, which is automatic if $k_1 \ne k_2$.

 For each integer $n \ge 1$, we write
 \[ a_n(\Pi) = n^{(k_1 + k_2 + 2)/2} a_{n \cO_F}^\circ(\Pi) \in \cO_L. \]


\subsection{The Asai $L$-function}

\begin{definition}
 \label{def:AsaiL}
 We define the \emph{imprimitive Asai $L$-series} of $\Pi$ to be the function
 \[ L^{\imp}_{\As}(\Pi, s) = L_{(N)}(\chi, 2s - k_1 - k_2 - 2) \sum_{n \in \ZZ_{\ge 1}} \frac{a_n(\Pi)}{n^{s}}, \]
 where $\chi$ is the restriction to $\AA_{\QQ}^\times$ of the central character of $\Pi$, and $(N)$ denotes omitting the Euler factors at the primes dividing $N = \fN \cap \ZZ$.
\end{definition}

\begin{note}\label{note:critval} \
 \begin{enumerate}[(a)]
  \item The imprimitive $L$-function $L^{\imp}_{\As}(\Pi, s)$ may differ by finitely many Euler factors from the ``true'' Asai $L$-series $L_{\As}(\Pi, s)$ whose local factors are defined using the local Langlands correspondence.
  \item Our normalisations are slightly different here from \cite{grossiloefflerzerbesLfunct}, in which we used the analytic normalisation (with the centre at $s = \tfrac{1}{2}$) which is more convenient for zeta-integral computations; the $L^{\imp}_{\As}(\Pi, s) $ here is $L^{\imp}_{\As}(\Pi, s - \tfrac{k_1 + k_2 + 2}{2})$ in the notation of \emph{op.cit.}. On the other hand, our normalisations \emph{are} compatible with \cite[\S 5.1]{leiloefflerzerbes18}, and are also consistent with the conventional normalisation for  Rankin--Selberg $L$-functions in \cite{KLZ17} and elsewhere (if we interpret Rankin--Selberg $L$-functions as ``Asai $L$-functions for the degenerate quadratic field $\QQ \times \QQ$'').
  \item The critical values of $L_{\As}(\Pi, s)$ are the integers satisfying $k_2 + 2 \leq s \leq k_1 + 1$ if $k_1 > k_2$, and similarly if $k_1 < k_2$. (There are no critical values if $k_1 = k_2$.)\qedhere
 \end{enumerate}
\end{note}


\subsection{Hecke eigensystems}

\begin{definition}
 Let $S$ be a finite set of primes containing all those dividing $\operatorname{disc}_{F/\QQ}\cdot \operatorname{Nm}_{F/\QQ}(\fN)$; and write $\mathbb{T}^S$ for the abstract Hecke algebra $L[G(\Af^S) \sslash G(\hat{\ZZ}^S)]= L[\{T_\mathfrak{q}, S_{\mathfrak{q}}, S_{\mathfrak{q}}^{-1} : \mathfrak{q} \notin S_F\}]$ (where $S_F$ is the set of primes of $F$ above $S$).
\end{definition}

\begin{note}
 If we choose $\Pi$ of weight $(k_1, k_2)$ as above, and also a $w$ such that $(k_1, k_2; w)$ is a weight, then $\Pi_\f^S \otimes \|\cdot\|^{-{w/2}}$ is definable over $L$ as a $G(\Af^S)$-module, and hence the action of the Hecke algebra on its $G(\ZZ^S)$-invariants gives a ring homomorphism
 \[ \lambda_\Pi^S:\mathbb{T}^S\to L, \qquad
 T_{\mathfrak{q}} \mapsto \operatorname{Nm}(\mathfrak{q})^{(w + 1)/2} a_{\mathfrak{q}}^{\circ}(\Pi), \qquad
 S_{\mathfrak{q}} \mapsto \operatorname{Nm}(\mathfrak{q})^w \chi_{\Pi}(\varpi_{\mathfrak{q}}),
 \]
 whose kernel we denote $I_\Pi^S$.
\end{note}

\begin{assumption}\label{ass:psplit}
 Let $p \notin S$ be a prime, and assume that $p$ is split in $F$, say $\langle p\rangle=\pp_1 \pp_2$.
\end{assumption}

 Fix a prime $\mathfrak{P}$ of our coefficient field $L$ above $p$ such that $\pp_i$ is the preimage under $\sigma_i$ of the prime $\mathfrak{P}$. Let $\varpi_{\pp_i}$ be a uniformizer of  $\pp_i$.

  Define the following level structures at $p$:

\begin{notation}\
 \begin{enumerate}
  \item For $=1,2$ let
  \[ \Iw(\pp_i)= \{g \in G(\Zp) : g = \stbt * * 0 * \bmod{\pp_i}\}, \]
   and let $U_{\pp_i}$ and $U_{\pp_i}'$ be the $\Iw(\pp_i)$-double cosets of $\stbt{\varpi_{\pp_i}}{}{}{1}$ and  $\stbt{1}{}{}{\varpi_{\pp_i}}$.
  \item Let
   \begin{align*}
    \Iw(p)& =  \Iw(\pp_1)\cap  \Iw(\pp_2)\\
      & = \{g \in G(\Zp) : g = \stbt * * 0 * \bmod p\}
    \end{align*}
    \end{enumerate}
\end{notation}

\begin{definition}
 By a \emph{$p$-stabilisation} of $\Pi$, we mean a choice of one of the Satake parameters (which we denote WLOG by $\alpha_i^\circ$) at each of the primes $\pp_i \mid p$.
\end{definition}

\begin{remark}
 The $p$-stabilisation defines choices of eigenvalues for the operators $U_{\pp_i}$, or their transposes $U'_{\pp_i}$, at level $\Iw(p)$. We normalise these operators in such a way that their eigenvalues are
 \[ \alpha_i \coloneqq p^{(k_i + 1)/2} \alpha_i^{\circ}\qquad
 \text{(\emph{not} } p^{(w + 1)/2}\alpha_i^{\circ}).
 \]
 This normalisation is less canonical (it depends on the choice of ideal $\fP \mid p$ of $L$), but will work better for $p$-adic interpolation; in particular, the valuation of $\alpha_i$ lies in the range $[0, k_i + 1]$.
\end{remark}

\begin{definition}
 For $i=1, 2$, we say that $\Pi$ is \textbf{ordinary at $\pp_i$} if $v_p(\alpha_i)=0$; and $\Pi$ is \textbf{ordinary at $p$} if it is ordinary at both $\pp_1$ and $\pp_2$.
\end{definition}

\begin{definition}\
 \begin{enumerate}
 \item Let  $\mathbb{T}^S_{\pp_1, w_1}$  denote the product of $\mathbb{T}^{S\cup\{ p\}}$ and the subalgebra of  $L[G(\Qp) \sslash \Iw(\pp_1)]$ generated by $U_{\pp_1}'$.
 \item Similarly, let $\mathbb{T}^S_{\Iw, w_1}$denote the product of $\mathbb{T}^{S\cup\{ p\}}$ and the subalgebra of $L[G(\Qp) \sslash \Iw(p)]$
 generated by $U_{\pp_1}'$ and $U_{\pp_2}$ [sic; the roles of the primes are not symmetric].
 \end{enumerate}
\end{definition}

\begin{definition}\
 \begin{enumerate}
     \item The choice of a $\pp_1$-stabilisation defines a character $\lambda^S_{\Pi, \pp_1, w_1}$ of $\mathbb{T}^S_{\pp_1, w_1}$ sending $U_{\pp_1}'$ to $\alpha_1$, and we let $I_{\Pi, \pp_1, w_1}^S$ be its kernel.
     \item Similarly, the choice of a $p$-stabilisation defines a character $\lambda^S_{\Pi, \Iw, w_1}$ of $\mathbb{T}^S_{\Iw, w_1}$ (sending $U_{\pp_1}'$ to $\alpha_1$ and $U_{\pp_2}$ to $\alpha_2$), and we let $I_{\Pi, \Iw, w_1}^S$ be its kernel.
    \end{enumerate}
\end{definition}


\subsection{Eigensystems in coherent cohomology}

\begin{notation}
 For $\star\in\{\sph, \pp_1, \Iw\}$, write $Y_{G, *}$ for the Hilbert modular variety $Y_G\left( U_1(\fN)^{(p)} K_{p, \star}\right)$, where
 \[
 K_{p, \star}=
 \begin{cases}
  G(\Zp) &\quad \text{when $\star=\sph$}\\
  \Iw(\pp_1)\subset G(\Zp)&\quad \text{when $\star=\Iw(\pp_1)$}\\
  \Iw(p)\subset G(\Zp)&\quad \text{when $\star=\Iw$}
 \end{cases}
 \]
\end{notation}

 Then $Y_{G, \sph}$, $Y_{G, \pp_1}$ and $Y_{G, \Iw}$ are smooth $\QQ$-varieties. Let $X_{G, \star}$ be a smooth projective toroidal compactification of $Y_{G, \star}$.

\begin{proposition}(c.f. \cite[\S 6.2]{grossiloefflerzerbesLfunct})
 The space\footnote{The label $w_1$ refers to the first generator of the Weyl group $(\ZZ / 2) \times (\ZZ / 2)$; the identity element would correspond to holomorphic Hilbert modular forms.}
 \[
 \cS_{w_1}(\Pi) \coloneqq H^1\left(X_{G, \sph} / L, \omega^{(-k_1,k_2+2; w)}(-D_G)\right)[I_\Pi^S]
 \]
 is $1$-dimensional over $L$, and is independent of the choice of $S$ (and independent of the toroidal boundary data up to a canonical isomorphism).\qed
\end{proposition}

\begin{definition}\label{def:etabasic}
 Let $\eta$ be a basis of the space $\cS_{w_1}(\Pi)$ over $L$.
\end{definition}

\begin{remark}\label{rem:occult}
 Over $\CC$ this space has a canonical basis given by the comparison between coherent cohomology and automorphic forms. The ratio between this basis and $\eta$, uniquely defined as an element of $\CC^\times / L^\times$,  is Harris' \emph{occult period} $\Omega_\infty(\Pi)$ for $\Pi$. See \cite[\S 6.2]{grossiloefflerzerbesLfunct}.
\end{remark}

\begin{definition}
 Similarly, we define
 \begin{align*}
  \cS_{w_1, \Iw(\pp_1)}(\Pi) &\coloneqq H^1\left(X_{G, \Iw(\pp_1)}/L, \omega^{(-k_1,k_2+2; w)}(-D_G)\right)[I_{\Pi, \pp_1, w_1}^S], \\
  \cS_{w_1, \Iw(p)}(\Pi) &\coloneqq H^1\left(X_{G, \Iw}/L, \omega^{(-k_1,k_2+2; w)}(-D_G)\right)[I_{\Pi, \Iw(p), w_1}^S]
 \end{align*}
\end{definition}
%
%



\section{Classes in coherent cohomology}\label{ss:coherent}

\subsection{Coherent cohomology of the 1-ordinary locus}

Let $\YY_{G, \sph}$ denote the natural $\Zp$-model of $Y_{G, \sph}$, and $Y_{G, \sph,0}$ its special fibre, which is a smooth $\mathbb{F}_p$-variety; and similarly for the compactification $X_{G, \sph, 0}$. This special fibre has two commuting endomorphisms, the \textbf{partial Frobenii} $\varphi_1$ and $\varphi_2$ at the primes $\pp_i$, whose composite is the Frobenius $\varphi$; more precisely, $\varphi_i$ corresponds to sending a Hilbert--Blumenthal abelian surface $A$ to the quotient $A / \left( \ker(\varphi_A) \cap A[\pp_i]\right)$. We refer to \cite{tianxiao16} or \cite{nekovar-semisimplicity} for detailed accounts of this construction.

\begin{notation} \
 \begin{itemize}
  \item For $i=1, 2$, denote by $X^{(i-\ss)}_{G, \sph,0} \subset X_{G, \sph,0}$ the $\pp_i$-supersingular locus (the vanishing locus of the partial Hasse invariant, as constructed in \cite[\S 3.2]{tianxiao16}).
  \item Let $X^{(i-\ord)}_{G, \sph,0}$ be the complement of $X^{(i-\ss)}_{G, \sph,0}$, and $X^{\ord}_{G, \sph,0}=X^{(1-\ord)}_{G, \sph,0} \cap X^{(2-\ord)}_{G, \sph,0}$.
 \end{itemize}
 We write $Y^{(i-\ord)}_{G, \sph, 0}$ etc for the intersection of these subvarieties with $Y_{G, \sph,0} \subset X_{G, \sph,0}$.
\end{notation}

The following results on the geometry of the supersingular loci are well-known (see e.g.~\cite{tianxiao16}):

\begin{lemma}
 For $i=1, 2$, $X^{(i-\ss)}_{G, \sph,0}$ is a smooth codimension 1 closed subscheme of $X_{G, \sph, 0}$, disjoint from the toroidal boundary; and $X^{(1-\ss)}_{G, \sph,0} \cap X^{(2-\ss)}_{G, \sph,0}$ is a smooth closed subvariety of codimension $2$ (i.e.~a finite disjoint union of points).
\end{lemma}

\begin{remark}
 The preimage of either $X^{(1-\ord)}_{G, \sph,0}$ or $X^{(2-\ord)}_{G, \sph,0}$ under the finite map $\iota: X_{H, \sph,0} \to X_{G, \sph,0}$ is the ordinary locus $X_{H, \sph,0}^{\ord}$.
\end{remark}

%
%
%


We write $\cX_{G, \sph}$ for the dagger space associated to $X_{G, \sph} /\QQ_p$; and we denote the tubes in $\cX_{G, \sph}$ of the various subvarieties of $X_{G, \sph,0}$ considered above by the corresponding superscripts on $\cX_{G, \sph}$, so $\cX_{G, \sph}^{(i-\ord)}$ is the tube of $X_{G, \sph,0}^{(i-\ord)}$ in $\cX_{G, \sph}$ etc.

\begin{theorem}
 \label{thm:nu1ord}
 Suppose that $\Pi$ is $\pp_1$-ordinary, or more generally that $v_p(\alpha_1) < k_1 + 1$ and $\alpha_1 \ne \beta_1$. Then there exists a unique class
 \[ \eta^{(1-\ord)} \in H^1_c\left(\cX_{G, \sph}^{(1-\ord)}, \omega^{(-k_1, k_2 + 2)}(-D_G)\right)\]
 satisfying the following properties:
 \begin{enumerate}
  \item it is a $\varphi_1$-eigenvector with eigenvalue $\alpha_1$;
  \item its image in $H^1(\cX_{G, \sph} / E, \omega^{(-k_1, k_2 + 2)}(-D_G))$ is $\eta$.
 \end{enumerate}
\end{theorem}

\begin{proof}
 If we assume $v_p(\alpha_1) < k_1$ and $\alpha_1 \ne \beta_1$, then this is Proposition 5.2 of \cite{LZ-plectic}. However, the restriction $v_p(\alpha_1) < k_1$ in \emph{op.cit.} was imposed in order to apply the classicity theorems in the then-current version of \cite{boxerpilloni22}; and subsequent revisions to \cite{boxerpilloni22} have shown that the classicity theorems hold under the slightly weaker assumption of ``small slope'' rather than ``strictly small slope'' in the terminology of \emph{op.cit.}, which corresponds to $v_p(\alpha_1) < k_1 + 1$ in the present setting.
\end{proof}


 \subsection{Comparison with cohomology at $\Iw(\pp_1)$-level}\label{ss:compp1}

  We now compare the class $\eta^{(1-\ord)}$ with algebraic coherent classes at level $\Iw(\pp_1)$.

  \begin{note}
   The special fibre $X_{G, \Iw(\pp_1), 0}$ has a stratification with three strata,
   \[ X_{G, \Iw(\pp_1), 0} = X_{G, \Iw(\pp_1), 0}^{(1-m)} \cup X_{G, \Iw(\pp_1), 0}^{(1-\et)} \cup X_{G, \Iw(\pp_1), 0}^{(1-\alpha)}, \]
   on which the $\pp_1$-level structure is multiplicative, \'etale, or $\alpha_p$ respectively. We obtain a corresponding decomposition of the dagger space $\cX_{G, \Iw(\pp_1)}$, and the natural degeneracy map $\pi_{\pp_1} : \cX_{G, \Iw(\pp_1)} \to \cX_{G, \sph}$ restricts to an isomorphism\footnote{The inverse map is given by the ``$\pp_1$-canonical subgroup'' construction, see \cite[3.11]{tianxiao16}.} of dagger spaces
   \[ \pi_{\pp_1}:\, \cX_{G, \Iw(\pp_1)}^{(1-m)} \xrightarrow{\ \cong\ } \cX_{G, \sph}^{(1-\ord)}.\qedhere\]
  \end{note}

  By \cite[Lemma 5.7]{LZ-plectic}, our Frobenius lift $\varphi_1$ on $H^1_c(\cX_{G, \sph}^{(1-\ord)})$ corresponds to the Hecke operator $U_{\pp_1}'$ acting on $\cX_{G, \Iw(\pp_1)}^{(1-m)}$. From the functoriality of push-forward maps, we have the following compatibility:

  \begin{proposition}\label{prop:compnus}
   Write $\eta^{(1-m)}_{\Iw(\pp_1)}$ for the image of $\eta^{(1-\ord)}$ under the inverse of the isomorphism $ \pi_{\pp_1,*}$. The image $\eta_{\Iw(\pp_1)}$ of $\eta_{\Iw(\pp_1)}^{(1-m)}$ in $H^1(\cX_{G, \Iw(\pp_1)}$ is the unique class which lies in the $\Pif$-eigenspace away from $p$, is a $(U_{\pp_1}' = \alpha_1)$-eigenvector, and maps to $\eta$ under the trace map $\pi_{\pp_1,*}$.\qed
  \end{proposition}

  See also \cref{fig:coh}.

  \begin{figure}[h]
   \caption{Relations between the coherent classes at spherical and $\pp_1$-Iwahori level (all cohomology groups with coefficients in $\omega_G^{(-k_1, k_2 + 2)}(-D_G)$)}
   \label{fig:coh}
   \[
    \begin{tikzcd}[column sep = large, contains/.style = {draw=none,"\in" description,sloped}]
    \eta^{(1-m)}_{\Iw(\pp_1)} \rar[mapsto] \ar[mapsto, ddd, bend right=90, looseness=1.3]
    \dar[contains]
    & \eta_{\Iw(\pp_1)} \dar[contains] \ar[mapsto, ddd, bend left=90, looseness=1.3]
    \\
    H^1_c\left(\cX_{G, \Iw(\pp_1)}^{(1-m)}\right)
    \rar["\text{ext-by-0}"]
    \dar["\pi_{\pp_1, *}" left,  "\cong" right]
    & H^1\left(\cX_{G, \Iw(\pp_1)}\right)
    \dar["\pi_{\pp_1,*}" left] \\[15pt]
    H^1_c\left(\cX_{G, \sph}^{(1-\ord)}\right)
    \rar["\text{ext-by-0}"]
    &
    H^1\left(X_{G, \sph}\right) \\
    \eta^{(1-\ord)} \uar[contains] \rar[mapsto]
    &\eta \uar[contains]
    \end{tikzcd}
   \]
  \end{figure}


 \subsection{Push-forward maps in coherent cohomology}\label{ss:pfwd}

  Let $Y_{H, \sph, 0}$ be the special fibre of $Y_{H, \sph} / \Zp$, and write $Y_{H, \sph, 0}^{\ord}$ for its ordinary locus. Similarly, denote by $Y_{H, \Iw(p), 0}$ be the special fibre of $Y_{H, \Iw(p)} / \Zp$, and write $Y_{H, \Iw(p), 0}^m$ for its multiplicative locus. We denote the compactifications of these by $X_{(\dots)}$, and the associated dagger space tubes by calligraphic letters, as above.

  Then the natural degeneracy map of dagger spaces restricts to an isomorphism
  \[ \pi_p:  \cX_{H, \Iw(p)}^m \xrightarrow{\ \cong\ } \cX_{H, \sph}^{\ord}.\]

  \begin{note}\label{note:iota}
    The embedding $\iota:\, H\to G$ gives rise to finite maps
    \[
     \iota:\, \cX_{H, \sph} \to \cX_{G, \sph}
     \quad\text{and}\quad
     \iota^{(p)}:\, \cX_{H, \Iw(p)} \to \cX_{G, \Iw(\pp_1)},
    \]
    and it is easy to check that we have
    \[
     \left(\iota^{(p)}\right)^{-1}\left(\cX_G(\pp_1)^m\right)=\cX_H(p)^m
     \qquad \text{and}\qquad
     \iota^{-1}\left(\cX_{G, \sph}^{(1-\ord)}\right)=\cX_{H, \sph}^{\ord}.
    \]
    The maps $\iota$ and $\iota^{(p)}$ hence give rise to pushforward maps
    \begin{align*}
    \iota^{(p)}_*:\, H^0\left(\cX_{H, \Iw(p)}^m, \omega_H^{k_1-k_2}\right)& \to H^1\left(\cX_{G, \Iw(\pp_1)}^{(1-m)}, \omega_G^{(k_1 + 2, -k_2)}\right), \\
    \iota_*:\, H^0\left(\cX_{H, \sph}^{\ord}, \omega_H^{k_1-k_2}\right)& \to H^1\left(\cX_{G, \sph}^{(1-\ord)}, \omega_G^{(k_1 + 2, -k_2)}\right).\qedhere
   \end{align*}
  \end{note}

 The following result will be useful in the evaluation of the syntomic regulator:

  \begin{lemma}\label{lem:cartesian}
   The following diagram is cartesian:
   \[
    \begin{tikzcd}
     X_{H, \Iw(p)}
     \rar["\iota^{(p)}"] \dar["\pi_p"]&
     X_{G, \Iw(\pp_1)} \dar["\pi_{\mathfrak{p}_1}"] \\
     X_{H, \sph}
     \rar["\iota"] &
     X_{G, \sph}
    \end{tikzcd}
   \]
   so $\pi_{\mathfrak{p}_1}^*\circ \iota_*=\iota^{(p)}_*\circ\pi^*_p$.\qed
 \end{lemma}


 \subsection{Construction of the $p$-adic $L$-function}\label{ssect:1parampL}

  We briefly recall the construction of $p$-adic Asai $L$-functions developed in \cite{grossiloefflerzerbesLfunct}. The class $\eta_{\Iw(\pp_1)}^{(1-m)}$ lies in the compactly-supported cohomology of the \emph{overconvergent} dagger space $\cX_{G, \Iw(\pp_1)}^{(1-m)}$; so by duality it defines a linear functional on $H^1\left(\cX_{G, \Iw(\pp_1)}^{(1-m)}, \omega_G^{(k_1 + 2, -k_2)}\right)$.

  However, if we impose the stronger assumption that $\Pi$ is $\pp_1$-ordinary, then we can lift $\eta_{\Iw(\pp_1)}^{(1-m)}$ to the cohomology of the \emph{classical} rigid space $\sX_{G, \Iw(\pp_1)}^{(1-m)}$ underlying $\cX_{G, \Iw(\pp_1)}^{(1-m)}$; so the corresponding linear functional factors through the natural ``forgetting overconvergence'' map
  \[
   H^1\left(\cX_{G, \Iw(\pp_1)}^{(1-m)}, \omega_G^{(k_1 + 2, -k_2)}\right) \to H^1\left(\sX_{G, \Iw(\pp_1)}^{(1-m)}, \omega_G^{(k_1 + 2, -k_2)}\right).
  \]
  This is useful to us since it allows us to pair $\eta_{\Iw(\pp_1)}^{(1-m)}$ with pushforwards of families of $\GL_2$ Eisenstein series, which are not overconvergent.

  We write $\cW$ for the formal scheme $\operatorname{Spf} \cO_E[[\Zp^\times]]$ (``weight space'') parametrising continuous characters of $\Zp^\times$, so elements of $\cO_E[[\Zp^\times]]$ are interpreted as functions on $\cW$.

  \begin{definition}\label{def:AsaipL}
   We define $L_{p, \As}^{\imp}(\Pi) \in \cO_E[[\Zp^\times]][1/p]$ by
   \[
    L_{p, \As}^{\imp}(\Pi, \sigma) \coloneqq
    (1 - \tfrac{\beta_1}{p\alpha_1}) (\sqrt{D})^{-\sigma} (-1)^{\sigma + (k_1 + k_2 + 2)/2}
    \left\langle \eta_{\Iw(\pp_1)}^{(1-m)},
    \iota_{\coh, *}^{{(p)}}\left(\cE_{k_1-k_2}(\sigma - \tfrac{k_1 + k_2+2}{2})\right)\right\rangle, \]
   where $\cE_{k_1-k_2}$ is a family of $p$-depleted Eisenstein series over $\cW$, of constant weight $k_1 - k_2$, defined in \cite[\S 7.2]{grossiloefflerzerbesLfunct}, with the Schwartz function $\Phi^{(p)}$ chosen as in \S 7.3.2 of \emph{op.cit.}.
  \end{definition}

  \begin{remark}\label{rem:competas}
   The factor $ (1 - \tfrac{\beta_1}{p\alpha_1})$ (which is non-zero, since $\alpha_1 / \beta_1$ has complex absolute value 1) is included to correct for a discrepancy between the cohomology class $\eta_{\Iw(\pp_1)}$, which is a preimage of $\eta$ under $\pi_{\pp_1,*}$, and the class $\breve{\nu}_{\Pi, \alpha}$ used in \cite{grossiloefflerzerbesLfunct}, which is the image of $\eta$ under $(1 - \tfrac{\beta_1}{U_{\pp_1}'})\circ\pi_{\pp_1}^*$. An elementary computation shows that these classes are related by
   \[
    \breve{\nu}_{\Pi, \alpha} = p (1 - \tfrac{\beta_1}{p\alpha_1}) \cdot \eta_{\Iw(\pp_1)},
   \]
   so our $p$-adic $L$-function coincides with that of \emph{op.cit.} up to the shift by $\tfrac{k_1 + k_2+2}{2}$. In particular for integers $s$ with $k_2 + 2 \le s \le k_1 + 1$, the $p$-adic $L$-value $L_{p, \As}^{\imp}(\Pi, s)$ is an explicit multiple of the complex $L$-value $L_{\As}^{\imp}(\Pi, s)$ with the conventions of \cref{def:AsaiL}.
  \end{remark}


\section{The Asai motive and its realisations}

 \subsection{De Rham and coherent cohomology groups}

  As above $Y_{G, \sph}$ denotes the Shimura variety for $G$ of level $U_1(\fN)$, and $X_{G, \sph}$ a toroidal compactification. For this section we drop the ``$\sph$'' subscript, since only the spherical-level varieties will appear.

  \begin{definition}
   Let $\cV_G$ denote the vector bundle with connection on $Y_G$ corresponding to the algebraic representation\footnote{More precisely, we need to twist by an appropriate character to make the action of norm-one units trivial, but the resulting vector bundle is independent of the choice up to a canonical isomorphism.} $V_G$; and set
   \[ D_L^{\As}(\Pi) = H^2_{\dR, c}\Big(Y_G / L, \cV_G\Big)[\Pif], \]
   which is 4-dimensional over $L$. Here, $[\Pif]$ denotes the $\Pif$-isotypical projection.
  \end{definition}

  We can compute de Rham cohomology using the logarithmic de Rham complex $\cV_G \otimes \Omega^\bullet_{X_G / L}\langle D_G \rangle$, where $D_G = X_G - Y_G$ is the boundary divisor; and we can compute compactly-supported de Rham cohomology using the ``minus-log'' complex $\cV_G \otimes \Omega^\bullet_{X_G}\langle -D_G \rangle$, where $\Omega^\bullet\langle -D_G\rangle \coloneqq \Omega^\bullet\langle D_G \rangle(-D_G)$. We denote the corresponding cohomologies by
  \begin{align*}
   R\Gamma_{\dR}\left(X_G, \cV_G\langle D \rangle\right) &\coloneqq R\Gamma\left(X_G, \cV \otimes \Omega^\bullet_{X_G}\langle D\rangle\right) \cong R\Gamma_{\dR}(Y_G, \cV_G), \\
   R\Gamma_{\dR}(X_G, \cV_G\langle -D_G \rangle) &\coloneqq R\Gamma(X_G, \cV \otimes \Omega^\bullet_{X_G}\langle -D_G \rangle) \cong R\Gamma_{\dR, c}(Y_G, \cV_G).
  \end{align*}

  \begin{definition}
   Define the  \emph{dual BGG complex}
   \[ \BGG^\bullet = \left[ \omega^{(-k_1, -k_2)} \longrightarrow
   \omega^{(-k_1, k_2 + 2)} \oplus \omega^{(k_1 + 2, -k_2)} \longrightarrow
   \omega^{(k_1 + 2, k_2 + 2)} \right], \]
   and write  $\BGG^\bullet(-D_G)$ for its compactly supported analogue.
  \end{definition}

  The following result is well-known:

  \begin{proposition}
   The dual BGG complex (resp. the compactly supported dual BGG complex) is quasi-isomorphic to the logarithmic de Rham complexes $V \otimes \Omega^\bullet\langle D \rangle$ (resp. to $V \otimes \Omega^\bullet \langle -D_G \rangle$).
  \end{proposition}

   The BGG complex is equipped with a natural decreasing $\ZZ^2$-filtration $\Fil^{\bullet\bullet}$, which gives rise to a $\ZZ^2$-filtration on $D_L^{\As}(\Pi)$; this construction is studied in detail in \cite{LZ-plectic}. We can (and do) normalise so that the nontrivial graded pieces are in bidegrees $\{(0, 0), (k_1 + 1, 0), (0, k_2 + 1), (k_1 + 1, k_2 + 1)\}$.
   The induced single filtration, with graded pieces in degrees $\{0, k_1 + 1, k_2 + 1, k_1 + k_2 + 2\}$, is the Hodge filtration.

  \begin{notation}
   We write $\Fil_1^+ D_L^{\As}(\Pi) \coloneqq \Fil^{(k_1 + 1, 0)}D_L^{\As}(\Pi)$ and
   $\Fil_2^+ D_L^{\As}(\Pi) \coloneqq \Fil^{(0, k_2 + 1)}D_L^{\As}(\Pi)$. Then each of these subspaces is 2-dimensional; their intersection is $\Fil^{k_1 + k_2 + 2}$, and their sum is $\Fil^1$; and we have a canonical isomorphism
   \[ \frac{\Fil_2^+ D_L^{\As}(\Pi)}{\Fil_1^+ \cap \Fil_2^+} \cong \cS_{w_1}(\Pi).\]
  \end{notation}


 \subsection{The Asai Galois representation}

  We now fix (for the remainder of this paper) a prime $p$, a finite extension $E / \Qp$, and an embedding $L \into E$, where $L \subset \CC$ is the finite extension of $\QQ$ generated by the emeddings of $F$ and the normalised Hecke eigenvalues of $\Pi$. The field $E$ will be the coefficient field for our $p$-adic $L$-functions and Galois representations.

  \begin{definition} \
   \begin{enumerate}[(i)]
    \item Let $V_p^{\As}(\Pi)$ denote the four-dimensional Asai Galois representation associated to $\Pi$ as in \cite[Definition 4.4.2]{leiloefflerzerbes18}, defined as the $\Pif$-eigenspace in the $p$-adic \'etale cohomology of the Hilbert modular variety $Y_G \otimes \QQbar$ (with coefficients in the \'etale local system of $E$-vector spaces determined by $(k_1, k_2)$).
    \item Let $D_p^{\As}(\Pi) =  E \otimes_L D_L^{\As}(\Pi)$, so that there is a canonical comparison isomorphism (compatible with the Hodge filtration)
    \[  \DdR\left(\Qp, V_p^{\As}(\Pi) \right) \cong D_p^{\As}(\Pi).\]
   \end{enumerate}
  \end{definition}

  \begin{remark}
   More precisely, the representation $V_p^{\As}(\Pi)$ is $M_{L_{\fP}}(\cF) \otimes_{L_{\fP}} E$ in the notation of \emph{op.cit.}, where $\cF$ is the normalised newform generating $\Pi$ and $\fP \mid p$ the prime of $L$ determined by the embedding into $E$. By results of Brylinski--Labesse and Nekov\'a\v{r} recalled in \emph{op.cit.}, the Galois representation $V_p^{\As}(\Pi)$ can be characterised, up to isomorphism, as the unique semisimple Galois representation whose $L$-series is $L_{\As}\left(\Pi\right)$. However, since we want to consider Euler system classes for $V_p^{\As}(\Pi)$, it is important to fix not only an abstract isomorphism class of Galois representations but a specific realisation of this isomorphism class in \'etale cohomology.
  \end{remark}

  \begin{corollary}
   The representation $V_p^{\As}(\Pi)$ is crystalline at $p$, so $D_p^{\As}(\Pi)$ is naturally a filtered $\varphi$-module; and the eigenvalues of $\varphi$ on this module are the pairwise products
   \( \{ \alpha_1 \alpha_2, \dots, \beta_1 \beta_2\}. \)\qed
  \end{corollary}

%

  \begin{lemma}
   Let $0 \le j \le \min(k_1, k_2)$ be an integer. Then $p^j$ is not an eigenvalue of $\varphi$ on $D_p^{\As}(\Pi)$. Moreover, if $p^{(1+j)}$ is an eigenvalue of $\varphi$, then we must have $k_1 = k_2 = j$.
  \end{lemma}

  \begin{proof}
   This follows from the fact that the Satake parameters $\alpha_{\pp_i}^\circ$, $\beta_{\pp_i}^\circ$ have complex absolute value 1.
  \end{proof}

  This implies that for the Galois representation $V_p^{\As}(\Pi)^*(-j)$, the Bloch--Kato subspaces $H^1_{\mathrm{e}}(\Qp, -)$ and $H^1_\f(\Qp, -)$ agree; and these are also equal to $H^1_{\mathrm{g}}$, except possibly in the boundary case $k_1 = k_2 = j$, in which case $H^1_{\mathrm{g}}$ can be strictly larger. Moreover, the inverse of the Bloch--Kato exponential map for $V_p^{\As}(\Pi)^*(-j)$ is an isomorphism
  \begin{equation}
   \label{eq:expiso}
   \log : H^1_\f\left(\Qp, V_p^{\As}(\Pi)^*(-j)\right) \xrightarrow{\ \cong\ } \left( \Fil^{1+j} D_p^{\As}(\Pi) \right)^* = \left( \Fil^1 D_p^{\As}(\Pi) \right)^*,
  \end{equation}
  with both sides 3-dimensional over $E$.

\subsection{Partial Frobenii}

We may identify $D_p^{\As}(\Pi)$, as a Frobenius module (forgetting the filtration), with the rigid cohomology of the special fibre of $Y_G$ at $p$.

The operators $\varphi_i$ induce commuting linear operators on $D_p^{\As}(\Pi)$, with $\varphi = \varphi_1 \varphi_2$; and it follows from the ``partial Eichler--Shimura'' comparison result proved in \cite{nekovar-semisimplicity} that for each $i$ we have
\[ (\varphi_i - \alpha_i)(\varphi_i - \beta_i) = 0 \quad \text{on } D_p^{\As}(\Pi).\]
One checks easily that the partial Frobenii satisfy $\lambda(\varphi_i x, \varphi_i y) = p^{k_i + 1} \lambda(x, y)$, where $\lambda$ is the Poincar\'e duality form. This identifies the $\beta_i$-generalised eigenspace with the dual of that for $\alpha_i$. Hence, if $\alpha_i \ne \beta_i$, the $\varphi_i = \alpha_i$ and $\varphi_i = \beta_i$ eigenspaces are both 2-dimensional, and each is isotropic with respect to $\lambda$.

\begin{remark}
 We are using a slightly different normalisation of the partial Frobenii here from \cite{LZ-plectic}: the $\varphi_i$ here is $p^{-t_i} \varphi_i$ in the notation of \emph{op.cit.}, where $(t_1, t_2)$ is an auxiliary choice of integers such that $w = k_i + 2t_i$ is independent of $i$. This reflects the fact that the $\alpha_i$ of \emph{op.cit.} is $p^{(w + 1)/2} \alpha_{\pp_i}^\circ$, while $\alpha_i$ here is $p^{(k_i + 1)/2} \alpha_{\pp_i}^\circ$. The present normalisation is more convenient for comparison with higher Hida theory, since it matches the minimal integral normalisation of the $U_{\pp_i}'$ operators.
\end{remark}


 \subsection{Eichler--Shimura isomorphism}

  \begin{proposition}\label{prop:filiso}
   Suppose that $\alpha_1 \ne \beta_1$, and we have the strict inequality $v_p(\alpha_1) < k_1 + 1$. Then the intersection $\Fil_2^+ D_p^{\As}(\Pi) \cap D_p^{\As}(\Pi)^{(\varphi_1 = \alpha_1)}$ is 1-dimensional, and maps isomorphically to
  \[ \frac{\Fil_2^+ D_p^{\As}(\Pi)}{\Fil_1^+ \cap \Fil_2^+} \cong \cS_{w_1}(\Pi) \otimes_L E. \]
  \end{proposition}

  \begin{proof}
   This is a special case of the first main theorem of \cite{LZ-plectic}, which states that (over totally-real fields of any degree) the $\ZZ^d$-indexed filtration of $D_p^{\As}(\Pi)$ has a canonical splitting given by intersecting with partial Frobenius eigenspaces. (As with \cref{thm:nu1ord} above, the result is proved in \cite{LZ-plectic} assuming $v_p(\alpha_1) < k_1$, but this can be extended to $v_p(\alpha_1) < k_1 + 1$.)
  \end{proof}

  If $\Pi$ is ordinary at $p$, then the subspace $D_p^{\As}(\Pi)^{\varphi_1 = \alpha_1}$ is weakly admissible, so arises from a 2-dimensional subrepresentation $\cF_1^+ V \subset V$, and similarly for $\cF_2^+ V$; the intersection of these is the unique one-dimensional unramified subspace. Hence the subspace $\Fil_2^+ D_p^{\As}(\Pi) \cap D_p^{\As}(\Pi)^{(\varphi_1 = \alpha_1)}$ also maps isomorphically to $\Dcris(\cF_1^+ V) / \Dcris(\cF_1^+ V \cap \cF_2^+ V)$, and we obtain an isomorphism
  \begin{equation}
   \label{eq:ES}
   \ES_{\Pi, w_1} : \cS_{w_1}(\Pi) \otimes_L E \cong \Dcris\left( \frac{\cF_1^+ V}{\cF_1^+ V \cap \cF_2^+ V}\right).
  \end{equation}

  Dually, we let $\cF_i^+ V^* \subset V^*$ be the annihilator of $\cF_i^+ V$, so we can regard $\ES_{\Pi, w_1}(\eta)$ as an isomorphism
  \[  \Dcris\left(\frac{\cF_1^+ V^* + \cF_2^+ V^* }{\cF_1^+ V^*}\right) \xrightarrow{\ \cong\ } E.\]


\subsection{A class in de Rham cohomology}

The following result is an immediate corollary of Proposition \ref{prop:filiso}:

\begin{proposition}\label{prop:eta}
 Suppose that $\alpha_1 \ne \beta_1$, and $v_p(\alpha_1) < k_1 + 1$. Then there exists a uniquely determined vector $\eta_{\dR} \in D_p^{\As}(\Pi)$ with the following properties:
 \begin{itemize}
  \item $\varphi_1 \eta_{\dR} = \alpha_1 \cdot \eta_{\dR}$;
  \item $\eta_{\dR} \in \Fil_2^+ D_p^{\As}(\Pi)$;
  \item the image of $\eta_{\dR}$ in the graded piece
  \[ \Fil_2^+ D_p^{\As}(\Pi) / (\Fil_1^+ \cap \Fil_2^+) \cong \cS_{w_1}(\Pi) \otimes_L E \]
  coincides with $\eta$.
 \end{itemize}
\end{proposition}

\begin{remark}
 Note that
    \[ \ker\left( (\varphi - \alpha_1 \alpha_2) (\varphi - \alpha_1\beta_2) : D_p^{\As}(\Pi) \to D_p^{\As}(\Pi)\right)\]
    contains $D_p^{\As}(\Pi)^{(\varphi_1 =\alpha_1)}$, but it may be larger; this always occurs if $\Pi$ is a twist of a base-change from $\QQ$ (so that $\alpha_1 / \beta_1 = \alpha_2 / \beta_2$).
\end{remark}

\begin{proposition}\label{prop:extbyzero}
 The extension-by-0 map
 \[ R\Gamma_{\rig, c}\left(Y^{(1-\ord)}_{G, 0}, \cV_G\right)
 \longrightarrow R\Gamma_{\rig, c}(Y_{G, 0}, \cV_G) \cong R\Gamma_{\dR, c}(Y_G, \cV_G) \]
 is a quasi-isomorphism on the $\Pif$ generalised eigenspace for the prime-to-$Np$ Hecke operators.
\end{proposition}

\begin{proof}
 This is a special case of Proposition 4.3 of \cite{LZ-plectic}.
\end{proof}

\begin{notation}\label{not:classfromdR}
 Write $\eta^{(1-\ord)}_{\dR} \in H^2_{\rig, c}(Y^{(1-\ord)}_{G, 0}, \cV_G)$ for the preimage of $\eta_{\dR}$ under the isomorphism in \cref{prop:extbyzero}.
\end{notation}

  \begin{remark}
   Observe that in Proposition \ref{prop:extbyzero} we have chosen a lifting of the de Rham class $\eta_{\dR}$ to the $\pp_1$-ordinary locus characterised by information about the action of Hecke operators \emph{away} from $p$; and, separately, in Theorem \ref{thm:nu1ord} we have lifted the coherent class $\eta$ to the $\pp_1$-ordinary locus using information about the action of the Frobenius $\varphi_1$ at $\pp_1$. So it is not obvious how these liftings are related, and our next task is to find a way to reconcile the two, which we will carry out in \cref{sect:dRss} -- see \cref{prop:nu1ord2} below.
  \end{remark}


\section{Definition of the $p$-adic regulator}

\subsection{Euler system classes}

Let $0 \le j \le \min(k_1, k_2)$. We refer to \cite[\S 4.4]{leiloefflerzerbes18} for the construction of the \emph{Asai--Flach class}
\[ \AF^{[\Pi, j]}_{\et} \in H^1\Big(\ZZ[1 / S], V_p^{\As}(\Pi)^*(-j)\Big), \]
where $S$ is the set of primes dividing $p N \operatorname{disc}(F)$. (More precisely, this class is defined in \emph{op.cit.} for any normalised eigenform $\cF$, not necessarily new; and we are defining $\AF^{[\Pi, j]}_{\et}$ as $\AF^{[\cF, j]}_{\et}$ where $\cF$ is the unique newform generating $\Pi$, consistently with our definition of the Galois representation $V_p^{\As}(\Pi)$.)
%

\begin{note}
 We briefly recall the definition of $\AF^{[\Pi, j]}_{\et}$. Letting $\cV_G$ and $\cV_H$ denote the relative Chow motives (over $L$) associated to the representations $V_G$ and $V_H$ defined in \cref{ss:ClebschGordan}, we have a pushforward map
 \[
 \iota^{[j]}_*: H^1_{\mot}\left(Y_H, \cV_H(1 + t)\right) \otimes_{\QQ} L
 \longrightarrow H^3_{\mot}\left(Y_G, \cV_G(2 + k_1 + k_2 - j)\right),
 \]
 where $t = k_1 + k_2 - 2j$ as before. We have an analogous pushforward map in \'etale cohomology with $E$-coefficients, and the two are compatible via the \'etale regulator map $r_{\et}$. Since the $\Pif$-generalised eigenspace in the \'etale cohomology of $Y_G(\QQbar)$ vanishes outside degree 2, the Hochschild--Serre spectral sequence gives a projection map from $H^3_{\et}\left(Y_G, \cV_G(2 + k_1 + k_2 - j)\right)$ to $H^1\left(\QQ, V_p^{\As}(\Pi)^*(-j)\right)$. We can then define $\AF^{[\Pi, j]}_{\et}$ as the image of the weight $t$ Eisenstein class $\Eis^{t}_{\mot, N}$ (c.f. \cite[Theorem 4.1.1]{KLZ20}) under this chain of maps. Since the motivic class clearly lifts to a smooth $\ZZ[1/S]$-model, its image is unramified outside $S$.
\end{note}


\subsection{Localisation at $p$ and syntomic cohomology}\label{ss:localisation}

\begin{proposition}
 The localisation of $\AF^{[\Pi, j]}_{\et}$ at $p$ lies in the Bloch--Kato subspace
 \[ H^1_\f\left(\Qp, V_p^{\As}(\Pi)^*(-j)\right) \subseteq H^1(\Qp, V_p^{\As}(\Pi)^*(-j)). \]
 In other words, it takes values in the rank $3$ subrepresentation $\left(\cF^+_1 + \cF^+_2\right)V_p^{\As}(\Pi)^*(-j)$.
\end{proposition}

\begin{proof}
 From the construction of $\AF^{[\Pi, j]}_{\et}$, it is clear that it lies in the image of the natural map from motivic to \'etale cohomology; and this map takes values in the Bloch--Kato $H^1_{\mathrm{g}}$ subspace, by a result of \Nek and \Niz \cite{nekovarniziol16}. For any de Rham Galois representation $W$ we have an inclusion $H^1_\f(\Qp, W) \subseteq H^1_{\mathrm{g}}(\Qp, W)$, and this is an equality unless $p^{-1}$ is an eigenvalue of $\varphi$ on $\Dcris(W)$. In our case, with $W = V_p^{\As}(\Pi)^*(-j)$, the eigenvalues of $\varphi$ have complex absolute value $p^{-1-t}$ with $t \ge 0$; so if $t > 0$ we are done.

 In the case $t = 0$, i.e. $k_1 = k_2 = j$, we must be slightly more circumspect. In this case, after multiplying by a non-zero factor depending on a choice of integer $c > 1$, $\Eis^{t}_{\mot, N}$ reduces to the Siegel unit ${}_c g_{0, 1/N}$ in the notation of \cite{kato04}. Since $(p, N) = 1$, this is a unit on the canonical model of $Y_1(N)$ over $\ZZ_{(p)}$ (not only over $\QQ$); see \cite[\S 1.3]{scholl98}. Hence $\AF^{[\Pi, j]}_{\et}$ lies in the image of the motivic cohomology of $Y_G$ over $\Zp$, implying that its \'etale realisation is in $H^1_\f$ in this case as well. (Compare \cite[Proposition 5.4.1]{KLZ20} in the analogous case of Beilinson--Flach elements.)
\end{proof}

Let $\eta_{\dR}$ be as in Proposition \ref{prop:eta}.  Our first goal will be to compute the pairing
\begin{equation}\label{eq:tocompute}
 \left\langle \eta_{\dR}, \log \left(\loc_p \AF^{[\Pi, j]}_{\et}\right)\right\rangle_{D_p^{\As}(\Pi)},
\end{equation}
where $\log$ is the Bloch--Kato logarithm \eqref{eq:expiso}, and $\langle - , - \rangle_{D_p^{\As}(\Pi)}$ denotes the canonical pairing between $D_p^{\As}(\Pi)$ and its dual.

Let $\YY_G$ denote the canonical $\Zp$-model of $Y_G$. This is a smooth $\Zp$-scheme, and we may choose an arithmetic toroidal compatification $\XX_G$ such that $(\YY_G, \XX_G)$ is a \emph{smooth pair} over $\Zp$ in the sense of \cite{KLZ20} (i.e.~$\XX_G$ is smooth, and the cuspidal divisor $\XX_G - \YY_G$ is a smooth normal-crossing divisor relative to $\Spec \Zp$). Thus Besser's theory of \emph{rigid syntomic} and \emph{finite-polynomial} cohomology applies to $\YY_G$, and rigid-syntomic cohomology has a natural comparison to \'etale cohomology.

\begin{notation}
 Let $P(T) = \left(1-\frac{T}{\alpha_1\alpha_2}\right) \left(1-\frac{T}{\alpha_1\beta_2}\right) \in L[T]$, and let $P_{1+j}(T) = P(p^{1 + j}T)$.
\end{notation}

\begin{proposition}
 \label{prop:defnufp}
 There is a unique lift $\eta_{\fp}$ of $\eta_{\dR}$ to the space
 \[ H^2_{\fp, c}\left(\YY_G, \cV_G; 1+j, P_{1+j}\right)[\Pif]. \]
\end{proposition}

\begin{remark}
 The above group is actually independent of $j$ in the range $0 \le j \le \min(k_1, k_2)$: Besser's cohomology for twist $r$ and polynomial $Q$ is defined using the mapping fibre of $Q(p^{-r}\varphi)$, and we have $P_{1+j}(p^{-1-j}\varphi) = P(\varphi)$ for any $j$. However, different values of $j$ will correspond to the \'etale cohomology of different twists of $V_p^{\As}(\Pi)$.
\end{remark}

\begin{proof}
 Since $\Pi$ is cuspidal, the $\Pif$-generalised eigenspace in de Rham (or, equivalently, rigid) cohomology vanishes outside degree 2. So the natural map
 \[  H^2_{\fp, c}\big(\YY_G, \cV_G; 1+j, P\big) \to \Fil^{(1 + j)} H^2_{\dR, c}\big(\YY_G, \cV_G\big)^{P(\varphi) = 0}\]
 is an isomorphism after localising at the $\Pif$-eigenspace. Since $P(\varphi)$ annihilates $\eta_{\dR}$ the result follows.
\end{proof}

The compatibility of \'etale and syntomic Abel--Jacobi maps for smooth pairs (cf.~Proposition 5.4.1 of \cite{KLZ20}) then implies that
\begin{equation}\label{eq:eqinNNfp}
 \begin{aligned} 
  \left\langle \eta_{\dR}, \,  \log\left(\loc_p \AJ^{[\Pi, j]}_{\et}\right)\right\rangle_{D_p(\Pi)} &=
  \left\langle \eta_{\dR}, \,  \log\circ\pr^{\As}_\Pi\circ\iota^{[j]}_*\left(\Eis^{t}_{\et, N}\right)\right\rangle_{\dR, Y_G} \\
  &=\left\langle \eta_{\fp}, \, \iota^{[j]}_*\left(\Eis^{t}_{\syn, N}\right)\right\rangle_{\fp, \YY_G}\\
  & = \left\langle \iota^{[j], *}(\eta_{\fp}), \, \Eis^{t}_{\syn, N} \right\rangle_{\fp, \YY_H},
 \end{aligned}
\end{equation}
where the last equality follows from the adjunction between pushforward and pullback. Note that
\[  \iota^{[j], *}(\eta_{\fp})\in H^2_{\fp, c}(\YY_H, \cV_H; 1+ j, P_{1+j}), \]
and the coefficient module $\cV_H$ depends on $j$.

\section{Computations over the $\pp_1$-ordinary locus}

In this section, we shall establish a relation between $\eta_{\dR}^{(1-\ord)}$ and $\eta^{(1-\ord)}$, and we will show that $\eta_{\fp}$ lifts to a class over the $1$-ordinary locus.

\subsection{The de Rham spectral sequence for $\cX_G^{(1-\ord)}$}
\label{sect:dRss}

Since $(\XX_G, \YY_G)$ is a smooth pair, and our coefficient system $\cV_G$ extends to a vector bundle on $\cX_G$ whose connection has log poles along the boundary divisor $D$, we can compute rigid cohomology of $Y_{G, 0}$ using the analytification of the BGG complex on $\cX_G$ (just as we did for de Rham cohomology above). By taking the mapping fibre of the restriction map we obtain the same result for compactly-supported cohomology of $Y_{G, 0}^{(1-\ord)}$; that is, we have
\[ R\Gamma_{\rig, c}(Y_{G, 0}^{(1-\ord)}, \cV) \cong R\Gamma_{\dR, c}(\cX_G^{(1-\ord)}, \cV\langle-D_G\rangle) \cong R\Gamma_c(\cX_G^{(1-\ord)}, \BGG^\bullet(-D_G)).\]

This gives rise to a first-quadrant spectral sequence converging to $H^*_{\rig, c}(Y^{(1-\ord)}_{G, 0}, \cV_G)$, whose $E_1^{mn}$ terms are $H^n_c\left(\cX_G^{(1-\ord)}, \BGG^m(-D_G)\right)$.

We denote by $\RGt_{\dR, c}(\cX_G^{(1-\ord)}, \cV\langle-D_G\rangle)$ the cohomology of the truncated complex
\[ \tau_{\ge 1} \BGG^\bullet(-D_G) = \left[0 \longrightarrow \omega^{(-k_1, k_2 + 2)} \oplus \omega^{(k_1 + 2, -k_2)} \longrightarrow \omega^{(k_1 + 2, k_2 + 2)}\right](-D),
\]
which is quasi-isomorphic to the filtered de Rham complex $\Fil^{1 + j} \left(\cV_G \otimes \Omega^\bullet_{X_G / L}\langle-D_G\rangle\right)$, for any $j$ in our range. Since $\cX_G$ is connected and non-compact, $H^0_c(\cX_G^{(1-\ord)}, -)$ is zero for all locally-free sheaves, and so we obtain an isomorphism
\[
\alpha_{\dR}^{(1-\ord)} :
H^1_c\left(\cX_G^{(1-\ord)}, \BGG^1(-D_G)\right)^{\nabla = 0}
\cong
\wH^2_{\dR, c}(\cX_G^{(1-\ord)}, \cV\langle-D_G\rangle).
\]
Moreover, the inclusion of the subcomplex $\tau_{\ge 1} \BGG$ into the full BGG complex gives a commutative square of maps
\[
\begin{tikzcd}
 \wH^2_{\dR, c}(\cX_G^{(1-\ord)}, \cV\langle-D_G\rangle)
 \rar \dar &
 \Fil^1 H^2_{\dR}(X_G, \cV\langle-D_G\rangle)_E
 \dar[hook] \\
 H^2_{\dR, c}(\cX_G^{(1-\ord)}, \cV\langle-D_G\rangle) \rar &
 H^2_{\dR}(X_G, \cV\langle-D_G\rangle)_E,
\end{tikzcd}
\]
in which the top horizontal arrow is compatible, via $\alpha_{\dR}^{(1-\ord)}$, with the natural map
\[
H^1_c\left(\cX_G^{(1-\ord)}, \BGG^1(-D_G)\right) \to H^1_c\left(\cX_G, \BGG^1(-D_G)\right)
= \Fil^1 H^2_{\dR} / \Fil^{k_1 + k_2 + 2}.
\]

Since the partial Frobenius $\varphi_1$ lifts to $\cX_G^{(1-\ord)}$, there is an action of $\varphi_1$ on both of the spaces in the left-hand column, compatible with the action on $ H^2_{\dR, c}(\cX_G^{(1-\ord)}, \cV\langle-D_G\rangle, 1+j)$ given by comparison with the rigid cohomology of $X_{G, 0}$.

\begin{proposition}
 \label{prop:nu1ord2}
 If $\eta^{(1-\ord)}$ is as in \cref{thm:nu1ord}, then the class $(\eta^{(1-\ord)}, 0)$ in \[ H^1_c\left(\cX_G^{(1-\ord)}, \BGG^1(-D_G)\right) = H^1_c\left(\cX_G^{(1-\ord)}, \omega^{-k_1, k_2 + 2}(-D_G)\right) \oplus H^1_c\left(\cX_G^{(1-\ord)}, \omega^{k_1 + 2, -k_2}(-D_G)\right)\]
 is in the kernel of $\nabla$, and hence defines a class in $\wH^2_{\dR, c}(\cX_G^{(1-\ord)}, \cV\langle-D_G\rangle)$. The image of this class in $H^2_{\rig, c}(Y_{G, 0}^{(1-\ord)}, \cV_G)$, under the left vertical map of the above diagram, is $\eta^{(1-\ord)}_{\dR}$.
\end{proposition}

\begin{proof}
 We first show that $\eta^{(1-\ord)}$ is in the kernel of $\nabla$. This follows from the fact that it has strictly small slope for $\varphi_1$: the slopes of $\varphi_1$ on $\omega^{(k_1 + 2, k_2 + 2)}$ are all at least $k_1 + 1$, and the operator $\nabla$ commutes with the Frobenius, so it must be zero on all Frobenius eigenspaces of slope smaller than $k_1 + 1$.

 This shows that $\eta^{(1-\ord)}$ has a well-defined image in $H^2_{\rig, c}(Y_{G, 0}^{(1-\ord)}, \cV_G)$. Let us temporarily write $\hat{\eta}^{(1-\ord)}_{\dR}$ for this image; our goal is to show that it coincides with $\eta_{\dR}^{(1-\ord)}$. Since the latter is characterised as the unique lifting of $\eta_{\dR}$ compatible with Hecke operators away from $p\fN$, it suffices to show that $\hat{\eta}^{(1-\ord)}_{\dR}$ lies in the correct Hecke eigenspace, and that it maps to $\eta_{\dR}$ in $H^2_{\dR, c}(Y_G, \cV_G)$.

 It follows readily from the construction of $\eta^{(1-\ord)}$ that it lies in the $\Pi$-eigenspace for the Hecke operators away from $\pp_1$ (including $T_{\pp_2}$), since these operators commute with $\varphi_1$. So $\hat{\eta}^{(1-\ord)}_{\dR}$ has the correct Hecke action. Moreover, its image in $H^2_{\dR, c}(\cY_G, \cV_G)$ is in the $\varphi_1 = \alpha_1$ eigenspace (because $\eta^{(1-\ord)}$ is); and it lies in $\Fil^{(0, 1+k_2)}$, and maps to $\eta$ in $\Gr^{(0, 1 + k_2)}$, so it must be equal to $\eta_{\dR}$.
\end{proof}


\subsection{FP-cohomology of the $\pp_1$-ordinary locus}

We now consider a modified form of Besser's finite-polynomial cohomology, namely \emph{Gros fp-cohomology}, for the $\pp_1$-ordinary locus (with compact supports):

\begin{definition}
 Define the Gros fp-cohomology\footnote{As before, this is in fact independent of $j$ in the stated range, despite the notations.}
 \[ \RGt_{\fp, c}(\cX_G^{(1-\ord)}, \cV\langle-D_G\rangle; 1+j, P_{1+j})\]
 to be the mapping fibre of the map
\[ \RGt_{\dR, c}(\cX_G^{(1-\ord)}, \cV\langle-D_G\rangle)
\xrightarrow{\ P_{1+j}(p^{-1-j}\varphi)\ } R\Gamma_{\rig, c}(X_{G, 0}^{(1-\ord)}, \cV\langle-D_G\rangle).\]
\end{definition}

\begin{note}
 Although $\varphi_1$ lifts to $\cX_G^{(1-\ord)}$, the full Frobenius $\varphi$ does not; so although $R\Gamma_{\rig, c}(X_{G, 0}^{(1-\ord)}, \cV\langle-D_G\rangle)$ is isomorphic to de Rham cohomology of $\cX_G^{(1-\ord)}$, the action of Frobenius (given by the functoriality of rigid cohomology) cannot be `seen' via this description.
\end{note}

For proper schemes such as $\XX_G$, there is no difference between Gros fp-cohomology and the usual fp-cohomology, so there is an extension-by-zero map
\[ \RGt_{\fp, c}(\cX_G^{(1-\ord)}, \cV\langle-D_G\rangle; 1+j, P_{1+j}) \to
R\Gamma_{\fp, c}(\XX_G, \cV_G\langle-D_G\rangle; 1+j, P_{1+j}).\]

\begin{proposition}\label{prop:lasteta}
 There exists a class
 \[ \tilde{\eta}_{\fp}^{(1-\ord)} \in \wH^2_{\fp, c}(\cX_G^{(1-\ord)}, \cV\langle-D_G\rangle; 1+j, P_{1+j}) \]
 with the following properties:
 \begin{itemize}
  \item Its image in $H^2_{\fp, c}(\XX_G, \cV_G\langle-D_G\rangle; 1+j, P_{1+j})$ is the $\eta_{\fp}$ of \cref{prop:defnufp}.
  \item Its image in $\wH^2_{\dR, c}(\cX_G^{(1-\ord)}, \cV\langle-D_G\rangle)$ is the class $(\eta^{(1-\ord)}, 0)$ of \cref{prop:nu1ord2}.
 \end{itemize}
\end{proposition}

\begin{remark}
 The reader may be relieved to hear that the class $\tilde{\eta}_{\fp}^{(1-\ord)}$ is ``the ultimate among liftings of $\eta$'': all other variants of $\eta$ will be images of this one.
\end{remark}

\begin{proof}
 From the mapping-fibre definition of Gros fp-cohomology we have a long exact sequence
 \[
 \dots \to H^{1}_{\rig, c}(X_{G, 0}^{(1-\ord)}, \cV\langle-D_G\rangle) \to
 \wH^2_{\fp, c}(\cX_G^{(1-\ord)}, \cV\langle-D_G\rangle; 1+j, P_{1+j}) \to \wH^2_{\dR, c}(\cX_G^{(1-\ord)}, \cV\langle-D_G\rangle) \to \dots, \]
 in which the boundary map is $P(\varphi) \circ \iota$. Moreover, this is compatible under extension-by-zero with the corresponding sequence for fp-cohomology of $\XX_G$.

 We have seen that the image of $(\eta^{(1-\ord)}, 0)$ under $\iota$ is the class $\eta_{\dR}^{(1-\ord)}$, which is annihilated by $P(\varphi)$. Hence it lifts to $\wH^2_{\fp, c}$. This lift is unique up to the image of an element of $H^{1}_{\rig, c}(X_{G, 0}^{(1-\ord)}, \cV\langle-D_G\rangle)$; but from \cref{prop:extbyzero} it follows that this group has trivial $\Pif$-eigenspace for the prime-to-$p$ Hecke operators, so there is a unique Hecke-equivariant lifting of $(\eta^{(1-\ord)}, 0)$ to Gros-fp cohomology. The image of this class under extension-by-0 is therefore a Hecke-equivariant lifting of $\eta_{\dR}$ to fp-cohomology of $\XX_G$, so it must be $\eta_{\fp}$.
\end{proof}

Now as shown in Note \ref{note:iota} we have $\iota^{-1}(\cX_G^{\ord}) = \cX_H^{\ord}$, so we deduce the following:

\begin{corollary}
 The pairing \eqref{eq:tocompute} is equal to $\left\langle \iota^{[j],*}\left(\tilde{\eta}_{\fp}^{(1-\ord)}\right), \widetilde{\Eis}^{t, \ord}_{\syn} \right\rangle$, where $\widetilde{\Eis}^{t, \ord}_{\syn}$ is the image of $\Eis^t_{\syn}$ in the Gros fp-cohomology of $\cX_H^{\ord}$.\qed
\end{corollary}

Here we define $\iota^{[j],*}$ for classes in Gros fp-cohomology using the quasi-isomorphism from the BGG complex to the full de Rham complex. We shall give explicit formulae in \cref{sect:compareBGG} below, but first we need to give an explicit form for $\tilde{\eta}_{\fp}^{(1-\ord)}$, which can only be done after restricting to $\cX_G^{\ord} \subset \cX_G^{(1-\ord)}$.


\section{Restricting to the fully-ordinary locus}

\subsection{Cohomology with partial compact support}

We now consider the cohomology of the fully ordinary locus $X_{G, 0}^{\ord}$. Since the complement $X_{G, 0} - X_{G, 0}^{\ord}$ is the disjoint union of a closed subvariety $X_{G, 0}^{(1-\ss)}$ and the open subvariety $X_{G, 0}^{(2-\ss)} \cap X_{G, 0}^{(1-\ord)}$, we can apply the formalism of \cite[\S 13]{LZ20} to define ``cohomology of $\cX_G^{\ord}$ with compact support towards $\cX_G^{(1-\ss)}$'' (with coefficients in any abelian sheaf on $\cX_G$). We write this as $R\Gamma_{c1}(\cX_G, -)$. By construction, this comes equipped with a restriction map
\[ R\Gamma_c\left(\cX_G^{(1-\ord)}, -\right) \to R\Gamma_{c1}\left(\cX_G^{\ord}, -\right), \]
which fits into a triangle whose third term is the compactly-supported cohomology of $\cX_G^{(1-\ord)} \cap \cX_G^{(2-\ss)}$. In particular, by the same argument as \cref{prop:extbyzero}, we have isomorphisms
\[
R\Gamma_{\rig, c1}\left(\cX_G^{\ord}, \cV\langle-D_G\rangle\right)[\Pif] \longleftarrow
R\Gamma_{\rig, c}\left(\cX_G^{(1-\ord)},  \cV\langle-D_G\rangle\right)[\Pif] \longrightarrow
R\Gamma_{\rig, c}\left(\cX_G,  \cV\langle-D_G\rangle\right)[\Pif].
\]
The advantage of working with $\cX_G^{\ord}$ is that both $\varphi_1$ and $\varphi_2$ have liftings.

\begin{notation}\label{not:nuord}
 Write $\eta^{\ord} \in H^1_{c1}\Big(\cX_G^{\ord}, \omega^{(-k_1, k_2 + 2)}(-D_G)\Big)$ for the image of $\eta^{(1-\ord)}$ under the above restriction map.
\end{notation}

\begin{note}\label{note:Tpdecomp}
 Over the ordinary locus, we have commuting liftings of both $\varphi_1$ and $\varphi_2$, which both act on $R\Gamma_{c1}\left(\cX_G^{\ord}, \omega^{(\dots)}\right)$; and the operator $T_{\pp_2}$ decomposes as
 \(T_{\pp_2}=U_{\pp_2}+\varphi_2, \)
 with $U_{\pp_2} \circ \varphi_2 = p^{k_2 + 1} \langle \pp_2\rangle$.
\end{note}

\begin{corollary}\label{cor:inkerU2}
 The class $P(\varphi) \cdot \eta^{\ord}$ lies in the kernel of the Hecke operator $U_{\pp_2}$.
\end{corollary}

\begin{proof}
 We have $\varphi=\varphi_1\varphi_2$.
 The result follows easily from \cref{thm:nu1ord} and Note \ref{note:Tpdecomp}, using the fact that $\eta^{(1-\ord)}$ is a $T_{\pp_2}$-eigenvector.
\end{proof}


\subsection{The Poznan spectral sequence}

We now recall a spectral sequence (introduced in \cite{LZ20}) relating Gros fp-cohomology to coherent cohomology. Here Gros fp-cohomology is defined in the same way as for the $\pp_1$-ordinary locus above, but now with $c1$-support.

\begin{definition}
 \label{def:cij}
 We define groups $\sC^{m, n}_{\fp, c1}(\cX_G^{\ord}, \cV\langle-D_G\rangle; 1+j, P_{1+j})$, for $m, n \ge 0$, by
 \[ \sC^{m, n}_{\fp, c1}(\dots) =
 H^n_{c1}\left(\cX_G^{\ord}, (\tau_{\ge 1} \BGG)^m(-D_G)\right) \oplus H^n_{c1}\left(\cX_G^{\ord}, \BGG^{m-1}(-D_G)\right);
 \]
 and we define differentials $\sC^{m, n}_{\fp, c1}(\dots) \to \sC^{m+1, n}_{\fp, c1}(\dots)$ by
 \[ (x, y) \mapsto \left(\nabla x, P(\varphi / p^n) \iota(x) - \nabla y\right), \]
 where $\iota$ is the inclusion of $\tau_{\ge 1} \BGG^\bullet$ into $\BGG^\bullet$, and $\nabla$ the differential of the BGG complex.
\end{definition}

Note that $\sC^{m, n}_{\fp, c1}$ is zero for $m \le 0$ (this is obvious for $m \le -1$, and holds for $m = 0$ since the truncated BGG complex vanishes in degree 0). It is also zero for $n \le 0$, since $H^0_{c1}$ vanishes for locally-free sheaves.

\begin{proposition}\label{prop:Poznan}
 There is a first-quadrant spectral sequence, the \emph{Pozna\'n spectral sequence}, with
 \[ {}^{\Pz}E_1^{mn} = \sC^{m, n}_{\fp, c1}(\cX_G^{\ord}, \cV\langle-D_G\rangle; 1+j, P_{1+j}), \]
 and the differentials on the $E_1$ page given by the formula above. This spectral sequence abuts to the Gros fp-cohomology $\wH^{m+n}_{\fp, c1}(\cX_G^{\ord}, \cV\langle-D_G\rangle; 1+j, P_{1+j})$.
\end{proposition}
\begin{proof}
 Choose double complexes computing $ \RGt_{\dR, c1}(\cX_G^{\ord}, \cV\langle-D_G\rangle)$ and $R\Gamma_{\rig, c1}(X_{G, 0}^{\ord}, \cV\langle-D_G\rangle)$, respectivly, in such a way that $P_{1+j}(\varphi)$ extens to a map of double complexes. Then $\wH^{m+n}_{\fp, c1}(\cX_G^{\ord}, \cV\langle-D_G\rangle; 1+j, P_{1+j})$ is computed by the total complex of the associated mapping fibre. The Pozna\'n spectral sequence is one of the spectral sequences associated to this triple complex.
\end{proof}

\begin{definition}
 We define a \emph{coherent fp-pair} (of degree $(m, n)$, twist $1+j$ and $c1$-support) to be an element of the kernel of the differential $\sC^{m, n}_{\fp, c1} \to \sC^{m+1, n}_{\fp, c1}$; we write the group of these as $\mathscr{Z}^{m, n}_{\fp, c1}(\cX_G^{\ord}, \cV\langle-D_G\rangle; 1+j, P_{1+j})$.
\end{definition}

Thus an fp-pair is a pair of elements
\[
x \in H^n_{c1}(\cX^{\ord}_G, \tau_{\ge 1}\BGG^m(-D_G)),
\qquad y \in H^n_{c1}(\cX^{\ord}_G, \BGG^{m-1}(-D_G))
\]
which satisfy
\begin{equation}
 \label{eq:fppair}
 \nabla(x)=0\qquad \text{and}\qquad \nabla(y)=P(p^{-1-j}\varphi)\iota(x).
\end{equation}

\begin{note}\label{note:kernabla}
 Given $x$, the equation \eqref{eq:fppair} does not determine the element $y$ uniquely: it is determined up to an element of $H^n_{c1}(\cX^{\ord}_G, \BGG^{m-1}(-D_G))^{\nabla=0}$.
\end{note}

\begin{proposition}
 For $0\leq j\leq \min\{k_1, k_2\}$, the spectral sequence gives rise to an isomorphism
 \[
 \alpha_{G, \fp}:  \mathscr{Z}^{1, 1}_{\fp, c1}(\cX_G^{\ord}, \cV\langle-D_G\rangle; 1+j, P_{1+j})
 \xrightarrow{\ \cong\ }
 \wH^2_{\fp, c1}(\cX^{\ord}_G, \cV\langle -D_G\rangle; 1+j, P_{1+j}).
 \]
\end{proposition}

\begin{proof}
 Since the $E_1$ page of the spectral sequence is zero for $m \le 0$, the term ${}^{\Pz} E_2^{1, 1}$ is the kernel of the differential on ${}^{\Pz} E_1^{1, 1}$, which is the group of fp-pairs. Since this is the only nonzero term with $m + n = 2$, and the incoming and outgoing differentials at ${}^{\Pz} E_r^{1, 1}$ are trivially zero for all $r \ge 1$, we conclude that ${}^{\Pz} E_2^{1, 1}$ coincides with $\wH^2$ of the abutment, as required.
\end{proof}

\begin{corollary}\label{cor:existencefppair}
 Every cohomology class in $\wH^2_{\fp, c1}(\cX^{\ord}_G, \cV\langle -D_G\rangle; 1+j, P_{1+j})$ can be uniquely represented by a coherent fp-pair of degree $(1, 1)$.
\end{corollary}

\begin{proof}
 Since the $E_1$ terms of the spectral sequence are supported in the region $m, n \ge 1$, there are no other terms of total degree 2 except $(m, n) = (1, 1)$; and clearly $E_2^{(1, 1)} = E_\infty^{(1, 1)}$ since the differentials on the $E_2$ page and beyond land outside this region.
\end{proof}

An exactly analogous argument shows that for the truncated de Rham cohomology groups $\wH^i_{\dR, c1}$ (the hypercohomology of $\tau_{\ge 1} \BGG^\bullet(-D_G)$), we have an isomorphism
\[H^1_{c1}(\cX^{\ord}_G, \BGG^1(-D_G)) \xrightarrow{\ \alpha_{G, \rig}\ }  \wH_{\dR, c1}^2(\cX^{\ord}_G, \cV\langle -D_G\rangle, 1+j).\]

\begin{lemma}\label{lem:fpdiagram}
 Let $0\leq j\leq\min\{k_1, k_2\}$. We have a commutative diagram
 \[
 \begin{tikzcd}[row sep=large, column sep=large]
  \mathscr{Z}^{1, 1}_{\fp, c1}(\cX^{\ord}_G, \cV\langle-D_G\rangle; 1+j, P_{1+j})
  \rar["\alpha_{G, \fp}"] \dar &
  \wH^2_{\fp, c1}(\cX^{\ord}_G, \cV\langle -D_G\rangle; 1+j, P_{1+j})
  \dar \\
  H^1_{c1}(\cX^{\ord}_G, \BGG^1(-D_G)) \rar["\alpha_{G, \rig}"] & \wH_{\dR, c1}^2(\cX^{\ord}_G, \cV\langle -D_G\rangle, 1+j)
 \end{tikzcd}
 \]
 where the vertical arrows are the natural projection maps.
\end{lemma}
\begin{proof}
 Clear from the constructions.
\end{proof}


\subsection{Construction of a coherent fp-pair}

\begin{notation}\label{not:nuresord}
 Define
 \[\tilde\eta^{\ord}_{\fp}\in \wH^2_{\fp, c1}(\cX^{\ord}_G, \cV\langle -D_G\rangle; 1+j, P_{1+j}) \]
 to be the restriction of the class $\tilde\eta^{(1-\ord)}_{\fp}$ constructed above.
\end{notation}

\begin{corollary}
 There exists a uniquely determined class
 \[ \xi \in H^1_{c1}(\cX^{\ord}_G, \omega^{(-k_1, -k_2)}(-D_G)), \]
 which is independent of $j$, such that $(\eta^{\ord}, \xi)$ forms an fp-pair representing the class $\tilde\eta^{\ord}_{\fp}$, and such that $\xi$ lies in the $\Pi$-eigenspace for the Hecke operators away from $pN$.
\end{corollary}

\begin{proof}
 The existence of $\xi$ is immediate from Corollary \ref{cor:existencefppair} and Lemma \ref{lem:fpdiagram}; the independence from $j$ is clear by construction.

 Now, if $\xi'$ is another element such that $(\eta^{\ord}, \xi')$ also represents $\tilde\eta^{\ord}_{\fp}$, then
 \[ \xi-\xi'\in H^1_{c1}(\cX^{\ord}_G, \BGG^0(-D_G))^{\nabla=0} \cong H^1_{\rig, c1}(X_{G, 0}, \cV\langle -D_G\rangle).\]
 As we have seen, the $\Pif$-eigenspace in this cohomology is zero, so there is a unique choice of $\xi$ which is Hecke-equivariant.
\end{proof}

\begin{lemma}
 The element $\xi$ has the following properties:
 \begin{equation}\label{lem:propxi}
  \varphi_1 \cdot \xi=\alpha_1\, \xi\qquad \text{and}\qquad U_{\pp_2} \cdot \xi=0.
 \end{equation}
\end{lemma}
\begin{proof}
 We first show the latter statement. We deduce from Corollary \ref{cor:inkerU2} that
 \[ U_{\pp_2} \cdot \xi\in H^1_{c1}(\cX^{\ord}_G, \BGG^0(-D_G))^{\nabla=0}[\Pif].\]
 But as observed above, this space is zero. The former statement follows analogously, by considering the element $(\varphi_1-\alpha_1) \cdot \xi$.
\end{proof}

We now lift these classes from the BGG complex to the full de Rham complex:

\begin{definition}\label{def:nudot}\
 \begin{enumerate}
  \item Write $\dot\xi$ for the image of $\xi$ in $H^1_{c1}(\cX^{\ord}_G, \cV(-D_G))$.
  \item Similarly, for $0 \leq j \leq \min\{k_1, k_2\}$ write $\dot\eta^{\ord}_j$ for the image of $\eta^{\ord}$ in
  \[ H^1_{c1}(\cX^{\ord}_G, \Fil^j \cV \otimes \Omega^1\langle -D_G\rangle).\]
 \end{enumerate}
\end{definition}

\begin{lemma}
 The element $\dot\xi$, like $\xi$, also satisfies
 \begin{equation}\label{lem:propxibreve}
  \varphi_1 (\dot\xi)=\alpha_1\, \dot\xi\qquad
  \text{and}\qquad U_{\pp_2}(\dot\xi)=0.
 \end{equation}
\end{lemma}

The following result will be very important for the regulator evaluation:

\begin{proposition}\label{prop:xiinkerUp}
 We have
 \[ U_p\circ \iota^*(\dot\xi)=0.\]
\end{proposition}
\begin{proof}
 As maps $H^\bullet(\cX^{\ord}_G)\to H^\bullet(\cX^{\ord}_H)$, we have the following identity:
 \begin{equation}\label{eq:Heckeop}
  U_p\circ\iota^*\circ\varphi_{\pp_1}=\iota^*\circ (\langle \pp_1\rangle U_{\pp_2}).
 \end{equation}
 Since $\dot\xi$ is an eigenvector of $\varphi_{\pp_1}$ with non-zero eigenvalue, and it is in the kernel of $U_{\pp_2}$, the Proposition follows.
\end{proof}

\section{Expression via coherent cohomology}

\subsection{Relating the BGG and de Rham complexes}
\label{sect:compareBGG}

We need to recall some formulae relating the de Rham and BGG complexes for $\GL_2$. Let $k \in \ZZ_{\ge 0}$. The BGG complex for weight $k \ge 0$ is given by $\left[\omega^{-k} \xrightarrow{\Theta} \omega^{k+2}\right]$, where $\Theta$ is a differential operator given in terms of $q$-expansions by
\[ \Theta = \tfrac{(-1)^k}{k!} \theta^{k+1}, \qquad \theta = q \tfrac{\mathrm{d}}{\mathrm{d}q}. \]
(Cf.~\cite[Remark 2.17]{tianxiao16} for example.) The map $\BGG^\bullet \to \DR^\bullet$ is given as follows.
\begin{itemize}
 \item In degree 1, it is given by the tensor product of the natural inclusion $\omega^k = \Fil^k (\Sym^k \cW_H) \into \cW_H$ and the Kodaira--Spencer isomorphism $\omega^2 \cong \Omega^1(D)$.
 \item In degree 0, a section $s$ of $\omega^{-k}$ is mapped to the unique section of $\Sym^k \cW_H$ whose image in $\Sym^k \cW_H / \Fil^0 \cong \omega^{-k}$ is $s$, and whose image under the differential $\nabla$ lands in $\omega^k \otimes \Omega^1(D)$.
\end{itemize}

We now recall (and somewhat reformulate) some results from \S 4 of \cite{KLZ20} giving a completely explicit description of these maps. We write $\sX^{\ord}_H$ for the ordinary locus \emph{as a classical rigid space} (not a dagger space), i.e.~neglecting overconvergence. Passing to the Igusa tower $\mathscr{I\!G}_H$ (the canonical $\Zp^\times$-covering of $\sX^{\ord}_H$ parametrising ordinary elliptic curves with a trivialization of their formal group), we obtain a canonical section $v$ of $\omega$, corresponding to the invariant differential form $\tfrac{dT}{T}$ on the Tate curve $\mathbb{G}_m / q^\ZZ$; note that $\varphi^*(v) = pv$. If we let $w$ be the unique section such that $\nabla(v) = u \otimes \xi$, where $\xi$ the local basis of $\Omega^1_{\sX^{\ord}_H}(D)$ corresponding to $\tfrac{\mathrm{d}q}{q}$, then $v$ and $w$ are a basis of sections of $\cW_H$ over the Igusa tower, with $w$ spanning the unit-root subspace (we have $\varphi^*(w) = w$) and $\nabla w = 0$. Hence we obtain a basis of sections $(v^a w^{k - a})_{0 \le a \le k}$ of $\Sym^k \cW_H$ in which the actions of $\nabla$ and $\varphi$ are completely explicit.

In these coordinates, the map from $\omega^{-k}$ to $\Sym^k \cW_H$ sends a section $g$ of $\omega^{-k}$ to the section
\[ \sum_{i = 0}^k \tfrac{(-1)^i}{i!} \theta^i(g) \cdot v^i w^{k-i}. \]
One verifies easily that the image of this under $\nabla$ is $\theta^{k+1}(g) v^k \otimes \xi$, as expected. (Note that it is not \emph{a priori} obvious that such a sum is overconvergent if $g$ is, since neither $\theta$ nor the local bases $v^i w^{k-i}$ have any meaning outside the ordinary locus.)

\subsection{The Eisenstein class as a coherent fp-pair}

Recall that the $\GL_2$ Eisenstein class $\Eis^t_{\syn, N}$ lies in $H^1_{\syn}(\YY_H, \cV_H(1 + t))$, where $\YY_H$ is the $\Zp$-model of the modular curve $Y_1(N)$, and $\cV_H$ is the sheaf corresponding to the $t$-th symmetric power of the standard representation.

If we restrict to the open subscheme $\YY_H^{\ord}$ given by removing the supersingular points of the special fibre (so $\YY_H^{\ord}$ has the same generic fibre as $\YY_H$), and work with Gros syntomic cohomology, then the Poznan spectral sequence applied to $\cX_1(N)^{\ord}$ gives the following explicit description:

\begin{proposition}\label{prop:synpairH}
    The Poznan spectral sequence gives rise to an isomorphism
    \[
    \alpha_{H, \syn}:  \mathscr{Z}^{1, 0}_{\syn}(\cX_H^{\ord}, \cV_H(t))
    \xrightarrow{\ \cong\ }
    \wH^1_{\syn}(\cX^{\ord}_H, \cV_H(t)).\]
\end{proposition}

Explicitly, a class $x \in \wH^1_{\syn}(\cX_1(N)^{\ord}, \cV_H, 1+t)$ is given by a coherent syntomic pair $(x_0, x_1)$, where
\[
x_0 \in H^0(\cX_1(N)^{\ord}, \cV), \qquad x_1 \in H^0(\cX_1(N)^{\ord}, \Fil^t \cV \otimes \Omega^1\langle D\rangle), \qquad \nabla x_0 = (1 - p^{-1-t}\varphi) x_1.
\]

\begin{remark}
    Applying the Poznan speactral sequence to Gros fp-cohomology of $\cX_1(N)^{\ord}$ with compact support, we obtain an isomorphism
    \[  \alpha_{H, \fp}: \mathscr{Z}^{1,1}_{\fp,c}(\cX^{\ord}_H, \cV\langle -D_H\rangle, 2; P_{1 + j}) \xrightarrow{\ \cong\ } \wH^2_{\fp,c}(\cX^{\ord}_H, \cV\langle -D_H\rangle, 2; P_{1 + j}). \qedhere\]
\end{remark}

The sheaf $\Fil^t \cV \otimes \Omega^1\langle D\rangle$ is simply $\omega^{t+2}$, so $x_1$ is an overconvergent $p$-adic modular form of weight $t + 2$; via this description, $\varphi$ acts on overconvergent forms as $p^{t + 1} \langle p \rangle V_p$, where $\langle p \rangle$ is the diamond operator for $p \bmod N$.

We let $\widetilde\Eis^{t, \ord}_{\syn, N}$ be the image of $\Eis^t_{\syn, N}$ in this group; we shall now write down an explicit representing pair. The following is a reformulation of Theorem 4.5.7 of \cite{KLZ20}; the formulations differ because we are using symmetric powers here rather than symmetric tensors (the basis vector $v^{[r, s]}$ of \emph{op.cit.} corresponds to $\tfrac{v^s w^r}{r! s!}$ in our present notation) and because we use a slightly different notation for Eisenstein series following \cite{kato04}.

\begin{proposition}\label{prop:Eisexplicit}
 The Eisenstein class $\widetilde\Eis^{t, \ord}_{\syn, N}$ is represented by the coherent syntomic pair $\left(\epsilon_0^t, \epsilon_1^{t}\right)$ whose restrictions to $\sX_H^{\ord}$ are
 \begin{align*}
  \epsilon_0^t & = -N^k \sum_{u=0}^t\frac{(-1)^{t-u}}{u!}\theta^{u}\left(E^{-t, \ord}_{0, 1/N}\right)\cdot v^{t-u}w^u, \\
  \epsilon_1^t & = -\tfrac{N^t}{t!} F^{t+2}_{0, 1/N}\cdot v^t \otimes \xi.
 \end{align*}
 Here $F^{t+2}_{0, 1/N}$ is the algebraic Eisenstein series with $q$-expansion
 \[ \zeta(-1-t) + \sum_{n > 0} q^n \sum_{d|n}(\tfrac{n}{d})^{t + 1}(\zeta_N^d + (-1)^t \zeta_N^{-d}), \]
 and $E^{-t, \ord}_{0, 1/N}$ is an ordinary $p$-adic Eisenstein series of weight $-t$, satisfying $\theta^{t+1} (E^{-t, \ord}_{0, 1/N}) = (1 - \langle p \rangle V_p)F^{t+2}_{0, 1/N}$.\qed
\end{proposition}

\begin{remark}
 Note that the individual terms $\theta^u(E^{-t, \ord}_{0, 1/N})$ are $p$-adic modular forms, but they are not overconvergent (unless $u =  0$). Nonetheless, $\epsilon_0$ is an overconvergent section of $\cV_H$; the non-overconvergence arises because the sections $v, w$ we are using to trivialize $\cV_H$ over the ordinary locus are not themselves overconvergent.
\end{remark}

\subsection{Explicit formulae for the Clebsch--Gordan map}

The map $\operatorname{CG}^{[j]}$ is given explicitly by the following formula (c.f. \cite[Prop. 5.1.2]{KLZ20})\footnote{The factorials appear slightly different from \emph{op.cit.} since we are here working with symmetric powers $v^m$ rather than symmetric tensors $v^{[m]}$. This is also the reason for the presence of $t!$ in the formula for the Eisenstein series.}: for $0\leq s\leq t \coloneqq k_1 + k_2 = 2j$, we have
\begin{multline}
 \label{eq:CGexplicit}
 \iota^{[j]}(v^sw^{t - s})= \\
 \sum_{\substack{0 \le r \le k_1-j \\ 0 \le r' \le k_2-j \\ r+r'=s}}\sum_{i=0}^j(-1)^i \frac{s!(t-s)!}{r!(r')!(k_1-r-j)!(k_2-r'-j)!i!(j-i)!}v^{r+i}w^{k_1-r-i}\boxtimes v^{r'+j-i}w^{k_2-r'-j+i}\otimes e_{-j}.
\end{multline}
\begin{lemma}
 For given values of $k_1, k_2, j$, the image of $\iota^{[j]}(v^sw^{t - s})$ in the line spanned by the basis vector $v^{k_1} \boxtimes w^{k_2} \otimes e_{-j}$ is zero for all $s$ except $s = k_1-j$, in which case it is equal to $\tfrac{(-1)^j}{j!} \cdot v^{k_1} \boxtimes w^{k_2} \otimes e_{-j}$.
\end{lemma}

\begin{proof}
 This is a superficially modified version of Proposition 5.1.2 of \cite{KLZ20}.
\end{proof}

\begin{remark}
 In particular, this shows that the Clebsch--Gordan map is not in general defined integrally (i.e.~does not respect the lattice given by the $\ZZ$-span of the basis vectors). However, the coefficient $j!$ is the worst possible -- one can check that $j! \iota^{[j]}$ is integral.
\end{remark}


\subsection{Reduction to a pairing in coherent cohomology}

Recall that we want to evaluate the pairing
\begin{equation}\label{eq:toevaluate}
 \left\langle (\iota^{[j]})^*(\eta_{\fp}), \, \Eis^{t}_{\syn, N}\right\rangle.
\end{equation}
We first observe that
\begin{align}
 \text{\eqref{eq:toevaluate}} & = \left\langle (\iota^{[j]})^*\left(\eta^{(1-\ord)}_{\fp}\right), \, \Eis^{t, \ord}_{\syn, N}\right\rangle. \notag\\
 & =  \left\langle (\iota^{[j]})^*\left(\tilde\eta^{(1-\ord)}_{\fp}\right), \, \widetilde\Eis^{t, \ord}_{\syn, N}\right\rangle\notag \\
 & =  \left\langle (\iota^{[j]})^*\left(\tilde\eta^{\ord}_{\fp}\right), \, \widetilde\Eis^{t, \ord}_{\syn, N}\right\rangle\label{eq:cohpairing}.
\end{align}
Here,
\eqref{eq:cohpairing} takes values in the group
\[ \wH^3_{\fp, c}(\cX^{\ord}_H, \Qp\langle -D_H\rangle, 2; P_{1 + j})\cong^{\tr} \Qp.\]
Now using the isomorphisms $\alpha_{G, \fp}$ and $\alpha_{H, \fp}$ we can express
\[ (\iota^{[j]})^*\left(\tilde\eta^{\ord}_{\fp}\right) \in \wH^2_{\fp,c}(\cX^{\ord}_H, \cV\langle -D_H\rangle, 1+j; P_{1 + j}) \]
as the coherent fp-pair
\[\left( \iota^{[j]})^*( \dot\eta^{\ord}_j), \,  \iota^{[j]})^*( \dot\xi)\right)\in  \mathscr{Z}^{1,1}_{\fp,c}(\cX^{\ord}_H, \cV\langle -D_H\rangle, 1+j; P_{1 + j}).\]
We recall the following result from  \cite[Lemma 18.2.1]{LZ20}:

\begin{lemma}
 Using Besser's formalism we can define a pairing
 \[ \mathscr{Z}^{1,1}_{\fp,c}(\cX^{\ord}_H, \cV\langle -D_H\rangle, 1+j; P_{1 + j})\, \times\,  \mathscr{Z}^{1, 0}_{\syn}(\cX_H^{\ord}, \cV_H(t))\to \QQ_p\]
 which is compatible
 \[ \wH^2_{\fp,c}(\cX^{\ord}_H, \cV\langle -D_H\rangle, 1+j; P_{1 + j})\, \times\,  \wH^1_{\syn}(\cX_1(N)^{\ord}, \cV_H, t)\to \QQ_p\]
 under the isomorphisms $\alpha_{H, \fp}$ and $\alpha_{H, \syn}$.
\end{lemma}

 As a direct consequence, we obtain

\begin{proposition}
    We have
    \begin{equation}\label{eq:cohpairing1}
    \text{\eqref{eq:cohpairing}}=  \left\langle \left( (\iota^{[j]})^*( \dot\eta^{\ord}_j), \,  \iota^{[j]})^*( \dot\xi)\right), \, (\epsilon_1, \epsilon_0)\right\rangle.
    \end{equation}
\end{proposition}

%

We evaluate \eqref{eq:cohpairing1} by making the formalism of cup products in fp-cohomology explicit (cf.~\cite{besser00a}). We have
\[ P_{1 + j}(xy)= a(x, y) P_{1 + j}(x)+b(x, y)(1-y)\]
with $a(x, y) = y$ and
\[ b(x, y)=\frac{P_{1+j}(xy)-y P_{1 + j}(x)}{1-y}=1-b\cdot x^2 y, \qquad b = \frac{p^{2j+2}}{\alpha_1^2\alpha_2\beta_2} = \frac{\beta_1}{\alpha_1} \cdot \frac{1}{p^{t} \chi_{\Pi}(p)}. \]
Hence
\begin{align*}
 & P_{1 + j}(p^{-1})\times \left\langle (\iota^{[j]})^*\left( \dot\eta^{\ord}_j, \dot\xi\right), \, (\epsilon_1^{t}, \epsilon_0^{t})\right\rangle\\
 & = (\iota^{[j]})^*(\dot\xi)\cup \varphi_H^*(\epsilon_1^{t}) +
 \left(1 - \frac{\beta_1 \cdot (\varphi_H^{*, 2}\otimes \varphi_H^*)}{\alpha_1 \cdot p^{t} \chi_{\Pi}(p)} \right)
 \left((\iota^{[j]})^*(\dot\eta^{\ord}_j)\cup \epsilon_0^{t}\right).
\end{align*}

\begin{lemma}\label{lem:zeroterm}
 We have $(\iota^{[j]})^*(\dot\xi)\cup \varphi_H^*(\epsilon_0^{t})=0$.
\end{lemma}
\begin{proof}
 Observe that $U_p$ acts on the top-degree rigid cohomology as multiplication by a power of $p$. But we also have
 \[
 U_p\left((\iota^{[j]})^*(\dot\xi)\cup \varphi_H^*(\epsilon_0^{t})\right)=
 U_p\left((\iota^{[j]})^*(\dot\xi)\right)\cup \epsilon_0^{t},
 \]
 which is equal to zero by Proposition \ref{prop:xiinkerUp}. The Proposition follows.
\end{proof}

\begin{proposition}\label{prop:valuepairing}
 Equation \eqref{eq:cohpairing} is equal to
 \[ \frac{\left(1-\tfrac{\beta_1}{p\alpha_1}\right)}{\left(1-\frac{p^j}{\alpha_1\alpha_2}\right)\left(1-\frac{p^j}{\alpha_1\beta_2}\right)}\left\langle(\iota^{[j]})^*(\dot\eta^{\ord}_j), \epsilon_0^{t}\right\rangle.\]
\end{proposition}

\begin{proof}
 Since any $p$-depleted form will pair to 0 with a form that's in a direct sum of finite-slope $\varphi$-eigenspaces, we only care about the $\epsilon_0$ term modulo exact forms. We also only care about its projection to the eigenspace where the diamond operators act by $\chi_{\Pi}^{-1} |_{\QQ}$, because we are pairing it with a class in the $\Pi$-eigenspace.

 On the space $H^2_{\rig, c}(\cX_H^{\ord}\langle -D_H\rangle, \Qp(2))$, the Frobenius acts as $p^{-1}$, and $\langle p \rangle$ acts trivially; so we can replace $\frac{\beta_1 \cdot (\varphi_H^{*, 2}\otimes \varphi_H^*)}{\alpha_1 p^{t} \chi_{\Pi}(p)}$ with $\frac{\beta_1 \cdot (1 \otimes \langle p \rangle \varphi_H^{-1})}{\alpha_1 p^{t+2}}$.

 The operator $\varphi^{-1}$ makes sense modulo $p$-depleted forms, and in this quotient we have
 \[ \varphi^{-1} \cdot \epsilon_0^{t} = p^{t+1} \langle p \rangle^{-1} \epsilon_0^{t}. \]
 So we are done.
\end{proof}

\begin{remark}
 Note that $\dot\eta^{\ord}_j$ naturally extends to a class defined over the $\pp_1$-ordinary locus $\cX_G^{(1-\ord)}$; the antiderivative $\dot\xi$ is only defined over the fully ordinary locus $\cX_G^{\ord}$, but this term has disappeared from our formula. So we can also interpret the pairing of \cref{prop:valuepairing} as a cup-product in the cohomology of $\cX_G^{(1-\ord)}$, which will allow us to compare with the construction of $p$-adic $L$-functions from \cite{grossiloefflerzerbesLfunct}.
\end{remark}


 \subsection{A partial unit root splitting}

  We saw above that over $\sX_H^{\ord}$ (the ordinary locus as a classical rigid space) the Hodge filtration of $\cV_H$ has a canonical splitting given by the unit-root subspace. For $X_G$ we have the more refined structure of a $\ZZ^2$-filtration, and we can ask for splittings of either factor. We state below the case of interest.

  \begin{proposition}
   Over $\sX_G^{(1-\ord)}$, the natural inclusion map $\omega^{(k_1, -k_2)} \into \cV_G / \Fil^{k_1 + 1}$ admits a canonical splitting.
  \end{proposition}

  \begin{proof}
   We have a projection map from $\cV_G$ onto $\cV_G / \Fil^{(0, 1)} \cV_G \cong \Sym^{k_1} \cW_G \boxtimes \omega^{-k_2}$. So it suffices to show that the filtration on $\Sym^{k_1} \cW_G$ is splittable over the 1-ordinary locus; but this is clear, since a complement is provided by the unit-root subspace for the operator $\varphi_1$.
  \end{proof}

  We now use the unit-root splitting to relate the pushforward maps $\iota_*^{[j]}$ on the cohomology of the de Rham sheaves to the pushforward maps on coherent cohomology defined in \cref{note:iota} above (which we write here as $\iota_{\coh, *}$, to distinguish them from the de Rham versions)

  \begin{notation}
   We use the notation of Section \ref{ss:ClebschGordan}, and we write
   \begin{itemize}
    \item $\cV_G=\Sym^{[k_1, k_2]}\cH(\cA)_{\Qp}$;
    \item $\cV_H=\Sym^{t}\cH(\cE)_{\Qp}$ for $0\leq j\leq \min\{k_1, k_2\}$.
   \end{itemize}
   for for vector bundles attached to $V_H$ and $V_G$, respectively.
  \end{notation}

  \begin{proposition}\label{prop:commdiagsheaves}
   Pushforward along $\iota^{[j]}$ induces a commutative diagram
   \[
   \begin{tikzcd}
    H^0(\sX_H^{\ord}, V_H/\Fil^{k_1-j+1}V_H)
    \rar["\iota^{[j]}_*"] &
    H^1(\sX^{(1-\ord)}_G, V_G/\Fil^{k_1+1}V_G\otimes\Omega^1\langle D \rangle) \\
    H^0(\sX_H^{\ord}, \Gr^{k_1-j}V_H)
    \rar \uar \ar[rd, "\tfrac{1}{j!} \iota_{\coh, *}" below] &
    H^1(\sX^{(1-\ord)}_G, \Gr^{k_1}V_G\otimes\Omega^1\langle D \rangle)
    \dar \uar \\
    & H^1(\sX^{(1-\ord)}_G, \omega^{k_1 + 2, -k_2}).
   \end{tikzcd}
   \]
  \end{proposition}

  \begin{proof}
   For the commutativity of the top square, we note that the Clebsch--Gordan map is compatible with filtrations and hence induces a commutative diagram of $B_H$-representations
   \[
   \begin{tikzcd}
    V_H/\Fil^{k_1-j}V_H
    \rar &
    V_G/\Fil^{k_1}V_G \\
    \Gr^{k_1-j}V_H
    \rar \uar &
    \Gr^{k_1}V_G.
    \uar
   \end{tikzcd}
   \]
   The identification of the diagonal map as $\tfrac{1}{j!} \iota_{\coh, *}$ arises similarly, using the explicit formula \eqref{eq:CGexplicit}.
  \end{proof}

  \begin{proposition}
   \label{prop:urdiag}
   The diagram in Proposition \ref{prop:commdiagsheaves} is compatible with the unit root splitting
   \[
    u_H : H^0(\sX_H^{\ord}, V_H/\Fil^{k_1-j+1}V_H)
    \longrightarrow H^0(\sX_H^{\ord}, \Gr^{k_1-j}V_H)
   \]
   and the partial unit root splitting
   \[
    u_G : H^1(\sX^{(1-\ord)}_G, V_G/\Fil^{k_1+1}V_G\otimes\, \Omega^1)
    \longrightarrow H^1(\sX^{(1-\ord)}_G, \omega^{k_1 + 2, -k_2}):
   \]
   we have a commutative diagram
   \[
   \begin{tikzcd}[column sep=large]
    H^0(\sX_H^{\ord}, V_H/\Fil^{k_1-j+1}V_H)
    \rar["\iota_*^{[j]}"]  \arrow[dashed, bend right=90, "u_H" left]{d}&[-1.5em]
    H^1(\sX^{(1-\ord)}_G, V_G/\Fil^{k_1+1}\otimes\, \Omega^1)
    \arrow[dashed, bend left=90, "u_G" right]{dd} \\
    H^0(\sX_H^{\ord}, \Gr^{k_1-j}V_H)
    \rar \uar \ar[rd, "\tfrac{1}{j!} \iota_{\coh, *}" below] &
    H^1(\sX^{(1-\ord)}_G, \Gr^{k_1}V_G\otimes\Omega^1)
    \uar \dar \\
    &
    H^1(\sX^{(1-\ord)}_G, \omega^{k_1 + 2, -k_2})
   \end{tikzcd}
   \]
  \end{proposition}

  \begin{proof}
   Over $\sX_H^{\ord}$, we have a canonical splitting of the Hodge filtration of $\cW_H$, as above. To prove the proposition, it is sufficient to check that the unit root splittings induce a commutative diagram
   \[
   \begin{tikzcd}
    H^0(\sX_H^{\ord}, V_H/\Fil^{k_1-j+1}V_H)
    \rar  \arrow[dashed, bend right=90]{d}&[-1.5em]
    H^0\left(\sX^{\ord}_H, \iota^*(V_G/\Fil^{k_1+1}\otimes\, \Omega^1)\right)
    \arrow[dashed, bend left=90]{dd}
    \\
    H^0(\sX_H^{\ord}, \Gr^{k_1-j}V_H)
    \rar \uar \arrow{rd} &
    H^0\left(\sX^{\ord}_H, \iota^*(\Gr^{k_1}V_G\otimes\Omega^1)\right)
    \uar \dar \\
    &
    H^0\left(\sX^{\ord}_H, \iota^*(\omega^{(k_1 + 2, -k_2)}\right)
   \end{tikzcd}
   \]
   which boils down to an explicit computation with the Clebsch--Gordan map using equation \eqref{eq:CGexplicit}.
  \end{proof}

  It follows from the classicity theorem of higher Hida theory proved in \cite{grossiloefflerzerbesLfunct} that the natural restriction map (forgetting overconvergence)
  \[
   H^1(\cX^{(1-\ord)}_G, \omega^{k_1 + 2, -k_2})
   \to H^1(\sX^{(1-\ord)}_G, \omega^{k_1 + 2, -k_2})
  \]
  is an isomorphism on the ordinary (slope 0) eigenspace for $U_{\pp_1}$. If $\Pi$ is ordinary at $\pp_1$ (and $\alpha_1$ is, necessarily, the unit root) then pairing with $\eta^{(1-\ord)}$ factors through this eigenspace, so we obtain the following:

  \begin{corollary}\label{cor:cohexplicit}
   Assume $\Pi$ is ordinary at $\pp_1$. Then the linear functional on $H^0(\sX_H^{\ord}, V_H/\Fil^{k_1-j+1}V_H)$ given by pairing with $(\iota^{[j], *})(\dot{\eta}^{\ord})$ factors through the unit root splitting into $:H^0(\sX_H^{\ord}, \Gr^{k_1-j}V_H)$, and it is given by
   \[
   \frac{k_1!\, k_2!}{j!}\cdot\, \left\langle\eta^{(1-\ord)}, \iota_{\coh, *}(-)\right\rangle_{\sX_G^{(1-\ord)}}. \qed
   \]
  \end{corollary}

  \begin{remark}
   The factors $k_i!$ arise from the pairing of the basis vectors $v^{k_i}$ and $w^{k_i}$ of $\Sym^{k_i} W_G$. 
  \end{remark}


\subsection{Relation to $p$-adic $L$-functions}

We use Corollary \ref{cor:cohexplicit} in order to relate the formula in Proposition \ref{prop:valuepairing} to values of $p$-adic $L$-functions. We assume henceforth that $\Pi$ is ordinary at $\pp_1$.

\begin{proposition}\label{prop:coheval}
 The pairing \eqref{eq:toevaluate} is given by
 \begin{multline*}
  \left\langle (\iota^{[j]})^*(\eta_{\fp}), \, \Eis^{t}_{\syn, N}\right\rangle=\\
  \frac{N^{(k_1 + k_2 - 2j)} (-1)^{k_2 - j + 1} \left(1-\tfrac{\beta_1}{p\alpha_1}\right)}
  {\left(1-\frac{p^j}{\alpha_1\alpha_2}\right)\left(1-\frac{p^j}{\alpha_1\beta_2}\right)}
  \binom{k_1}{j}\, k_2!\, \cdot
  \left(\eta^{\ord}\cup \iota_{\coh, *}(\theta^{(k_1-j)}E_{0, 1/N}^{-t, \ord})\right).
 \end{multline*}
\end{proposition}

Compare Corollary 6.5.7 in \cite{KLZ20}.

\begin{proof}
 We deduce from Proposition \ref{prop:Eisexplicit} that the image of $\epsilon_0^t$ under projection to $H^0(\cX^{\ord}_H, \Gr^{k_1-j}\cV_H)$ is given by $-N^k (-1)^{k_2-j}\frac{(t)!}{(k_1-j)!} \theta^{k_1 - j}(E^{-t, \ord}_{0, 1/N})$. Combining this with \cref{prop:valuepairing} and \cref{cor:cohexplicit} gives the result.
\end{proof}
%

In order to relate this formula to a (non-critical) value of a $p$-adic $L$-function, we need to replace
$E_{0, 1/N}^{-t, \ord}$
by its $p$-depletion
\[ E^{-t,[p]}_{0,1/N}=(1-\varphi \circ U_p)E^{-t, \ord}_{0,1/N}.\]
We adapt the argument from \cite[\S 6.5]{KLZ20}. Let $V_{\pp_2}=p^{-1-k_2}\langle \pp_2\rangle^{-1}\varphi_2$, so $V_{\pp_2}$ is a right inverse of $U_{\pp_2}$. Let $\lambda, \mu$ be constants such that
\[ U_{\pp_2}(\eta^{\ord})=\lambda\eta^{\ord}-\mu V_{\pp_2}(\eta^{\ord})\]
(explicitly, we have $\lambda=a_{\pp_2}(\cF)$ and $\mu=p^{1+k_2}$), and let $\gamma=p^{k_1-j}$.
Using an analogue of Lemma 6.5.8 from \emph{op.cit.}, we deduce the following result:

\begin{lemma}
 We have
 \[
 \iota^*_{\coh}(\eta^{\ord})\cup \theta^{k_1-j}E^{-t,[p]}_{0,1/N}=\left(1-\lambda\gamma p^{-k_1}\beta_{\pp_1}\cdot V_p+\mu\gamma^2(p^{-k_1}\beta_{\pp_1})^2V_p^2\right) \left( \iota^*_{\coh}(\eta^{\ord})\cup \theta^{k_1-j}E^{-t, \ord}_{0,1/N}\right).
 \]
\end{lemma}

\begin{proof}
 Arguing as in \S 6.5 in \emph{op.cit.}, we see that
 \begin{multline*}
  \iota^*_{\coh}(\eta^{\ord})\cup \theta^{k_1-j}E^{-t,[p]}_{0,1/N}=\left(1-\lambda\gamma  p^{-k_1}\beta_{\pp_1}\cdot V_p+\mu\gamma^2(p^{-k_1}\beta_{\pp_1})^2V_p^2\right)\left( \iota^*_{\coh}(\eta^{\ord})\cup \theta^{k_1-j}E^{-t, \ord}_{0,1/N}\right)\\
  +(1-V_pU_p)\left(\mu \varphi^2\iota^*_{\coh}(\eta^{\ord})\cup \theta^{k_1-j}E^{-t, \ord}_{0,1/N}-\iota^*_{\coh}(\eta^{\ord})\cup\varphi \theta^{k_1-j}E^{-t, \ord}_{0,1/N}\right).
 \end{multline*}
 But the operator $1-V_pU_p$ acts as the zero map, which proves the result.
\end{proof}

We deduce the following formula. We suppose that $p^{1 + j} \notin \{ \beta_1 \alpha_2, \beta_1 \beta_2\}$ (which is immediate unless $k_1 = k_2 = j$). Then

\begin{proposition}\label{prop:exp}
 If $p^{1 + j} \notin \{ \beta_1 \alpha_2, \beta_1 \beta_2\}$, then we have
 \begin{multline*}
  \left\langle (\iota^{[j]})^*(\eta_{\fp}), \, \Eis^{t}_{\syn, N}\right\rangle=\\
  \frac{N^{(k_1 + k_2 - 2j)} (-1)^{k_2 - j + 1} \binom{k_1}{j}\, k_2! \left(1-\tfrac{\beta_1}{p\alpha_1}\right)}
  {\left(1-\frac{p^j}{\alpha_1\alpha_2}\right)\left(1-\frac{p^j}{\alpha_1\beta_2}\right) \left(1-\tfrac{\beta_1\alpha_2}{p^{j+1}}\right) \left(1-\tfrac{\beta_1\beta_2}{p^{j+1}}\right)}
  \left(\iota_{\coh}^*(\eta^{\ord})\cup \theta^{k_1-j}E_{0, 1/N}^{-t, [p]}\right).
 \end{multline*}
\end{proposition}

We can also express this in terms of the class $\eta_{\Iw(\pp_1)}^{(1-m)}$ on the higher-level variety $\cX_{G, \Iw(\pp_1)}$, using the results of Sections \ref{ss:pfwd} and  \ref{ssect:1parampL}.
    We have
    \begin{align*}
    \iota_{\coh}^*(\eta^{\ord})\cup \theta^{k_1-j}E_{0, 1/N}^{-t, [p]}&= \eta^{(1-\ord)}\cup \iota_{\coh,*}\left(\theta^{k_1-j}E_{0, 1/N}^{-t, [p]}\right)\\
    &= \pi_{\pp_1,*}(\eta_{\Iw(\pp_1)}^{(1-m)})\cup \iota_{\coh,*}\left(\theta^{k_1-j}E_{0, 1/N}^{-t, [p]}\right)\\
    &=\eta_{\Iw(\pp_1)}^{(1-m)}\cup ( \pi_{\pp_1}^*\circ \iota_{\coh,*})\left(\theta^{k_1-j}E_{0, 1/N}^{-t, [p]}\right)\\
    &=\eta_{\Iw(\pp_1)}^{(1-m)}\cup  ( \iota^{(p)}_{\coh,*}\circ \pi_p^*\left(\theta^{k_1-j}E_{0, 1/N}^{-t, [p]}\right)\\
    &=\left\langle \eta_{\Iw(\pp_1)}^{(1-m)}, \,  (\iota^{(p)}_{\coh,*}\circ \pi_p^*)\, \theta^{k_1-j}E_{0, 1/N}^{-t, [p]}\right\rangle.
   \end{align*}
 So the formula in Proposition \ref{prop:exp} is equivalent to
\begin{multline*}
 \left\langle (\iota^{[j]})^*(\eta_{\fp}), \, \Eis^{t}_{\syn, N}\right\rangle=N^{(k_1 + k_2 - 2j)} (-1)^{k_2 - j + 1}\\
 \frac{\left(1-\tfrac{\beta_1}{p\alpha_1}\right)}
 {\left(1-\frac{p^j}{\alpha_1\alpha_2}\right)\left(1-\frac{p^j}{\alpha_1\beta_2}\right) \left(1-\tfrac{\beta_1\alpha_2}{p^{j+1}}\right) \left(1-\tfrac{\beta_1\beta_2}{p^{j+1}}\right)}
 \binom{k_1}{j}\, k_2!\, \cdot
 \left\langle\eta_{\Iw(\pp_1)}^{(1-m)}, \,  (\iota^{(p)}_{\coh,*}\circ \pi_p^*)\, \theta^{k_1-j}E_{0, 1/N}^{-t, [p]}\right\rangle.
\end{multline*}

Comparing the previous proposition with Definition \ref{def:AsaipL}, we hence deduce the first main theorem of this paper, which relates the Euler system classes for the geometric twists to non-critical values of the $p$-adic Asai $L$-function:

\begin{theorem} \label{thm:regulator}
 Let $0\leq j\leq \min\{k_1,k_2\}$, and let $t=k_1+k_2-2j \ge 0$. If $p^{1 + j} \notin \{ \alpha_1 \alpha_2, \alpha_1 \beta_2\}$, then we have
 \[
 \left\langle \eta_{\dR}, \, \log\left(\loc_p \AF_{\et}^{[\Pi, j]}\right)\right\rangle =
 \frac{(\sqrt{D})^{j + 1} (-1)^{(k_1 - k_2)/2 + 1} N^t \binom{k_1}{j}\, k_2!}{\left(1-\frac{p^j}{\alpha_1\alpha_2}\right) \left(1-\frac{p^j}{\alpha_1\beta_2}\right) \left(1-\tfrac{\beta_1\alpha_2}{p^{j+1}}\right)
  \left(1-\tfrac{\beta_1\beta_2}{p^{j+1}}\right)}
 \, \cdot L_{p, \As}^{\imp}\left(\Pi, 1 + j\right).\]
\end{theorem}

 Note that the $p$-adic $L$-values in the above formula are always outside the critical range. In the next section we shall establish a relation between critical twists of the Euler system and critical values of $L_{p, \As}^{\imp}$, using variation in Hida families.

 \begin{remark}
  In the awkward case $(k_1, k_2, j) = (k, k, k)$ for some $k \ge 0$, the factor $\left(1 - \tfrac{\beta_1 \alpha_2}{p^{j+1}}\middle)\middle(1 - \tfrac{\beta_1\beta_2}{p^{j+1}}\right)$ can indeed vanish. (In fact this \emph{always} occurs if $\Pi$ is a base-change of an elliptic modular form of trivial nebentype at $p$.)

  However, in this case we can recognise the right-hand side of \cref{prop:coheval} as a $p$-adic $L$-value in a somewhat different sense, using the ``improved $p$-adic $L$-function'' $L_{p, \As}^{\sharp}$ defined in \cite{LZ-improved}. This gives the formula
  \[
   \left\langle \eta_{\dR}, \, \log\left(\loc_p \AF_{\et}^{[\Pi, k]}\right)\right\rangle =
   \frac{-(\sqrt{D})^{k + 1} k!}{\left(1-\frac{p^k}{\alpha_1\alpha_2}\right) \left(1-\frac{p^k}{\alpha_1\beta_2}\right)}
   \, \cdot L_{p, \As}^{\sharp}\left(\Pi\right), \]
  where $L_{p, \As}^{\sharp}\left(\Pi\right)$ is related to $L_{p, \As}^{\imp}\left(\Pi, 0\right)$ by the formula
  \[ L_{p, \As}^{\imp}(\Pi, 0) = \left(1 - \tfrac{\beta_1 \alpha_2}{p^{k+1}}\middle)\middle(1 - \tfrac{\beta_1\beta_2}{p^{k+1}}\right) \cdot L_{p, \As}^{\sharp}(\Pi).\qedhere\]
 \end{remark}

\section{Hida families}

\subsection{Families of eigensystems}

Let us choose an ``initial'' weight $(k_1, k_2; w)$, with $k_i \ge 0$. We shall consider families of eigensystems around a neighbourhood of this weight. Let $E$ be a finite extension of $\Qp$.

\begin{notation}
 Let $\mathbb{T}$ denote the product of the prime-to-$\mathfrak{N}p$ Hecke algebra $\mathbb{T}^{(\mathfrak{N}p)}$ (with $E$-coefficients) and the polynomial ring in formal variables $U_{\pp_1}$, $U_{\pp_2}$ and their inverses.
\end{notation}

\begin{definition}
 For $i=1,2$ let $\cU_i \subset \cW =(\operatorname{Spf} \Lambda)^{\mathrm{rig}}_E$ be an open affinoid containing 0; and let
 \[\kappa_{\cU_i} : \Zp^\times \to \cO(\cU_i)\]
 be the universal weight. A \emph{$p$-adic family of eigensystems} over $\cU = \cU_1 \times \cU_2$ of weight $\left(k_1 + 2\kappa_{\cU_1}, k_2 + 2\kappa_{\cU_2}; w\right)$ and level $\mathfrak{N}$ is a homomorphism $\mathbb{T} \to \cO(\cU_1\times \cU_2)$ with the following property:
 \begin{itemize}
  \item For every $P = (a_1, a_2) \in \cU \cap \ZZ^2$ with $k_i + 2a_i \ge 0\ \forall i$, the composite homomorphism
  \[ \mathbb{T} \to \mathcal{O}(\cU) \to E\]
  given by evaluation at $P$ is the Hecke eigenvalue system associated to a $p$-stabilisation of $\Pi[P]$, for some cuspidal automorphic representation $\Pi[P]$ of $\GL_2 / F$ of level $\mathfrak{N}$ and weight $(k_1 + 2a, k_2+2a_2; w)$ and some embedding of its coefficient field into $E$.
 \end{itemize}
 We denote such a family by $\uPi$. We say that $\uPi$ is \emph{$p$-ordinary} if the Hecke eigenvalues of $U'_{\pp_1}$, $U_{\pp_2}$ and their inverses acting on $\uPi$ are power-bounded.
\end{definition}

By standard results due to Hida, we see that for any given $\Pi$ of weight $(k_1, k_2; w)$ and conductor $\mathfrak{N}$, with $k_1, k_2 \ge 0$, ordinary at $p$, we may find a $p$-ordinary family $\uPi$ over some sufficiently small $\cU_1\times \cU_2$ passing through $\Pi$ (i.e.~with $\Pi[0] = \Pi$), and this is unique up to further shrinking $\cU_1\times \cU_2$.

\subsection{Interpolation sets}

Let $\uPi$ be a family as above. We now define certain special sets of points in $\cU$ and in the product $\cU \times \cW$.

\begin{notation}
 We write $\Sigma_{\mathrm{cl}}$ for the set of $P = (a_1,a_2) \in \cU \cap \ZZ^2$ with $k_i + 2a_i \ge 0\, \forall i$, and we call these points \emph{classical points}.
\end{notation}

We shall also need to consider various loci in the 3-dimensional space $\cU \times \cW$:

\begin{notation}
 Let $(P, Q) \in \cU \times \cW$, with $P = (a_1, a_2) \in \Sigma_{\mathrm{cl}}$.
 \begin{itemize}
  \item We say $(P, Q)$ is \emph{geometric} if $Q = j$ is an integer with $0 \le j \le \min(k_1 + 2a_1, k_2 + 2a_2)$.
  \item We say $(P, Q)$ is \emph{1-dominant critical} if $Q = j + \chi$, with $\chi$ a finite-order character, and $j$ an integer such that $k_2 + 2a_2 < j \le k_1 + 2a_1$.
 \end{itemize}
 We write $\widetilde{\Sigma}_{\mathrm{geom}}$, respectively $\widetilde{\Sigma}_{\mathrm{crit}}$, for the sets of points $(P, Q)$ satisfying these conditions.
\end{notation}

\begin{remark}
 Note that $\widetilde{\Sigma}_{\mathrm{crit}}$ will be the locus where our $p$-adic $L$-function interpolates critical complex $L$-values, and $\widetilde{\Sigma}_{\mathrm{geom}}$ the locus where it interpolates regulators of Euler-system classes. Crucially, both loci are Zariski-dense in $\cU \times \cW$.

 However, while the intersection of $\widetilde{\Sigma}_{\mathrm{crit}}$ with $\cU_1 \times \{0\} \times \cW$ is Zariski-dense in this smaller space, this is \textbf{not} true for $\widetilde{\Sigma}_{\mathrm{geom}}$. This is why we need to introduce the second weight variable in order to prove our reciprocity law, rather than using the two-variable $p$-adic $L$-function (with $k_2$ fixed) defined in \cite{grossiloefflerzerbesLfunct}.
\end{remark}

\section{A three-variable $p$-adic Asai $L$-function}

We now prove a strengthening of the main result of \cite{grossiloefflerzerbesLfunct} (c.f. Definition \ref{def:AsaipL}) by constructing a 3-variable $p$-adic Asai $L$-function over $\cU_1 \times \cU_2 \times \cW$, for a Hida family over $\cU_1 \times \cU_2$.

\subsection{Geometry of the Iwahori-level Hilbert modular surface}

Let $\YY_{G, \Iw(p)}$ denote the canonical $\Zp$-model of $Y_{G, \Iw(p)}$, and write $Y_{G, \Iw(p),0}$ denote its special fibre. This has a stratification as a disjoint union of 9 smooth strata $Y_{G, \Iw(p),0}^{\diamondsuit, \heartsuit}$, where $\diamondsuit, \heartsuit\in\{m, \alpha, \et\}$. We use the notation
\[ Y_{G, \Iw(p),0}^{ \bullet,m} \coloneqq Y_{G, \Iw(p),0}^{m, m} \cup Y_{G, \Iw(p),0}^{m, \alpha} \cup Y_{G, \Iw(p),0}^{m, \et}\]
which is open in $Y_{G, \Iw(p), 0}$. This stratification extends naturally to $X_{G, \Iw(p), 0}$.

\begin{remark}
 There exists a finite covering of $\YY_{G, \Iw(p)}$ which is a moduli space for Hilbert--Blumenthal abelian surfaces $A$ over $\Zp$-algebras, equipped with level groups $C_1 \subset A[\pp_1]$ and $C_2 \subset A[\pp_2]$ (along with a prime-to-$p$ polarisation and level structure). The strata $Y_{G, \Iw,0}^{\diamondsuit, \heartsuit}$ correspond to the regions where $C_1$ is (\'etale-locally) of type $\diamondsuit$ and $C_2$ of type $\heartsuit$.
\end{remark}

\begin{note}
 \label{note:iotas}
 As in \cref{note:iota} above, there is a natural map $\iota_{\Iw(p)}:Y_{H, \Iw(p)} \to Y_{G, \Iw(p)}$ giving rise to pushforward maps in coherent cohomology.
\end{note}

\subsection{Summary of higher Hida theory}

 Let $\sX_{G, \Iw(p)}$ denote the generic fibre of $\XX_{G, \Iw(p)}$ as classical rigid analytic space (not a dagger space). We now state the classicality isomorphism of higher Hida theory at Iwahori level generalizing \cite[Corollary 2.3.6-Corollary 2.4.1]{grossiloefflerzerbesLfunct}.

 Recall that in \emph{op. cit.} we developed the theory at $\mathfrak{p}_1$-level: we constructed Hecke operators $U_{\pp_1}$ and $U_{\pp_1}'$ and certain integral models of the sheaves $\omega^{(k_1,k_2,w)}(-D_G), \omega^{(k_1,k_2,w)}$. After tensoring with $\mathbb{Q}_p$, Corollary 2.3.6 (respectively Corollary 2.4.1) in \emph{op. cit.} state that the $U_{\pp_1}$-ordinary part (the $U_{\pp_1}'$-ordinary part respectively) of the cohomology of the $\pp_1$-multiplicative locus (of the $\pp_1$-multiplicative locus with compact support towards the $\pp_1$-supersingular locus respectively) is isomorphic to the corresponding ordinary part of the cohomology of $\sX_{G, \Iw(\pp_1)}$ of the sheaves $\omega^{(k_1,k_2,w)}(-D_G), \omega^{(k_1,k_2,w)}$ when $k_1\geq 2$ (when $k_1\leq 0$ respectively). The theory applies \emph{verbatim} for $\pp_2$ and we can define similarly the Hecke operators $U_{\pp_2}$ and $U_{\pp_2}'$. We denote by $e_{\pp_i}$ and $e_{\pp_i}'$ the ordinary projectors for $U_{\pp_i}$ and $U_{\pp_i}'$ respectively.

 For $i=1,2$  let $D^{(i-\et)}$ be the Cartier divisor such that
 \[ X_{G, \Iw(p),0}-D^{(1-\et)} = X_{G, \Iw(p),0}^{\m, \bullet}\quad \text{ and}\quad  X_{G, \Iw(p),0}-D^{(2-\et)} = X_{G, \Iw(p),0}^{\bullet, \m}.\]

 \begin{definition}
  We define the following complex
  \begin{align*}
   M^\bullet_{\m, \m}(k_1,k_2)&= \RG_{w1}(\mathfrak{X}_{G, \Iw(p)}^{\m, \m}, \omega^{(-k_1,k_2+2,w)}(-D_G))\\
   &=\varprojlim_m \colim_{n_2} \varprojlim_{n_1} \RG(\XX_{G, \Iw(p),R/\wp^m}, \omega^{(-k_1,k_2+2,w)}(-D_G - n_1 D ^{(1-\et)}  +n_2 D^{(2-\et)}) ),
  \end{align*}
  and let
  $$H^1_{w1}(\mathfrak{X}_{G, \Iw(p)}^{\m, \m}, \omega^{(-k_1, k_2+2, w)}(-D_G))= H^1(M^\bullet_{\m, \m}(k_1,k_2)).$$
 \end{definition}

 We then obtain the following result.

 \begin{theorem}
  If $k_1,k_2\geq 0$, then we have an isomorphism
  \[
  e_{\pp_1}'e_{\pp_2}H^1_{w1}(\mathfrak{X}_{G, \Iw(p)}^{\m, \m}, \omega^{(-k_1,k_2+2,w)}(-D_G)) \cong
  e_{\pp_1}'e_{\pp_2}H^1(X_{G, \Iw(p)}, \tilde{\omega}^{-k_1,k_2+2}(-D_G)),
  \]
  for some coherent sheaf $\tilde{\omega}^{(-k_1,k_2+2)}(-D_G)$ isomorphic to $\omega^{(-k_1,k_2+2)}(-D_G)$ on the generic fibre of $X_{G, \Iw(p)}$.
 \end{theorem}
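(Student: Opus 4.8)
The plan is to deduce this two-prime classicality statement from the one-prime results of \cite{grossiloefflerzerbesLfunct}, namely the analogues of Corollary~2.3.6 (the $k\ge 2$, plain-support case) and Corollary~2.4.1 (the $k\le 0$, compact-support case), applied successively in the $\pp_2$- and then the $\pp_1$-direction. The structural input that makes this possible is that, near the ordinary locus, the $\pp_1$- and $\pp_2$-stratifications of $X_{G,\Iw,0}$ are \emph{transverse}: in the moduli description (Hilbert--Blumenthal abelian surfaces $A$ with $C_1\subset A[\pp_1]$, $C_2\subset A[\pp_2]$, and $A[p]=A[\pp_1]\oplus A[\pp_2]$), the type of $C_1$ and the type of $C_2$ vary independently. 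Thus $X^{\m,\m}_{G,\Iw,0}=X^{\m,\bullet}_{G,\Iw,0}\cap X^{\bullet,\m}_{G,\Iw,0}$, the divisor $D^{1-\et}$ is a union of $\pp_1$-type components which meet the $\pp_2$-strata transversally (and symmetrically for $D^{2-\et}$), and the Hecke correspondences defining $U_{\pp_1},U'_{\pp_1}$ act only in the $\pp_1$-direction, preserve $\mathfrak{X}_{G,\Iw}^{\bullet,\m}$, and commute with $U_{\pp_2},U'_{\pp_2}$ (which preserve $\mathfrak{X}_{G,\Iw}^{\m,\bullet}$); consequently $e'_{\pp_1}$ and $e_{\pp_2}$ commute. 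This is precisely the setting in which the $\pp_i$-higher Hida machinery of \emph{op.\ cit.} applies \emph{verbatim}, uniformly in the data coming from the other prime.

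First I would treat the $\pp_2$-direction. Since the $\pp_2$-weight component is $k_2+2\ge 2$, the analogue of Corollary~2.3.6 of \emph{op.\ cit.}, applied over the $\pp_1$-multiplicative formal subscheme and uniformly in the auxiliary divisor $n_1D^{1-\et}$ and the level $m$, gives a quasi-isomorphism, natural in $m$ and $n_1$,
\[ e_{\pp_2}\colim_{n_2}\RG\!\left(\mathbb{X}_{G,\Iw,R/\wp^m},\,\omega^{(-k_1,k_2+2,w)}(-D_G-n_1 D^{1-\et}+n_2 D^{2-\et})\right) \ \simeq\ e_{\pp_2}\RG\!\left(\mathbb{X}_{G,\Iw,R/\wp^m},\,\tilde\omega^{(-k_1,k_2+2,w)}(-D_G-n_1 D^{1-\et})\right), \]
where $\tilde\omega$ is the integral sheaf of \emph{op.\ cit.} (isomorphic to $\omega$ on the generic fibre) and the point is that the poles along $D^{2-\et}$ effect the passage from the $\pp_2$-multiplicative locus to the whole space. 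Passing to $\varprojlim_m\varprojlim_{n_1}$ identifies $e_{\pp_2}M^\bullet_{\m,\m}(k_1,k_2)$ with $e_{\pp_2}\varprojlim_m\varprojlim_{n_1}\RG(\mathbb{X}_{G,\Iw,R/\wp^m},\tilde\omega^{(-k_1,k_2+2,w)}(-D_G-n_1 D^{1-\et}))$, i.e.\ with the $w_1$-cohomology of $\mathfrak{X}_{G,\Iw}^{\m,\bullet}$ with compact support towards $D^{1-\et}$. The verifications here are that the $\pp_2$-construction is compatible with restriction to $\mathfrak{X}_{G,\Iw}^{\m,\bullet}$ and functorial in $n_1D^{1-\et}$ (immediate from transversality), and that $e_{\pp_2}$ makes the inner $\colim_{n_2}$ stabilise (finiteness of the ordinary part over the weight algebra, part of the statement being invoked).

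Next I would treat the $\pp_1$-direction. Now the relevant weight component is $-k_1\le 0$, the setting of the analogue of Corollary~2.4.1 of \emph{op.\ cit.}: the object just obtained is exactly compact-support-towards-$D^{1-\et}$ cohomology of $\mathfrak{X}_{G,\Iw}^{\m,\bullet}$, and applying that result --- over the $\pp_2$-structure, which is inert by transversality, and using that $e'_{\pp_1}$ commutes with $e_{\pp_2}$ and that the isomorphism is $U_{\pp_2}$-equivariant --- yields, after $e'_{\pp_1}$,
\[ e'_{\pp_1}e_{\pp_2}\,\varprojlim_m\varprojlim_{n_1}\RG\!\left(\mathbb{X}_{G,\Iw,R/\wp^m},\,\tilde\omega^{(-k_1,k_2+2,w)}(-D_G-n_1 D^{1-\et})\right) \ \simeq\ e'_{\pp_1}e_{\pp_2}\,\RG\!\left(X_{G,\Iw},\,\tilde\omega^{(-k_1,k_2+2)}(-D_G)\right), \]
where $\tilde\omega^{(-k_1,k_2+2)}(-D_G)$ is the integral model of \emph{op.\ cit.}, the integral modification being forced only by the weight $-k_1\le 0$ in the $\pp_1$-direction, so that it is isomorphic to $\omega^{(-k_1,k_2+2)}(-D_G)$ on the generic fibre of $X_{G,\Iw}$. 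Composing with the identification of the previous paragraph and taking $H^1$ (idempotents being exact) gives the asserted isomorphism.

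The substantive work, and the main obstacle, is in the two families of verifications above: that the two one-prime higher Hida constructions genuinely stack, i.e.\ that the $\pp_2$-construction goes through uniformly in the extra $\pp_1$-data with an ordinary part finite enough to make the limits and colimits behave, and symmetrically for $\pp_1$ with $e'_{\pp_1}$ applied after $e_{\pp_2}$. Concretely this requires re-examining the control and finiteness theorems of \emph{op.\ cit.} in the presence of the extra $\pp_2$-Iwahori level and the auxiliary divisor $n_1D^{1-\et}$, and checking compatibility of the degeneracy and trace maps defining $U_{\pp_2}$ (hence $e_{\pp_2}$) with restriction to the $\pp_1$-multiplicative formal subscheme. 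The transversality of the two stratifications --- ultimately a consequence of $A[p]=A[\pp_1]\oplus A[\pp_2]$ --- makes all of this plausible, but it is where the care is needed.
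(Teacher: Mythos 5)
Your proposal is correct and is essentially the paper's argument: the proof there likewise reduces to the one-prime higher Hida theory of GLZ23a, by showing that $U_{\pp_2}$ factors through the transition maps adding poles along $D^{2-\et}$ and $U'_{\pp_1}$ factors through those adding zeros along $D^{1-\et}$, uniformly in the auxiliary twist coming from the other prime, so that after applying $e_{\pp_1}'e_{\pp_2}$ (and using base change) all transition maps in the double (co)limit defining $M^\bullet_{\m,\m}(k_1,k_2)$ become quasi-isomorphisms. The only presentational difference is that the paper treats both primes simultaneously via these two factorization diagrams (also deducing that the order of the limits in $n_1,n_2$ is immaterial), rather than stacking the one-prime corollaries sequentially --- the uniformity in the other prime's data that you flag as the substantive work is exactly what those diagrams establish.
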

 \begin{proof}
  The proof follows as in \cite{grossiloefflerzerbesLfunct}. In particular, one shows that the Hecke operators factor as follows:
  \[
  \begin{tikzcd}
   \RG(X, \omega_{n_1}(n_2t_2D^{(2-\et)}))\arrow{r}{}\arrow{d}{U_{\pp_2}} &\RG(X, \omega_{n_1}((n_2+1)t_2D^{(2-\et)}))\arrow{d}{U_{\pp_2}}\arrow[dashed]{dl}{U_{\pp_2}}\\
   \RG(X, \omega_{n_1}(n_2t_2D^{(2-\et)}))\arrow{r}{} &\RG(X, \omega_{n_1}((n_2+1)t_2D^{(2-\et)})),
  \end{tikzcd}
  \]
  for some $t_2\in \ZZ$ (see \cite[Lemma 2.3.3 and Definition 2.3.4]{grossiloefflerzerbesLfunct}) and where $\omega_{n_1}=\omega^{(-k_1,k_2+2,w)}(-D_G-n_1D^{(1-\et)})$. And
  \[
  \begin{tikzcd}
   \RG(X, \omega_{n_2}(-(n_1+1)t_1D^{(1-\et)}))\arrow{r}{}\arrow{d}{U'_{\pp_1}} &\RG(X, \omega_{n_2}(-n_1t_1D^{(1-\et)}))\arrow{d}{U'_{\pp_1}}\arrow[dashed]{dl}{U'_{\pp_1}}\\
   \RG(X, \omega_{n_2}(-(n_1+1)t_1D^{(1-\et)}))\arrow{r}{} &\RG(X, \omega_{n_2}(-n_1t_1D^{(1-\et)})),
  \end{tikzcd}
  \]
  for some $t_1\in \ZZ$ and where $\omega_{n_2}=\omega^{(-k_1,k_2+2,w)}(-D_G+n_2D^{(2-\et)})$. Using the base change formula as in \emph{op. cit.}, we hence deduce that the maps in the direct and inverse limits defining $M^\bullet_{\m, \m}(k_1,k_2)$ are quasi-isomorphisms, from which the theorem follows (after fixing a certain choice of twist of the sheaf depending on $t_1,t_2$).
  Morevoer note that the proof also yields the fact that the cohomology of the complex $M^\bullet_{\m, \m}(k_1,k_2)$ is independent on the order for which we take the inverse and direct limit for $n_1,n_2$.
 \end{proof}

 Let $\Lambda=\Lambda(\ZZ_p^\times)$, and denote by $\kappa_1, \kappa_2:\ZZ_p^\times\to (\Lambda^\times)^2$ the universal characters associated to the two factors of $(\ZZ_p^\times)^2$.
 We consider the following sheaf
 \begin{equation}
  \tilde{\Omega}^{(\kappa_1, \kappa_2)} := \left(\left(\pi_{\star}\cO_{\mathfrak{IG}} \otimes (\pi_{\mathfrak{IG}\times \mathfrak{IG}^{\vee}})_{\star}\cO_{\mathfrak{IG}\times \mathfrak{IG}^{\vee}}\right) \hat{\otimes} \Lambda^2\right)^{(\ZZ_p^\times)^6},
 \end{equation}
 where $\mathfrak{IG}$ denotes the Igusa tower over the (formal completion of the) multiplicative locus and each copy $(\ZZ_p^{\times})^2$ acts on $\pi_{\star}\cO_{\mathfrak{IG}}$ via the action on $\mathfrak{IG}=\mathfrak{IG}(\pp_1^{\infty})\times \mathfrak{IG}(\pp_2^{\infty})$ as in \cite[Definition 4.2.2]{grossi21}. Moreover the action of
 $((x_{\pp_i})_{i},(z_{\pp_i})_{i},(t_{\pp_i})_{i})\in(\ZZ_p^\times)^6$ is given on $\Lambda^2$ by
 $$\kappa_1(x_{\pp_1}^{-2}t_{\pp_1})\kappa_2(x_{\pp_2}^2t_{\pp_2}^{-1}).$$
 We remark that here we do not vary the \emph{norm variable} $w$ as in \cite{grossi21}. In particular, the specialisation $\tilde{\Omega}^{(\kappa_1, \kappa_2)}[P]$ of $\tilde{\Omega}^{(\kappa_1, \kappa_2)}$ at an integer point $P=(a_1,a_2)$ is isomorphic to $\omega^{(- 2a_1, 2a_2 ; 0)}\big|_{\mathfrak{X}_{G, \Iw}^{\m, \m}}.$

 \begin{remark}
  Note that this sheaf of $\Lambda$-modules is defined over the moduli space, but it can be descended to a sheaf on the (multiplicative loci of) the Shimura variety.
  We have an action of $x\in \cO_{F,+}^{\times}$ on $\tilde{\Omega}^{(\kappa_1, \kappa_2)}$ via the tautological isomorphism (since the construction of $\tilde{\Omega}^{(\kappa_1, \kappa_2)}$ does not use the polarisation)
  \[
  x^*\tilde{\Omega}^{(\kappa_1, \kappa_2)}\simeq \tilde{\Omega}^{(\kappa_1, \kappa_2)}.
  \]
  This action factors through the quotient of $\cO_{F,+}^{\times}$ by $(K\cap\cO_{F}^\times)^2$ (c.f. \cite[4.2.2]{grossi21}).
 \end{remark}

 Finally, for our fixed choice of weight $(-k_1,k_2+2,w)$ as above, we let
 \begin{equation}
  {\Omega}^{(\kappa_1, \kappa_2)} :=\tilde{\Omega}^{(\kappa_1, \kappa_2)}\otimes_{\mathcal{O}_{\mathfrak{X}_{G, \Iw}^{\m, \m}}} \omega^{(-k_1,k_2+2,w)}.
 \end{equation}

 The sheaf $\Omega^{(\kappa_1, \kappa_2)}$ has the following interpolation property:

 \begin{proposition}
  The sheaf $\Omega^{(\kappa_1, \kappa_2)}$ is an invertible sheaf of $\cO(\mathfrak{X}_{G, \Iw}^{\m, \m})\otimes\Lambda^2$-modules, and its specialisation at an integer point $P=(a_1, a_2)$ is given by
  \[ \Omega^{(\kappa_1, \kappa_2)}[P] = \omega^{(-k_1 - 2a_1, k_2 + 2a_2 + 2; w)}\big|_{\mathfrak{X}_{G, \Iw}^{\m, \m}}. \]
 \end{proposition}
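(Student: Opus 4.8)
The plan is to reduce the proposition to two inputs: that the auxiliary sheaf $\tilde\Omega^{(\kappa_1,\kappa_2)}$ is an invertible sheaf of $\cO_{\mathfrak{X}_{G,\Iw}^{\m,\m}}\hat\otimes\Lambda^2$-modules, and that (as already recorded above) its specialisation at an integer point $P=(a_1,a_2)$ is $\omega^{(-2a_1,2a_2;0)}\big|_{\mathfrak{X}_{G,\Iw}^{\m,\m}}$. Both are the exact analogues, with the norm variable $w$ suppressed, of the constructions of \cite{grossi21}. For the invertibility, the key geometric fact is that over the multiplicative locus each Igusa tower $\mathfrak{IG}(\pp_i^\infty)$, and each tower occurring in $\mathfrak{IG}\times\mathfrak{IG}^\vee$, is a pro-(finite flat) $\Zp^\times$-torsor. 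For any such torsor $\pi\colon\mathfrak{T}\to\mathfrak{X}_{G,\Iw}^{\m,\m}$, the completed invariants $\big(\pi_\star\cO_{\mathfrak{T}}\,\hat\otimes\,\Lambda\big)^{\Zp^\times}$, with $\Zp^\times$ acting diagonally through the torsor action and through the universal character on $\Lambda$, form an invertible $\cO\,\hat\otimes\,\Lambda$-module — the standard local computation (Andreatta--Iovita--Pilloni; in this setting \cite[\S 4.2]{grossi21}). Assembling the six torsor factors and taking $(\Zp^\times)^6$-invariants against the prescribed character action $\kappa_1(x_{\pp_1}^{-2}t_{\pp_1})\kappa_2(x_{\pp_2}^2t_{\pp_2}^{-1})$ then produces an invertible $\cO\,\hat\otimes\,\Lambda^2$-module, and its descent from the Hilbert--Blumenthal moduli space to $\mathfrak{X}_{G,\Iw}^{\m,\m}$ is legitimate because the tautological $\cO_{F,+}^{\times}$-action factors through the finite quotient by $(K\cap\cO_{F}^\times)^2$ and is compatible with the $\Lambda^2$-structure (preceding Remark). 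I would also note, in passing, how the specialisation statement comes out: after base-changing along $\Lambda^2\to E$, $\kappa_i\mapsto(x\mapsto x^{a_i})$, and unwinding the character action, one is left with the classical Igusa-tower description of $\omega^\mu$ over the multiplicative locus.

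The rest is formal. Since $\omega^{(-k_1,k_2+2;w)}$ is an invertible $\cO_{\mathfrak{X}_{G,\Iw}}$-module, so is its restriction to $\mathfrak{X}_{G,\Iw}^{\m,\m}$, and tensoring an invertible $\cO_{\mathfrak{X}_{G,\Iw}^{\m,\m}}\hat\otimes\Lambda^2$-module by an invertible $\cO_{\mathfrak{X}_{G,\Iw}^{\m,\m}}$-module again gives an invertible $\cO_{\mathfrak{X}_{G,\Iw}^{\m,\m}}\hat\otimes\Lambda^2$-module; hence $\Omega^{(\kappa_1,\kappa_2)}=\tilde\Omega^{(\kappa_1,\kappa_2)}\otimes_{\cO}\omega^{(-k_1,k_2+2;w)}$ is invertible over $\cO(\mathfrak{X}_{G,\Iw}^{\m,\m})\otimes\Lambda^2$. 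For the specialisation, tensoring by the fixed line bundle $\omega^{(-k_1,k_2+2;w)}$ commutes with the base-change $\Lambda^2\to E$ at $P$, so $\Omega^{(\kappa_1,\kappa_2)}[P]\cong\tilde\Omega^{(\kappa_1,\kappa_2)}[P]\otimes_{\cO}\omega^{(-k_1,k_2+2;w)}\cong\omega^{(-2a_1,2a_2;0)}\otimes_{\cO}\omega^{(-k_1,k_2+2;w)}$ on $\mathfrak{X}_{G,\Iw}^{\m,\m}$. Finally, the abstract line bundles $\omega^{\mu}$ depend only on $\mu=(k_1,k_2)$ and not on $w$ (Note after the definition of weights) and are multiplicative in $\mu$, $\omega^{\mu}\otimes_{\cO}\omega^{\mu'}\cong\omega^{\mu+\mu'}$, since $\omega^{(k_1,k_2)}$ is assembled from the two automorphic line bundles attached to the archimedean places of $F$; so the weights add, $(-2a_1,2a_2)+(-k_1,k_2+2)=(-k_1-2a_1,k_2+2a_2+2)$, giving $\omega^{(-k_1-2a_1,k_2+2a_2+2;w)}\big|_{\mathfrak{X}_{G,\Iw}^{\m,\m}}$ as claimed.

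The only step I expect to require real work is the invertibility of $\tilde\Omega^{(\kappa_1,\kappa_2)}$ — that the $(\Zp^\times)^6$-invariants of the completed pushforwards from the Igusa towers form a \emph{line} bundle over $\cO\,\hat\otimes\,\Lambda^2$ rather than merely a coherent sheaf. This rests on the pro-(finite flat) $\Zp^\times$-torsor structure of the Igusa towers over the multiplicative locus and on careful bookkeeping of the six character variables, and is (up to suppressing the norm variable $w$) exactly the content of the relevant construction of \cite{grossi21}; in the write-up I would quote that and indicate the minor modification.
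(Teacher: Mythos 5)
Your argument is correct and matches what the paper intends: the paper gives no written proof, relying exactly on the invertibility and interpolation property of $\tilde\Omega^{(\kappa_1,\kappa_2)}$ from \cite{grossi21} (stated just before the proposition) and the formal twist by the line bundle $\omega^{(-k_1,k_2+2;w)}$ with additivity of weights. Your write-up is a faithful unpacking of that same route, with the real content correctly located in the Igusa-tower torsor construction of \emph{op.\ cit.}
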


 We can now define Hecke operators $U_{\pp_1}'^{(\kappa_1, \kappa_2)}$ and $U_{\pp_2}^{(\kappa_1, \kappa_2)}$ acting on the cohomology of the sheaf $\Omega^{(\kappa_1, \kappa_2)}$ and specialising  at  an integer point $P=(a_1, a_2)$ to $U'_{\pp_1}$ and $U_{\pp_2}$ acting on the cohomology of $\omega^{(-k_1 - 2a_1, k_2 + 2a_2 + 2; w)}$ (under the isomorphism of the above proposition).
 The operator $U_{\pp_2}^{(\kappa_1, \kappa_2)}$ is defined as in \cite[Definition 3.2.1]{grossiloefflerzerbesLfunct}, namely considering the pullback induced by the universal étale $\pp_2$-isogeny (as in (3.4) of \emph{op. cit.}) and tensoring it with the trace map for $p_1$ and multiplying by the normalisation factor $p^{-1-\tfrac{w-k_2-2}{2}}$.
 The operator $U_{\pp_1}'^{(\kappa_1, \kappa_2)}$ is defined similarly but considering the pullback by the dual $\pp_1$-isogeny (the pullback on the Igusa tower of the map considered in \cite[$\S$ 2.4]{grossiloefflerzerbesLfunct}), tensoring it with the trace map for $p_2$ and multiplying by the normalisation factor $p^{-\tfrac{w-k_1}{2}}$. The fact that they specialise to the correct Hecke operators follows as in \cite[Proposition 3.2.2]{grossiloefflerzerbesLfunct}.

 By abuse of notation, we will use $e_{\pp_1}',e_{\pp_2}$ for the ordinary part with respect to $U_{\pp_1}'^{(\kappa_1, \kappa_2)}$ and $U_{\pp_2}^{(\kappa_1, \kappa_2)}$.

 \begin{definition}
  Define
  \[
  M^{\bullet}_{\m, \m}(\kappa_1, \kappa_2) \coloneqq e_{\pp_1}'e_{\pp_2} R\Gamma_{w_1}(\mathfrak{X}_{G, \Iw(p)}^{\m, \m}, \Omega^{(\kappa_1, \kappa_2)}(-D_G)).
  \]
 \end{definition}

 The complex $M^{\bullet}_{\m, \m}(\kappa_1, \kappa_2) $ is a perfect complex of $\Lambda^2$-modules, and it has the following interpolation property (c.f. \cite[Theorem 3.2.3]{grossiloefflerzerbesLfunct}, \cite[Theorem 4.2.13]{grossi21}):

 \begin{proposition}
  For $a_1,a_2\in\ZZ$ with $k_1+2a_1,k_2+2a_2\geq 0$, we have
  \[
  M^{\bullet}_{\m, \m}(\kappa_1, \kappa_2)\otimes^{\mathbb{L}}_{\Lambda^2,a_1,a_2}R\simeq e_{\pp_1}'e_{\pp_2}R\Gamma(X_{G, \Iw(p)}, \tilde{\omega}^{-k_1-2a_1,k_2+2a_2+2}(-D_G)).
  \]
 \end{proposition}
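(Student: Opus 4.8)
The plan is to deduce the $\otimes^{\mathbb{L}}$-specialisation statement by combining the two results already at our disposal: first, the interpolation property of the sheaf $\Omega^{(\kappa_1,\kappa_2)}$ (the proposition immediately preceding the definition of the big complex), which says that its derived fibre at an integer point $P=(a_1,a_2)$ is $\omega^{(-k_1-2a_1,k_2+2a_2+2;w)}|_{\mathfrak{X}^{\m,\m}_{G,\Iw}}$; and second, the classicality theorem of higher Hida theory proved above, which identifies $e'_{\pp_1}e_{\pp_2}H^1_{w_1}(\mathfrak{X}^{\m,\m}_{G,\Iw},\omega^{(-k_1,k_2+2,w)}(-D_G))$ with $e'_{\pp_1}e_{\pp_2}H^1(X_{G,\Iw},\tilde\omega^{-k_1,k_2+2}(-D_G))$. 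First I would check that $M^{\bullet}_{\m,\m}(\kappa_1,\kappa_2)$ is a perfect complex of $\Lambda^2$-modules: the cohomology of $\mathfrak{X}^{\m,\m}_{G,\Iw}$ (a quasi-Stein, or affinoid-after-a-finite-cover, rigid space) with coefficients in an invertible sheaf of $\cO\hat\otimes\Lambda^2$-modules is computed by a bounded complex of flat $\Lambda^2$-modules, and applying the idempotent $e'_{\pp_1}e_{\pp_2}$ (which exists because the relevant $U$-operators act compactly / with the power-bounded normalisation) cuts out a direct summand which is perfect; here the freeness of $\Omega^{(\kappa_1,\kappa_2)}$ over $\cO\hat\otimes\Lambda^2$ is exactly what makes the flatness work.

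The key step is then a base-change compatibility: one wants $e'_{\pp_1}e_{\pp_2}R\Gamma_{w_1}(\mathfrak{X}^{\m,\m}_{G,\Iw},\Omega^{(\kappa_1,\kappa_2)}(-D_G))\otimes^{\mathbb{L}}_{\Lambda^2,a_1,a_2}R$ to agree with $e'_{\pp_1}e_{\pp_2}R\Gamma_{w_1}(\mathfrak{X}^{\m,\m}_{G,\Iw},\Omega^{(\kappa_1,\kappa_2)}[P](-D_G))$. This is where the flatness of $\Omega^{(\kappa_1,\kappa_2)}$ over $\Lambda^2$ and the cohomology-and-base-change formalism for (formal/rigid) coherent cohomology (as used in \cite{grossiloefflerzerbesLfunct} and \cite{grossi21}) enter; since taking cohomology of an invertible sheaf on this (affinoid-ish) space commutes with the flat base change $-\otimes_{\Lambda^2,a_1,a_2}R$, and since $e'_{\pp_1}e_{\pp_2}$ is compatible with this base change (the Hecke operators $U_{\pp_1}'^{(\kappa_1,\kappa_2)},U_{\pp_2}^{(\kappa_1,\kappa_2)}$ specialise to $U'_{\pp_1},U_{\pp_2}$ by construction), we get that the left-hand side is $e'_{\pp_1}e_{\pp_2}R\Gamma_{w_1}(\mathfrak{X}^{\m,\m}_{G,\Iw},\omega^{(-k_1-2a_1,k_2+2a_2+2;w)}(-D_G))$. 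Finally I would invoke the classicality theorem above — applied with $(k_1,k_2)$ replaced by $(k_1+2a_1,k_2+2a_2)$, which is legitimate since $k_i+2a_i\ge 0$ — to rewrite this as $e'_{\pp_1}e_{\pp_2}R\Gamma(X_{G,\Iw},\tilde\omega^{-k_1-2a_1,k_2+2a_2+2}(-D_G))$, which is the asserted right-hand side. (The theorem as stated is about $H^1$, but since both sides are concentrated in a single degree after applying the ordinary projectors — the multiplicative locus has cohomological dimension controlled so that only $H^1_{w_1}$ survives in the ordinary part — the isomorphism of $H^1$'s upgrades to a quasi-isomorphism of complexes.)

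The main obstacle I expect is the bookkeeping around the interaction of the idempotents $e'_{\pp_1},e_{\pp_2}$ with the derived base change and with the inverse/direct limits built into $R\Gamma_{w_1}(\mathfrak{X}^{\m,\m}_{G,\Iw},-)$: one must know that $e'_{\pp_1}e_{\pp_2}$ commutes with $\limm_m$, $\colim_{n_2}$, $\limm_{n_1}$ (which follows from the quasi-isomorphism statement in the proof of the classicality theorem, namely that the transition maps are quasi-isomorphisms on ordinary parts), and that the resulting perfect complex's formation is unaffected by the order of the limits — precisely the final remark in that proof. A secondary subtlety is ensuring that the normalisation factors $p^{-1-(w-k_2-2)/2}$ and $p^{-(w-k_1)/2}$ built into $U_{\pp_2}^{(\kappa_1,\kappa_2)}$, $U_{\pp_1}'^{(\kappa_1,\kappa_2)}$ are $p$-adically bounded along $\cU$ so that the ordinary projectors genuinely exist over $\Lambda^2$; this is handled exactly as in \cite[Proposition 3.2.2]{grossiloefflerzerbesLfunct} and is why the weight $w$ is held fixed rather than varied.
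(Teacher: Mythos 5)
Your plan is correct and is essentially the intended argument: the paper gives no proof of this proposition, citing instead \cite[Theorem 3.2.3]{grossiloefflerzerbesLfunct} and \cite[Theorem 4.2.13]{grossi21}, whose arguments are precisely the combination you describe — perfectness of the ordinary-projected complex, cohomology-and-base-change via the pointwise interpolation property of $\Omega^{(\kappa_1,\kappa_2)}$, and then the classicality theorem applied at the shifted weight $(k_1+2a_1,k_2+2a_2)$, legitimate since $k_i+2a_i\ge 0$. Your identification of the delicate points (commuting $e_{\pp_1}'e_{\pp_2}$ with the limits defining $R\Gamma_{w_1}$, which rests on the quasi-isomorphism statement in the classicality proof, and the boundedness of the normalisation factors with $w$ held fixed) matches where the cited proofs do the real work.
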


 We have a similar picture over $\mathfrak{X}^{m}_{H, \Iw(p)}$: write $\lambda:\ZZ_p^\times\to \Lambda^\times$ for the universal character, and let  $k$ be any weight. Then there exists a sheaf $\Omega_H=\underline{\omega}_H(\lambda)$ of $\cO(\mathfrak{X}^{(m)}_{H, \Iw(p)})\otimes\Lambda$-modules whose specialisation at an integer point $a$ is given by
 \[ \Omega_H(a)=\omega^{k+2a}.\]

 Then choosing $\lambda=\kappa_2-\kappa_1$, pullback along $\iota_{\Iw}$ induces a morphism
 \begin{equation}
  \label{eq:pullback}
  \iota^*_{\Iw}:\,R\Gamma_{w_1}\left(\mathfrak{X}^{(m,m)}_{G, \Iw(p)}, \Omega_G(\kappa_1, \kappa_2)\,(-D_G)\right)\to R\Gamma_{c}\left(\mathfrak{X}^{m}_{H, \Iw}, \Omega_H(\kappa_2-\kappa_1)\otimes\Omega_H^1\,(-D_H)\right)\otimes \|\cdot\|^{-w}.
 \end{equation}


 \subsection{P-adic families}

 \begin{proposition}\label{thm:families}
  Let $\uPi$ be a $p$-ordinary family over $\cU_1\times \cU_2$, and define
  \[ M^\bullet_{\m, \m}(\kappa_{\cU_1}, \kappa_{\cU_2}) \coloneqq M^\bullet_{\m, \m}(\kappa_1, \kappa_2)  \otimes_{\Lambda^2} \cO(\cU_1\times \cU_2). \]
  Then $H^1$ of this complex contains a free rank 1 $\cO(\cU_1\times \cU_2)$-direct summand $H^1_{\m, \m}(\kappa_{\cU_1}, \kappa_{\cU_2})[\uPi]$ whose specialisation at every $(a_1,a_2) \in (\cU_1\times \cU_2) \cap \ZZ^2$ with $k_i + 2a_i\geq 0$ is the eigenspace in
  \[ e_{\pp_1}'e_{\pp_2} H^1\left(X_{G, \Iw(p)}, \tilde{\omega}^{(-k_1-2a_1, k_2+2a_2+2)}(-D_G)\right) \otimes_{\Zp} E \]
  on which $\mathbb{T}$ acts via the ordinary $p$-stabilisation of $\Pi[a]$.
 \end{proposition}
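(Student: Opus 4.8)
The plan is to deduce this from the interpolation property of the perfect complex $M^{\bullet}_{\m,\m}(\kappa_1,\kappa_2)$ of $\Lambda^2$-modules established above, by a standard localisation-and-specialisation argument combined with Hida's control theory. First I would base-change the whole picture along $\Lambda^2 \to \cO(\cU_1\times\cU_2)$, which is flat (the $\cU_i$ are affinoid subdomains of weight space), so that $M^{\bullet}_{\m,\m}(\kappa_{\cU_1},\kappa_{\cU_2})$ remains a perfect complex of $\cO(\cU_1\times\cU_2)$-modules whose derived specialisation at each integer point $(a_1,a_2)$ with $k_i+2a_i\ge 0$ recovers $e'_{\pp_1}e_{\pp_2}R\Gamma(X_{G,\Iw},\tilde\omega^{(-k_1-2a_1,k_2+2a_2+2)}(-D_G))\otimes_{\Zp}E$ (using the previous proposition and the fact that $E$ is flat over $\Zp$). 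By the classicality theorem above, for $k_1,k_2\ge 0$ this is concentrated in degree $1$ at the initial weight; shrinking $\cU_1\times\cU_2$ if necessary, ordinarity is preserved in the family, so $M^{\bullet}_{\m,\m}(\kappa_{\cU_1},\kappa_{\cU_2})$ has cohomology concentrated in degree $1$ and $H^1$ is a finitely generated $\cO(\cU_1\times\cU_2)$-module which is (after further shrinking) locally free, and whose formation commutes with specialisation at all integer points in the range.

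Next I would cut out the $\underline\Pi$-part. The Hecke algebra $\mathbb{T}$ acts on $H^1_{\m,\m}(\kappa_{\cU_1},\kappa_{\cU_2})$, and the family $\underline\Pi$ corresponds to a homomorphism $\mathbb{T}\to\cO(\cU_1\times\cU_2)$; let $\fm_{\underline\Pi}$ be the kernel. Since $\underline\Pi$ is $p$-ordinary and the residual representation is (generically on $\cU$) irreducible with the big-image hypothesis, the localisation of $H^1$ at the corresponding maximal ideal of the Hida Hecke algebra is, after inverting $p$ and possibly shrinking, free of rank $1$ over $\cO(\cU_1\times\cU_2)$ with $\mathbb{T}$ acting through $\lambda_{\underline\Pi}$; concretely one takes $H^1_{\m,\m}(\kappa_{\cU_1},\kappa_{\cU_2})[\underline\Pi]$ to be the generalised eigenspace, which by the multiplicity-one statement for the new vector of $\Pi[P]$ at each classical $P$ (the one-dimensionality of $\cS_{w_1,\Iw}(\Pi[P])$ recorded earlier) has one-dimensional fibres at a Zariski-dense set of points. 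A coherent-sheaf argument (a finitely generated torsion-free module over the regular ring $\cO(\cU_1\times\cU_2)$ whose fibres are generically $1$-dimensional is, after shrinking to a smaller affinoid, free of rank $1$) then gives that $H^1_{\m,\m}(\kappa_{\cU_1},\kappa_{\cU_2})[\underline\Pi]$ is a free rank $1$ direct summand. The direct-summand property follows because the ordinary projector $e'_{\pp_1}e_{\pp_2}$ splits off a saturated submodule, and within it the $\lambda_{\underline\Pi}$-isotypic part is cut out by an idempotent of $\mathbb{T}\otimes\operatorname{Frac}\cO(\cU)$ which, after shrinking, is already defined over $\cO(\cU)$ thanks to the smoothness of the branch of the Hida family through $\Pi$.

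Finally, for the interpolation statement at integer points: specialising at $(a_1,a_2)$ commutes with taking the complex's cohomology (degree-$1$ concentration, established above) and commutes with the $\mathbb{T}$-action and with the idempotent cutting out $\underline\Pi$ (since that idempotent is defined over $\cO(\cU)$ and specialises to the idempotent cutting out $\Pi[a]$); hence the fibre of $H^1_{\m,\m}(\kappa_{\cU_1},\kappa_{\cU_2})[\underline\Pi]$ at $(a_1,a_2)$ is exactly the $\lambda_{\Pi[a]}$-eigenspace in $e'_{\pp_1}e_{\pp_2}H^1(X_{G,\Iw},\tilde\omega^{(-k_1-2a_1,k_2+2a_2+2)}(-D_G))\otimes_{\Zp}E$, which is the eigenspace on which $\mathbb{T}$ acts via the ordinary $p$-stabilisation of $\Pi[a]$. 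I expect the main obstacle to be the freeness/direct-summand claim: one must show that after shrinking $\cU_1\times\cU_2$ the relevant localisation of $H^1$ is genuinely free of rank $1$ (not merely generically so) and splits off as a direct summand — this requires controlling the Hida Hecke algebra along the branch through $\Pi$ (Gorenstein-ness, or at least that the branch is a smooth rigid curve in $\cU$, which is where the big-image hypothesis enters) and a careful application of the control theorem to guarantee that no extra cohomology appears in the family; the remaining steps are formal consequences of flat base change and the already-proved interpolation of the complex.
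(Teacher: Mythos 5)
The paper itself records this proposition without a written proof, treating it as a standard consequence of the perfectness and interpolation property of $M^\bullet_{\m,\m}(\kappa_1,\kappa_2)$ just established; your sketch (flat base change along $\Lambda^2\to\cO(\cU_1\times\cU_2)$, control at classical points, multiplicity one at a dense set of specialisations, shrinking $\cU$ to split off a free rank-one eigensummand) is exactly that intended argument, and is broadly sound. Two caveats. First, the control step is where the real content lies: the classicality theorem quoted above only identifies the ordinary $H^1$ at a fixed weight, and does not by itself give concentration of the complex in degree $1$. To make $H^1$ commute with specialisation you need vanishing of the relevant ordinary $H^0$ and $H^2$ (an input from higher Hida theory, cf.\ \cite{grossi21}), or else localise the perfect complex at the $\uPi$-eigensystem and use that the cuspidal part of $\omega^{(-k_1-2a_1,\,k_2+2a_2+2)}(-D_G)$ contributes only in degree $1$, plus Nakayama; your proposal asserts this rather than proves it. Second, big image, residual irreducibility and Gorensteinness play no role here (Assumptions \ref{assPi} are only imposed later): since the coefficients are a characteristic-zero affinoid algebra, it suffices that the $\lambda_{\Pi,\Iw,w_1}$-generalised eigenspace at the centre (hence, after shrinking, in a neighbourhood) is one-dimensional, which follows from newform theory and strong multiplicity one together with the automatic regularity $\alpha_{P,i}\neq\beta_{P,i}$ for ordinary specialisations ($v_p(\beta_{P,i})=k_i+2a_i+1>0=v_p(\alpha_{P,i})$); the idempotent cutting out the $\uPi$-factor of the Hecke algebra is then defined over $\cO(\cU)$ after shrinking. (Also, the component of the eigenvariety through $\Pi$ is two-dimensional---$\uPi$ is a section over $\cU_1\times\cU_2$---not a rigid curve, though nothing hinges on this.)
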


 \begin{definition}\label{def:ueta}
  Write
  \[ \cS_{w_1}(\uPi) = H^1_{\m, \m}(\kappa_{\cU_1}, \kappa_{\cU_2})[\uPi], \]
  and let $\underline\eta$ be a basis of $\cS_{w_1}(\uPi)$ as a $\cO(\cU_1\times \cU_2)$-module.
 \end{definition}

 \begin{note}
  The specialisation of $\underline\eta$ at $P=(a_1,a_2)$ is a basis of the space $\cS_{w_1, \Iw(p)}(\Pi[P]) \otimes_{L_P} E$ (where $L_P \subset \QQbar$ is the coefficient field of $\Pi[P]$), which can be identified with $\cS_{w_1}(\Pi[P]) \otimes_{L_P} E$ via degeneracy maps as before. However, it may not descend to a basis of this space over $L_P$. If we choose an algebraic basis $\eta_{\Pi[P]}$ defined over $L_P$ (see Remark \ref{rem:occult}), then this defines (as in \S 7.4 in \cite{grossiloefflerzerbesLfunct}) a pair of periods
  \[
  \Omega_p(\Pi[P])\in E^\times\qquad \text{and}\qquad \Omega_\infty(\Pi[P])\in\CC^\times.
  \qedhere
  \]
 \end{note}


 \subsection{The $p$-adic $L$-function}

 Consider the dual interpolation sheaf $\Omega_H^\vee(\lambda)$, whose specialisation at an integer $a$ specializes to $\omega_H^{-k-2a}$ with $k=k_2-k_1$. We then have a cup product pairing
 \[ \langle\quad, \quad\rangle:\, H^0(\mathfrak{X}^{m}_{H, \Iw(p)}, \Omega^\vee_H(\kappa_{\cU_1}-\kappa_{\cU_2}))\times H_{c}^1(\mathfrak{X}^{m}_{H, \Iw(p)}, \Omega_H(\kappa_{\cU_2}-\kappa_{\cU_1})\otimes\Omega_H^1\,(-D_H))\to \cO(\cU).\]
 Write $\bfj:\ZZ_p^\times\to \cO(\cW)^\times$ for the universal character.
 We consider Katz's Eisenstein measure
 \[ \cE\left(\frac{k_1-k_2}{2}+\kappa_{\cU_1}-\kappa_{\cU_2}-\bfj,  \frac{k_1-k_2}{2}+\kappa_{\cU_1}-\kappa_{\cU_2}+\bfj\right)\in  H^0(\mathfrak{X}^{m}_{H, \Iw(p)}, \underline\omega^\vee_H(\kappa_{\cU_2}-\kappa_{\cU_1}))\hat\otimes  \cO(\cW);\]
 to lighten the notation, we denote it by $ \cE_{\kappa_{\cU_1}-\kappa_{\cU_2}}(\bfj)$.

 \begin{definition}
  We define the $3$-parameter  $p$-adic Asai $L$-function by the pairing
  \[ L_{p, \As}^{\imp}(\uPi, \uet)(\kappa_1, \kappa_2, \bfj)=\frac{1}{p}(\sqrt{D})^{-1-(\kappa_1+\kappa_2)/2-\bfj}(-1)^{\bfj}\cdot \left\langle \cE_{\kappa_{\cU_1}-\kappa_{\cU_2}}(\bfj), \, \iota^*_{\Iw}(\uet)\right\rangle\in \cO(\cU\times\cW), \]
  where $\cU=\cU_1\times \cU_2$.
 \end{definition}

 \subsection{Interpolation of critical values}

 This function $L_{p, \As}^{\imp}(\uPi, \uet)$ satisfies the following interpolation property at $\pp_1$-dominant critical points:

 \begin{theorem}\label{thm:padicL}
  Let $(P, Q)\in \widetilde{\Sigma}_{\crit}$, with $Q = j \in \ZZ$. Then we have
  \[
   \frac{L_{p, \As}^{\imp}(\uPi, \uet)(P, j+1)}{\Omega_p(\Pi[P])} =
  \cE_p\left(\Pi[P], 1+j\right)\cdot \frac{\Gamma(j-1) \Gamma\left(j-k_2-2a_2\right)}{2^{k_1+2a_1-2}i^{1-k_2-2a_2}(-2\pi i)^{2j-k_2-2a_2-1}}
  \cdot \frac{L^{\imp}_{\As}(\Pi[P],j+1)}{\Omega_\infty(\Pi[P])},
  \]
  where
  \[
  \cE_p(\Pi[P], 1 + j)=
  \left(1 - \frac{p^j}{\alpha_{P, 1} \alpha_{P, 2}}\right)
  \left(1 - \frac{p^j}{\alpha_{P, 1} \beta_{P, 2}}\right)
  \left(1 - \frac{\beta_{P, 1} \alpha_{P,2}}{p^{j+1}}\right)
  \left(1 - \frac{\beta_{P, 1} \beta_{P,2}}{p^{j+1}}\right)
  \]
  (the same Euler factor as in \cref{thm:regulator}).
 \end{theorem}

 \begin{proof}
  Since the cup-product defining $L_{p, \As}^{\imp}(\uPi, \uet)$ commutes with specialisation, and the specialisation of the Eisenstein series at this point is a classical modular form, the value at $(P, j+1)$ can be written as a coherent-cohomology cup-product and hence as a zeta-integral. This is a very similar computation as in the proof of \cite[Theorem C]{grossiloefflerzerbesLfunct}; the only difference is that we use a slightly different test vector in the local zeta-integral computation at $p$, since our local Whittaker function at $\pp_2$ is now the normalised $U_{\pp_2}$-eigenfunction, rather than the spherical Whittaker function. However, this makes no difference to the zeta-integral computation (which only sees the value of the Whittaker function at 1).
 \end{proof}

 Comparing with the main theorem of \cite{grossiloefflerzerbesLfunct}, which gives a 2-variable $p$-adic $L$-function varying analytically over the two-dimensional ``slice'' $\cU_1 \times \{a_2\} \times \cW$, for any given $a_2$ with $k_2 + 2a_2 \ge 0$, and noting that the intersection of $\widetilde{\Sigma}_{\crit}$ with this set is Zariski-dense, we conclude that the restriction of $L_{p, \As}^{\imp}(\uPi, \uet)$ to this slice must coincide with the 2-variable $p$-adic $L$-function of \emph{op.cit.}. In particular, its restriction to $\{a_1\} \times \{a_2\} \times \cW$, for any $P = (a_1, a_2) \in \Sigma_{\cl}$ with $k_1 + 2a_1 > k_2 + 2a_2 \ge 0$,  coincides with the 1-parameter $p$-adic $L$-function $L_{p, \As}^{\imp}(\Pi[P])$ from Section \ref{ssect:1parampL} for the specialisation $\Pi[P]$.

 \section{An explicit reciprocity law}

 \subsection{\'Etale cohomology and Euler systems in families}\label{sect:HidaAF}

 We recall some results due to Sheth \cite{sheth25} on families of Galois representations associated to $\uPi$, and families of Galois cohomology classes for these.

 \begin{theorem}[{Sheth, \cite{sheth24}}]
  For small enough $\cU$, there exists a free rank 4 module $\VV=\VV(\uPi)$ over $\cO(\cU_1 \times \cU_2)$, whose specialisation at each classical point $P \in \Sigma_{\cl}$ is canonically isomorphic to the Asai representation $V_{p, \As}(\Pi[P])$.
 \end{theorem}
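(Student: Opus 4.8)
The plan is to build $\VV$ by applying the tensor-induction (``Asai'') functor to the big Galois representation carried by the ordinary Hida family for $\GL_2/F$ through which $\underline\Pi$ factors, and then to check that this operation commutes with specialisation at classical points.

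First, since $p=\pp_1\pp_2$ splits in $F$, Hida's theory for $\GL_2/F$ furnishes a big ordinary Hecke algebra $\mathbb{T}^{\mathrm{ord}}$ of tame level $\fN$, finite flat over the two-variable Iwasawa algebra $\Lambda(\ZZ_p^\times\times\ZZ_p^\times)$, together with a continuous pseudo-representation of $G_F$, unramified outside $\fN p$, with $\tr\Frob_{\mathfrak{q}}=T_{\mathfrak{q}}$, interpolating the $p$-adic Galois representations of all classical specialisations. The family $\underline\Pi$ corresponds to a homomorphism $\mathbb{T}^{\mathrm{ord}}\to\cO(\cU_1\times\cU_2)$ cutting out the ordinary $p$-stabilisation. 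Localising at the maximal ideal attached to $\underline\Pi$ and using the residual absolute irreducibility guaranteed by \cref{assPi}, the pseudo-representation is carried by a genuine representation over the (complete local) localised Hecke algebra, free of rank $2$ by a generalised-matrix-algebra / Nakayama argument; base-changing to $\cO(\cU_1\times\cU_2)$ and shrinking $\cU$ so that this ring is a regular domain, we obtain a free rank-$2$ module $M=M(\underline\Pi)$ with continuous $G_F$-action whose specialisation at each $P\in\Sigma_{\mathrm{cl}}$ is $V_p(\Pi[P]|_{G_F})$ (up to the Tate twist fixing the Hodge--Tate weights).

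Next, apply tensor induction. Fix a lift $\sigma\in G_{\QQ}$ of the non-trivial element of $\Gal(F/\QQ)$ and set $\As(M)=M\otimes_{\cO(\cU)}{}^{\sigma}M$, a free $\cO(\cU_1\times\cU_2)$-module of rank $4$ with its canonical continuous $G_{\QQ}$-action (the two factors being permuted by $\sigma$ and acted on via $M$ and its conjugate by $G_F$). This construction is functorial, hence commutes with any base change $\cO(\cU)\to R$; twisting by the family version of the normalising cyclotomic character of \cite[Def.~4.4.2]{leiloefflerzerbes18} yields $\VV=V(\underline\Pi)$. For $P\in\Sigma_{\mathrm{cl}}$ we then obtain
\[ \VV\otimes_{\cO(\cU),P}E \;=\; \As(M)\otimes_{\cO(\cU),P}E \;=\; \As\!\left(M\otimes_{\cO(\cU),P}E\right) \;=\; \As\!\left(V_p(\Pi[P]|_{G_F})\right) \;=\; V_{p,\As}(\Pi[P]), \]
the last step being the standard identification of the Asai representation with a tensor induction; the isomorphism is canonical because, under residual irreducibility, both sides are pinned down up to unique isomorphism by their traces (Chebotarev $+$ Brauer--Nesbitt), and the trace of a tensor induction is a fixed universal polynomial in the traces of $M$, which agree with those on the geometric side by construction.

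The one non-formal point — and the main obstacle — is the passage from the pseudo-representation on $\mathbb{T}^{\mathrm{ord}}$ to a genuine \emph{free} rank-$2$ module over $\cO(\cU_1\times\cU_2)$: this is exactly where the ``big image'' hypothesis \cref{assPi} enters (to force residual absolute irreducibility), together with a shrinking of $\cU$ arranging that the relevant ring is a domain over which the generalised-matrix-algebra argument applies; matching the normalising twists and identifying the tensor induction with the $H^2_{\et,c}$-description of $V_{p,\As}$ as in \cite{leiloefflerzerbes18} are then routine. Alternatively one can bypass pseudo-representations by taking the ordinary part of $H^2_{\et,c}$ of an integral model of $Y_{G,\sph}$ (or $Y_{G,\Iw}$) in the two-parameter family of weights, applying Hida's control theorem, and cutting out the $\underline\Pi$-isotypic component; this agrees with the tensor-induction construction at every classical point, hence — by Zariski-density of $\Sigma_{\mathrm{cl}}$ and torsion-freeness of the modules involved — as $\cO(\cU)$-modules with $G_{\QQ}$-action.
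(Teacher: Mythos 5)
There is nothing in the paper to compare against: the statement is quoted as a theorem of Sheth, whose proof is in the (unpublished, in-preparation) reference [She24], and the paper itself gives no argument. So your proposal can only be judged on its own terms. As such, it follows the route one would expect: pass from the Hida-theoretic pseudo-representation for $\GL_2/F$ (with $p$ split, so two weight variables) to a genuine free rank-$2$ module $M$ over $\cO(\cU_1\times\cU_2)$ after shrinking $\cU$, apply tensor induction from $G_F$ to $G_{\QQ}$ to get a rank-$4$ module, twist, and check compatibility with specialisation. This is a sound skeleton, and the identification of the Asai representation $H^2_{\et,c}(Y_{G,\sph}/\QQbar,\cV)[I_\Pi^S]$ with the tensor induction of $V_p(\Pi[P])|_{G_F}$ is indeed the standard fact you invoke.

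Two caveats are worth flagging. First, your argument that the specialisation isomorphisms are \emph{canonical} is too quick: under residual absolute irreducibility, Chebotarev plus Brauer--Nesbitt pins down an isomorphism between two representations with equal traces only up to a scalar, not uniquely. To get genuinely canonical identifications at classical points one should either construct $\VV$ geometrically (your alternative suggestion: the ordinary part of $H^2_{\et,c}$ of the Iwahori-level tower cut out by the $\uPi$-eigensystem, with a control theorem --- which for middle-degree cohomology of Hilbert modular surfaces is itself not a formality and needs vanishing of ordinary parts outside degree $2$), or rigidify the tensor-induction construction by fixing Hecke-equivariant lattices/bases; merely matching traces does not suffice for the word ``canonically'' in the statement. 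Second, your construction uses residual absolute irreducibility to rigidify the pseudo-representation, i.e.\ it implicitly imports the big-image hypothesis of \cref{assPi}; the statement as quoted carries no such hypothesis (it is only imposed later in the paper), so either you should record this as an additional assumption or argue (e.g.\ via the geometric construction, or via generalised matrix algebras without irreducibility after shrinking $\cU$) that it can be removed. Neither point is fatal for the use made of the theorem in this paper, where \cref{assPi} is in force, but both should be addressed before calling the proof complete.
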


 Now let $c > 1$ be an integer coprime to $6p N$, and $S$ the set of primes dividing $p D N$. We write $\VV^*(-\bfj)$ for the ``universal twist'' $\VV^* \mathop{\hat\otimes} \cO_E[[\Zp^\times]](-\bfj)$, where $\bfj$ denotes that Galois acts through the composite of the cyclotomic character and the \emph{inverse} of the canonical character from $\Zp^\times$ into its own Iwasawa algebra.

 \begin{note}
  Note that for any open $\cO_E$-lattice $\mathbb{T}^* \subseteq \VV^*$ we have
  \[ H^1(\ZZ[1/S], \VV^*(-\bfj)) = E \otimes_{\cO_E} \varprojlim_r H^1(\QQ(\mu_{p^r}), \mathbb{T}^*),
  \]
  so the cohomology of $\VV^*(-\bfj)$ is the classical Iwasawa cohomology; this is the reason for the choice of sign.
 \end{note}

 \begin{theorem}[{Sheth, \cite{sheth25}}]
  \label{thm:sheth2}
  There exists a cohomology class
  \[ {}_c z_{p^\infty}(\uPi) \in H^1(\ZZ[1/S], \VV^*(-\bfj))\]
  which has the following interpolation property at each point $(P, j) \in \widetilde{\Sigma}_{\mathrm{geom}}$:
  \[
  {}_c z_{p^\infty}(\uPi)(P,Q)=
  \frac{\left(1-\frac{p^j}{\alpha_{P, 1}\alpha_{P,2}}\right)
   \left(1-\frac{\alpha_{P, 1}\beta_{P,2}}{p^{1+j}}\right)
   \left(1-\frac{\beta_{P, 1}\alpha_{P,2}}{p^{1+j}}\right)
   \left(1-\frac{\beta_{P, 1}\beta_{P,2}}{p^{1+j}}\right)}
   {(-1)^jj!\tbinom{k_1+2a_1}{j}\tbinom{k_2+2a_2}{j}}
  \times (c^2 - c^{-t} \chi_{\Pi}^{-1}(c)) \AF_{\et}^{[\Pi[P], j]}.
  \]
 \end{theorem}

 Here $t = (k_1 + 2a_1) + (k_2 + 2a_2) - 2j$ if $P = (a_1, a_2)$. More generally, Sheth's result gives classes ${}_c z_{m p^\infty}(\uPi)$ for any $m$ coprime to $p$, interpolating the Asai--Flach classes over $\QQ(\mu_m)$.

 \subsection{Localisation at $p$}

 By comparison with the \emph{standard} representation in families, we see that (after possibly shrinking $\cU$) there are free rank 2 subrepresentations $\cF_i^+ \VV$ whose specialisations at all points $P \in \Sigma_{\cl}$ of \emph{non-parallel weight} interpolate the subrepresentations $\cF_i^+ V_P$ described above.\footnote{More generally, this applies to any specialisation with $\alpha_{P, 1} \beta_{P, 2} \ne \beta_{P, 1} \alpha_{P, 2}$, which is automatic in non-parallel weights. If these two quantities are equal, then the subrepresentations $\cF_1^+V_P$ and $\cF_2^+V_P$ are isomorphic to each other as $G_{\Qp}$-representations, and cannot be distinguished without using the partial Frobenii; so it is not immediately clear which of these subrepresentations is the one given by specialising the (uniquely defined) ``family'' subrepresentations $\cF_i^+ \VV$.}

 \begin{proposition}
  The localisation of ${}_c z_{p^\infty}(\uPi)$ at $p$ takes values in the rank 3 subrepresentation $\cF_1^+ \VV^* + \cF_2^+ \VV^*$.
 \end{proposition}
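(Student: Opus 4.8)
The plan is to reduce the claim to a statement about individual classical specialisations, and then to propagate it by Zariski density. Concretely, the module $\cF_1^+\mathbf{V}^* + \cF_2^+\mathbf{V}^*$ is a $\cO(\cU)$-direct summand of $\mathbf{V}^*$ of rank $3$ (being the sum of the two rank $2$ summands $\cF_i^+\mathbf{V}^*$, whose intersection is a rank $1$ summand by the analysis of the plectic structure recalled above), so the quotient $\mathbf{V}^* / (\cF_1^+\mathbf{V}^* + \cF_2^+\mathbf{V}^*)$ is a free rank $1$ $\cO(\cU)$-module, call it $\mathbf{Q}$. We must show that the image of ${}_c z_m(\uPi,\mathbf{j})$ under
\[
H^1(\Qp, \mathbf{V}^*(-\mathbf{j})) \longrightarrow H^1(\Qp, \mathbf{Q}(-\mathbf{j}))
\]
vanishes; here I write $H^1(\Qp,-)$ for the localisation-at-$p$ of the global class, restricted to $G_{\Qp}$ and then to the place $p$ of $\QQ(\mu_m)$ (the classes over $\QQ(\mu_m)$ are handled identically by restricting to a place above $p$, so I suppress $m$ in what follows). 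Since $\cO(\cU)$ is reduced (it is an affinoid algebra over $E$ which is a domain after shrinking $\cU$, $\cU_i$ being chosen irreducible), it suffices to show this image specialises to zero at every point of a Zariski-dense subset of $\cU$.

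The natural dense subset to use is $\Sigma_{\cl}$ intersected with the non-parallel-weight locus — this is still Zariski-dense in $\cU$ since the parallel-weight locus is a proper analytic subset, and on it the family subrepresentations $\cF_i^+\mathbf{V}$ specialise to the $\cF_i^+ V_P$ as recalled just above. So fix such a $P \in \Sigma_{\cl}$. By Sheth's interpolation theorem, the specialisation of ${}_c z_m(\uPi,\mathbf{j})$ at $(P,j)$ — for any geometric $j$, of which there is at least one since $P$ has $k_i + 2a_i \ge 0$ — is, up to a nonzero scalar (the Euler-factor product in the numerator is generically nonzero, and in any case one only needs a single $j$ where it is nonzero, which exists by the non-vanishing hypothesis on $P_p(\Pi,X)$), the Asai--Flach class ${}_c z_m(\Pi[P], j)$ of \cite{leiloefflerzerbes18}. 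Now I invoke the local behaviour of the Asai--Flach classes at $p$ established in \emph{loc. cit.}: the class ${}_c z_m(\Pi[P],j)$, localised at $p$, lies in the $H^1$ of the subrepresentation $\cF_1^+ V_P^* + \cF_2^+ V_P^*$ of $V_P^*(-j)$ — equivalently, its image in $H^1(\Qp, V_P^* / (\cF_1^+ V_P^* + \cF_2^+ V_P^*)(-j))$ is zero. (This is exactly the input that makes the Asai--Flach Euler system interact with the rank-one quotient via Perrin-Riou's regulator in the reciprocity law; it is the source of the ``$\cF_i^+$'' bookkeeping throughout \cite{leiloefflerzerbes18}.) Since specialisation of the family at $P$ is compatible with the identifications $\cF_i^+\mathbf{V}^*[P] = \cF_i^+ V_P^*$ and hence $\mathbf{Q}[P] = V_P^* / (\cF_1^+ V_P^* + \cF_2^+ V_P^*)$, and commutes with the localisation map, we conclude that the image of ${}_c z_m(\uPi,\mathbf{j})$ in $H^1(\Qp,\mathbf{Q}(-\mathbf{j}))$ specialises to $0$ at $P$. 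As $P$ ranges over a Zariski-dense set, and $H^1(\Qp,\mathbf{Q}(-\mathbf{j}))$ is a finitely generated $\cO(\cU)$-module with $\cO(\cU)$ reduced, this forces the image to be zero, which is the claim.

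The one genuinely delicate point — and the step I'd expect to require care — is the compatibility of specialisation with the subrepresentation structure at parallel-weight-adjacent behaviour, i.e. making sure that ``$\cF_i^+\mathbf{V}$'' as a family object really does specialise at non-parallel $P$ to the $\cF_i^+ V_P$ for which the Asai--Flach local statement of \cite{leiloefflerzerbes18} is phrased (as opposed to the two being swapped). This is precisely the subtlety flagged in the footnote to the preceding paragraph of the paper: in non-parallel weight the two rank-$2$ pieces are distinguishable as $G_{\Qp}$-representations (their crystalline Frobenius eigenvalues $\{\alpha_1,\beta_1\}$ versus $\{\alpha_2,\beta_2\}$ differ, since $\alpha_i = p^{(k_i+2a_i+1)/2}\alpha_i^\circ$ have different slopes once $k_1+2a_1 \ne k_2+2a_2$), so the comparison with the standard representation in families pins them down uniquely and coherently — but one should check that the labelling convention matches the one used in \cite{leiloefflerzerbes18} on the nose. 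Once that bookkeeping is in place, the density argument is routine. A secondary, milder point is to confirm that $H^1$ of the rank-$1$ quotient over $\cO(\cU)$ has no embedded torsion that could obstruct ``zero on a dense set $\Rightarrow$ zero''; this follows from flatness/finiteness of Galois cohomology in families (after possibly shrinking $\cU$ once more) together with reducedness of $\cO(\cU)$.
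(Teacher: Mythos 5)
Your argument is essentially the paper's own proof: the paper simply cites the corresponding statement for the classical specialisations (Corollary 9.2.3 of \cite{leiloefflerzerbes18}, the local result on Asai--Flach classes you invoke) and then deduces the family statement ``by interpolation,'' which is precisely the Zariski-density argument you spell out. Your additional remarks on matching the $\cF_i^+$ labelling at non-parallel-weight points and on reducedness are just careful elaborations of that same route, so the proposal is correct and not genuinely different in approach.
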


 \begin{proof} The analogous result for the specialisations at classical points is Corollary 9.2.3 of \cite{leiloefflerzerbes18}, and the result for the family $\uPi$ follows from this by interpolation. \end{proof}

 We now consider the image of ${}_c z_{p^\infty}(\uPi)$ in the rank 1 quotient $\MM_{\pp_1} = (\cF_1^+ \VV^* + \cF_2^+ \VV^*) / \cF_1^+  \VV^*$. Note that $\MM_{\pp_1}(-k_2-2\kappa_2-1)$ is unramified, so we can define a Perrin-Riou ``big logarithm'' for $\MM_{\pp_1}$ by pulling back the big logarithm for its unramified twist (compare \cite[Theorem 8.2.8]{KLZ17}), giving a map of $\cO(\cU \times \cW)$-modules
 \[
 \cLPR:\, H^1(\QQ_p, \MM_{\pp_1}(-\bfj))\to \Dcris(\MM_{\pp_1}(-\bfj)).
 \]
 Comparing the interpolating property of Perrin-Riou's map with the preceding theorem (as in the Rankin--Selberg case, see \cite[proof of Theorem 10.2.2]{KLZ17}), we obtain:

 \begin{proposition}
  For any $(P, j) \in \widetilde{\Sigma}_{\mathrm{geom}}$ such that $\alpha_{P, 1} \beta_{P, 2} \ne p^{1 + j}$, we have
  \[
   \cLPR({}_c z_{p^\infty} (\uPi))\big|_{(P, j)}
   =
   \frac{(-1)^{k_2-j}}{(k_2+2a_2-j)!}\cdot \frac{\left( 1-\frac{p^j}{\alpha_{P, 1}\beta_{P,2}}\right)}{\left(1-\frac{\alpha_{P, 1}\beta_{P,2}}{p^{j+1}}\right)}\cdot \log\left({}_c z_{p^\infty}(\uPi)\big|_{(P, j)}\right).\]
 \end{proposition}

 We can now reformulate Theorem \ref{thm:regulator} in terms of the Eichler-Shimura isomorphism $\ES_{\Pi[P], w_1}$ (see \eqref{eq:ES}):

 \begin{theorem}\label{thm:relation}
  For each $(P,Q)\in \widetilde{\Sigma}_{\mathrm{geom}}$, say $P=(a_1,a_2)$ and $Q=j$, such that $\alpha_{1, P} \beta_{2, P} \ne \beta_{1, P} \alpha_{2, P}$ and $\alpha_{P, 1} \beta_{P, 2} \ne p^{1 + j}$, we have
  \begin{multline*}
   \langle \cLPR( {}_c z_{p^\infty}(\Pi[P],j)), \, \ES_{\Pi[P], w_1}(\eta_P)\rangle = \\
   (c^2-c^{-t}\chi_{\Pi}(c)^{-1}) \cdot \sqrt{D}^{j+1}(-1)^{(k_1-k_2)/2+j+1}N^t \cdot
   L_{p, \As}^{\imp}(\uPi, \uet)(P,j+1).
  \end{multline*}
  Here, $t=k_1+k_2+2(a_1+a_2)-2j$, and $\chi$ is the restriction to $\QQ$ of the nebentype character of $\uPi$.
 \end{theorem}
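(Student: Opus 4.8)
The plan is to read this identity off the main theorem of \cite{grossiloefflerzerbesreg}, after three bridging moves: (i) converting the Perrin--Riou regulator on the left into a Bloch--Kato logarithm using the Perrin--Riou interpolation proposition above; (ii) identifying the coherent cohomology class used in \emph{op.\ cit.}\ with $\mathrm{ES}_{\Pi[P],w_1}(\eta_P)$; and (iii) rewriting the value of the one-variable $p$-adic Asai $L$-function appearing in \emph{op.\ cit.}\ in terms of the family $L$-function $\cL^{\imp}_{p,\As}(\uPi,\underline{\eta})$. Throughout we fix $(P,Q)=(P,j)\in\widetilde{\Sigma}_{\mathrm{geom}}$ with $\alpha_{1,P}\beta_{2,P}\ne\beta_{1,P}\alpha_{2,P}$; by the footnote to the construction of the $\cF_i^+\mathbf{V}$, this is precisely the hypothesis under which $\cF_1^+\mathbf{V}$, $\cF_2^+\mathbf{V}$, the quotient $\MM_{\pp_1}$, and hence both $\mathcal{L}_{\mathrm{PR}}$ and $\mathrm{ES}_{\Pi[P],w_1}(\eta_P)$, are defined at $P$.

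First I would specialise the preceding proposition at $(P,j)$, turning the left-hand side into an explicit elementary scalar (a reciprocal factorial times an Euler-type ratio in $\alpha_{P,1}\beta_{P,2}$ and $p^{j}$) multiplied by $\langle\log\!\big({}_cz_1(\Pi[P],j)\big),\,\mathrm{ES}_{\Pi[P],w_1}(\eta_P)\rangle$, the Bloch--Kato logarithm of the (geometric, hence $H^1_f$-valued) Asai--Flach class paired against the Eichler--Shimura functional. Second I would quote the main theorem of \cite{grossiloefflerzerbesreg}, which computes exactly this pairing --- there expressed against a specific class in the relevant rank-$1$ coherent cohomology group --- and equates it, up to $\Gamma$-factors, an Euler factor at $p$, powers of $p$ and $\sqrt{D}$, signs and the $c$-smoothing factor $(c^2-c^{-t}\chi(c)^{-1})$, to the value at $j+1$ of the $p$-adic Asai $L$-function of $\Pi[P]$ (a point outside the classical interpolation range, so this is a genuinely $p$-adic identity). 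The ``reformulation'' in the statement is the claim that the coherent class of \emph{op.\ cit.}\ equals $\mathrm{ES}_{\Pi[P],w_1}(\eta_P)$ up to the constants listed there; this follows from the definition \eqref{eq:ES}, unwinding that $\eta_P$ --- transported from $\cS_{w_1,\Iw}(\Pi[P])$ to $\cS_{w_1}(\Pi[P])$ by \eqref{eq:pstab} --- corresponds under the proposition of \cite{LZ-plectic} to the unique lift into $\Fil_2^+ D^{\As}_{\fP}(\Pi[P])^{\varphi_1=\alpha_1}$, which is precisely the vector entering the local zeta-integral at $p$ in \emph{op.\ cit.}\ (the passage from spherical to $U_{\pp_i}$-eigenvectors being inert there, as in the proof of \cref{thm:padicL}).

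Third, by the remark after \cref{thm:padicL} --- that $\cL^{\imp}_{p,\As}(\uPi,\underline{\eta})$ restricted to $\{a_1\}\times\{a_2\}\times\cW$ is the one-variable $p$-adic Asai $L$-function of \cite{grossiloefflerzerbesLfunct} for $\Pi[P]$ --- the $p$-adic $L$-value of \emph{op.\ cit.}\ becomes $\cL^{\imp}_{p,\As}(\uPi,\underline{\eta})(P,j+1)$. It then remains to collect constants: the elementary scalar from the Perrin--Riou specialisation, the $\Gamma$-factor, $p$-power and $\sqrt{D}$-power prefactors from \cite{grossiloefflerzerbesreg}, and the identification constant from \eqref{eq:ES} should combine to $(c^2-c^{-t}\chi(c)^{-1})\,\sqrt{D}^{j+1}\,(-1)^{(k_1-k_2)/2+j+1}\,N^t$, with the two Euler-type factors cancelling.

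The main obstacle is this constant bookkeeping, above all the compatibility in step two: one must reconcile the normalisation of $\mathrm{ES}$ in \eqref{eq:ES} --- which absorbs the $p$-adic period $\Omega_p(\Pi[P])$, the projection onto the $\varphi_1=\alpha_1$-eigenspace, and the level-change operator \eqref{eq:pstab} --- with the coherent pairing and test vectors in the zeta-integral of \cite{grossiloefflerzerbesreg}, and check that the Euler and $\Gamma$ prefactors assemble exactly. The Galois-theoretic ingredients (localisation at $p$ into the rank-$3$ subrepresentation $\cF_1^+\mathbf{V}^*+\cF_2^+\mathbf{V}^*$, the construction of $\mathcal{L}_{\mathrm{PR}}$ via the unramified twist, and the transfer to Hida families by Zariski-density of $\widetilde{\Sigma}_{\mathrm{geom}}$) are routine by now and should cause no trouble.
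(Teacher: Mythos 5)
Your proposal matches the paper's treatment: the paper offers no independent argument for this theorem, stating it simply as the main result of \cite{grossiloefflerzerbesreg} ``reformulated in terms of the Eichler--Shimura isomorphism'' \eqref{eq:ES}, which is exactly the route you take (quote the regulator formula of \emph{op.\ cit.}, identify the coherent test class with $\mathrm{ES}_{\Pi[P],w_1}(\eta_P)$ via \eqref{eq:pstab} and the plectic proposition, and identify the specialised three-variable $L$-function with the one-variable one). Your bridging steps and the acknowledged constant bookkeeping are precisely the content of that reformulation, so the proposal is correct and essentially coincides with the paper's approach.
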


 \subsection{The base-change case} We briefly summarize what can happen in an important special case when the assumption $\alpha_{1, P} \beta_{2, P} \ne \beta_{1, P} \alpha_{2, P}$ is \emph{not} satisfied. We consider a family $\uPi$ whose specialisation at $P = (0, 0)$ is the base-change of a classical modular form $\pi$ of weight $k + 2$ and trivial character, which is good ordinary at $p$. In this case there is an additional involution $\rho$ acting on $V_{p, \As}(\Pi)$ arising from the action of $\Gal(F / \QQ)$, with the $\rho = +1$ eigenspace one-dimensional and the $\rho = -1$ eigenspace three-dimensional. Moreover, $V_{p, \As}(\Pi)$ also carries a canonical symmetric bilinear form arising from Poincar\'e duality.

 Since the specialisation at $P$ of the ``family'' subrepresentation $\cF_1^+ \VV$ is a 2-dimensional subrepresentation which is isotropic, and contains the unique unramified subrepresentation, it must be either $\cF_1^+ V$ or $\cF_2^+ V$. So either $\ES_{\Pi, w_1}(\eta)$, or its image under $\rho$, gives a trivialisation of $\Dcris(\MM_{\pp_1})|_{P = 0}$. As the class ${}_c z_{p^\infty}(\Pi[P],j)$ lies in the $\rho = (-1)^j$ eigenspace, the regulator formula above continues to hold up to a possible sign $(-1)^j$, for all $j < k$.

 (The case $j = k$ is still more awkward, because then we automatically have $\alpha_{1} \beta_{2} = p^{1 + j}$. In this case, the right-hand side of \cref{thm:relation} is zero, because of the results on improved $p$-adic $L$-functions in \cite{LZ-improved}. However, it is not \emph{a priori} clear whether this holds for the left-hand side as well; the image of ${}_c z_{p^\infty}(\uPi)$ in $H^1(\QQ, V_{p, \As}(\Pi)^*(-k))$ is zero, because of the form of the Euler factors in \cref{thm:sheth2}; but this does not imply that the Perrin-Riou regulator vanishes at $(P, Q)$, since the regulator depends on the classes over the whole $p$-cyclotomic tower.)


 \subsection{Explicit reciprocity laws and meromorphic Eichler--Shimura}

 \begin{definition}
  Denote by $\Sigma_{\mathrm{cl}}^{\sharp} \subset \Sigma_{\mathrm{cl}}$ the set of $P$ with the following property: there exists a $j$ such that $(P, j) \in \widetilde{\Sigma}_{\mathrm{geom}}$ and $L_p^{\imp}(\Pi[P], \eta_P; j + 1) \ne 0$. (We call these \emph{good classical points}.)
 \end{definition}

 \begin{lemma}\label{lem:dense}
  The set $\Sigma_{\mathrm{cl}}^{\sharp}$ is Zariski-dense in $\cU$.
 \end{lemma}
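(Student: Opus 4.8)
The plan is to deduce Zariski-density of $\Sigma_{\mathrm{cl}}^{\sharp}$ from the Zariski-density of $\widetilde{\Sigma}_{\mathrm{crit}}$ together with the observation that the $p$-adic $L$-function $\cL^{\imp}_{p,\As}(\underline{\Pi},\underline{\eta})$ is a single nonzero element of the domain $\cO(\cU\times\cW)$, hence cannot vanish on a Zariski-dense subset. First I would recall from \cref{thm:padicL} that for every $(P,j)\in\widetilde{\Sigma}_{\mathrm{crit}}$ the value $\cL^{\imp}_{p,\As}(\underline{\Pi},\underline{\eta})(P,j+1)$ equals an explicit nonzero archimedean factor times $\cL^{\mathrm{imp}}_{\As}(\Pi[P],j+1)$ divided by the periods; the point is that the complex critical Asai $L$-value $\cL^{\mathrm{imp}}_{\As}(\Pi[P],j+1)$ is non-vanishing because $j+1$ lies strictly inside the critical strip (by \cref{note:critval}, critical values occur for $k_2+2a_2+2\le s\le k_1+2a_1+1$, and $1$-dominant critical means exactly $k_2+2a_2< j\le k_1+2a_1$, so $j+1$ is in range and is a genuine half-integer point away from the center, where Euler products of the completed $L$-function converge). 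One should note the Euler-factor fudge $\mathcal{E}(\Pi[P],\tau)$ and the periods $\Omega_p(\Pi[P]),\Omega_\infty(\Pi[P])$ are nonzero, and that $\mathcal{E}(\Pi[P],\tau)\ne 0$ follows from the ordinarity hypothesis (the relevant slopes force the factors $1-p^{\cdots}/(\alpha\beta)$ etc.\ to be nonzero; alternatively invoke the non-vanishing of the local Euler factor $P_p(\Pi,X)$ at powers of $p$, which is part of the running hypotheses). Hence for every $(P,j)\in\widetilde{\Sigma}_{\mathrm{crit}}$ with $j$ an integer we get $\cL^{\imp}_{p,\As}(\underline{\Pi},\underline{\eta})(P,j+1)\ne 0$, so in particular $P\in\Sigma_{\mathrm{cl}}^{\sharp}$ provided $(P,j)\in\widetilde{\Sigma}_{\mathrm{geom}}$ as well.

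The subtlety is that membership in $\Sigma_{\mathrm{cl}}^{\sharp}$ requires a $j$ with $(P,j)\in\widetilde{\Sigma}_{\mathrm{geom}}$ (i.e.\ $0\le j\le\min(k_1+2a_1,k_2+2a_2)$) \emph{and} $\cL_p^{\imp}(\Pi[P],\eta_P;j+1)\ne 0$, whereas the critical-value non-vanishing above is phrased for $\widetilde{\Sigma}_{\mathrm{crit}}$ (where $k_2+2a_2<j\le k_1+2a_1$). These two ranges of $j$ are disjoint, so I cannot directly reuse the same $j$. Instead I would argue as follows: fix $P=(a_1,a_2)\in\Sigma_{\mathrm{cl}}$ with $k_1+2a_1>k_2+2a_2\ge 0$; by the last paragraph of \S4 the restriction of $\cL^{\imp}_{p,\As}(\underline{\Pi},\underline{\eta})$ to $\{P\}\times\cW$ coincides with the one-variable $p$-adic $L$-function $\cL^{\imp}_{\As,p}(\Pi[P])$ of \cref{thm:thmC}, which is a nonzero element of $\Lambda_E(\Zp^\times)$ (it interpolates nonzero critical $L$-values by \cref{thm:thmC}, exactly as above). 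Therefore, for \emph{all but finitely many} integers $j$ in any prescribed infinite set, $\cL_p^{\imp}(\Pi[P],\eta_P;j+1)\ne 0$ — because a nonzero Iwasawa function has only finitely many zeros among the characters $x\mapsto x^{j+1}$ on any Zariski-dense set of such $j$'s, e.g.\ by the Weierstrass preparation theorem. Now I would choose $\cU_1\times\cU_2$ small enough and exploit the second weight variable: for $P$ ranging over a Zariski-dense subset of $\cU$ one can arrange $\min(k_1+2a_1,k_2+2a_2)$ to be arbitrarily large, so the geometric range $0\le j\le\min(k_1+2a_1,k_2+2a_2)$ contains arbitrarily many integers, and hence contains a $j$ with the one-variable $p$-adic $L$-function nonzero at $j+1$. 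This gives $P\in\Sigma_{\mathrm{cl}}^{\sharp}$.

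Concretely the argument is: (i) the set of $P\in\Sigma_{\mathrm{cl}}$ with $k_1+2a_1>k_2+2a_2\ge 0$ is Zariski-dense in $\cU$ (it is $\cU\cap\ZZ^2$ minus finitely many lines, using that $k_1>k_2$ in the initial weight and shrinking $\cU$ if necessary so that $k_1+2a_1>k_2+2a_2$ holds on all of $\cU\cap\ZZ^2$); (ii) for each such $P$, the one-variable function $\cL^{\imp}_{\As,p}(\Pi[P])\in\Lambda_E(\Zp^\times)$ is nonzero, since it interpolates the nonzero critical values $\cL^{\mathrm{imp}}_{\As}(\Pi[P],s)$ at integers $k_2+2a_2+2\le s\le k_1+2a_1+1$ with nonzero interpolation factors (ordinary $\Rightarrow$ the modified Euler factor is a unit, periods nonzero), so by Weierstrass preparation it has only finitely many zeros on any $\ZZ$-indexed family of arithmetic characters; (iii) hence there exists an integer $j$ with $0\le j\le\min(k_1+2a_1,k_2+2a_2)$ — such $j$ exist whenever $\min(k_1+2a_1,k_2+2a_2)\ge 0$, and one simply needs the existence of one $j$ in this range avoiding the finite zero set, which holds once $\min(k_1+2a_1,k_2+2a_2)$ exceeds that finite bound; this is guaranteed for $P$ outside a further proper Zariski-closed subset, or can be arranged on a cofinal family, keeping density — such that $(P,j)\in\widetilde{\Sigma}_{\mathrm{geom}}$ and $\cL_p^{\imp}(\Pi[P],\eta_P;j+1)\ne 0$, i.e.\ $P\in\Sigma_{\mathrm{cl}}^{\sharp}$; (iv) since this holds for a Zariski-dense set of $P$, $\Sigma_{\mathrm{cl}}^{\sharp}$ is Zariski-dense in $\cU$. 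The main obstacle — and the point that needs the most care — is step (iii): controlling, uniformly enough, that the geometric range of $j$ for a dense set of weights $P$ is large enough to dodge the zero locus of the one-variable $p$-adic $L$-function; this is precisely why the paper introduced the second weight variable (as flagged in the Remark after the definition of $\widetilde{\Sigma}_{\mathrm{crit}}$), and the cleanest way to run it is to note that along suitable arithmetic progressions in $\cU\cap\ZZ^2$ one has $\min(k_1+2a_1,k_2+2a_2)\to\infty$, so infinitely many admissible $j$ appear, beating any fixed finite zero set, while such progressions remain Zariski-dense in the two-dimensional $\cU$.
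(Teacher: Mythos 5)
Your overall strategy --- produce, for a Zariski-dense set of $P$, an integer $j$ in the geometric range at which the one-variable $p$-adic $L$-function of $\Pi[P]$ is non-zero --- is not the argument the paper uses, and as written it has a genuine gap at exactly the step you flag as needing the most care, namely (iii). For each fixed $P$, Weierstrass preparation does give a \emph{finite} set $Z_P$ of integers $j$ with $\cL^{\imp}_{\As,p}(\Pi[P])(j+1)=0$, but nothing bounds $\# Z_P$ uniformly in $P$: the Weierstrass degree of the fibre $\cL^{\imp}_{p,\As}(\uPi,\underline{\eta})\big|_{\{P\}\times\cW}$ may grow without bound as $P$ varies, so letting $\min(k_1+2a_1,k_2+2a_2)\to\infty$ does not by itself ``beat'' $Z_P$. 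Neither of your escape hatches closes this: the assertion that the bad $P$ lie in ``a further proper Zariski-closed subset'' is essentially the content of the lemma itself (there is no a priori reason the set of $P$ for which \emph{every} geometric $j$ is a zero should be contained in a proper closed subset), and ``arranged on a cofinal family'' is not an argument. One could try to repair the step with genuine rigid-geometric input (over an affinoid $\cU\times B$, with $B$ a closed disc containing all the evaluation points $j+1$, the zero divisor of $\cL^{\imp}_{p,\As}$ is finite flat over $\cU$ away from the closed locus where it vanishes on an entire fibre, giving local boundedness of $\#Z_P$), but you have not supplied that, and it is considerably more work than the lemma deserves.

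The paper's proof sidesteps all of this with a short algebraic trick: let $f\in\cO(\cU)$ be any function vanishing on $\Sigma_{\mathrm{cl}}^{\sharp}$. For each $(P,j)\in\widetilde{\Sigma}_{\mathrm{geom}}$, either $\cL^{\imp}_{p,\As}(\uPi,\underline{\eta})(P,j+1)=0$, or else $P\in\Sigma_{\mathrm{cl}}^{\sharp}$ and hence $f(P)=0$; in either case the product $f\cdot\cL^{\imp}_{p,\As}(\uPi,\underline{\eta})$ vanishes at $(P,j+1)$. Since these points are Zariski-dense in $\cU\times\cW$, the product is identically zero; and since $\cL^{\imp}_{p,\As}(\uPi,\underline{\eta})$ is not a zero-divisor --- because every component of $\cU\times\cW$ contains a critical interpolation point with non-vanishing complex $L$-value, which is the part of your write-up that is correct and is all that is needed --- one concludes $f=0$. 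Note that this argument requires only \emph{one} good critical point per component, not non-vanishing of the entire one-variable $p$-adic $L$-function at every classical $P$, and it never has to locate a good $j$ for any individual $P$.
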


 \begin{proof}
  Consider a function $f$ vanishing at all these points. Then $f \cdot L^{\imp}_p(\uPi, \uet)$ must vanish on $\widetilde{\Sigma}_{\mathrm{geom}}$. As $\widetilde{\Sigma}_{\mathrm{geom}}$ is Zariski-dense, it is identically 0 on $\cU \times \cW$. But $L_p^{\imp}(\uPi, \uet)$ is not a zero-divisor, since every component of $\cU \times \cW$ contains a point where $L_p(\uPi, \uet)$ interpolates a non-vanishing complex $L$-value. Thus $f = 0$.
 \end{proof}

 Write $\mathcal{Q}(\cU)$ for the fraction field of $\cO(\cU)$; and let us write ${}_c L_{p, \As}(\uPi)$ for the $p$-adic $L$-function multiplied by the $c$-factor $(c^2 - \dots)$ (which clearly interpolates to an element of $\cO(\cU \times \cW)$.

 \begin{theorem}\label{thm:ERL}\
  \begin{enumerate}
   \item There exists an isomorphism of $\mathcal{Q}(\cU)$-modules
   \[\ES_{w_1, \uPi}:\, \cS_{w_1}(\uPi)\otimes_{\cO(\cU)}\mathcal{Q}(\cU)\cong  \Dcris(\MM_{\pp_1})\otimes_{\cO(\cU)}\mathcal{Q}(\cU), \]
   depending only on $\uPi$, which satisfies the following property: for all $P \in \Sigma_{\mathrm{cl}}^{\sharp}$, the specialisation of the morphism $\ES_{w_1, \uPi}$ at $P$ is well-defined and coincides with the Eichler--Shimura isomorphism $\ES_{w_1, \Pi[P]}$.
   \item Extending $\ES_{w_1, \uPi}$ to an isomorphism of $\cO(U\times\cW)$-modules, we have an explicit reciprocity law
   \[
    \left\langle \cLPR( {}_c z_{p^\infty}(\uPi)), \,
    \ES_{w_1, \uPi}(\underline\eta)\right\rangle =
    {}_c L_{p, \As}^{\imp}(\uPi, \uet)(1 + \bfj).
   \]
  \end{enumerate}
 \end{theorem}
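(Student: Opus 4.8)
The plan is to \emph{define} the meromorphic Eichler--Shimura isomorphism by the reciprocity law in part~(2), and then to use Theorem~\ref{thm:relation} together with Lemma~\ref{lem:dense} both to check that this definition makes sense and to verify the specialisation property in part~(1). After shrinking $\cU$ we may assume $\cU_1,\cU_2$ are connected, so that $R := \cO(\cU\times\cW)$ and $\cO(\cU)$ are domains. The module $\cS^1(\underline\Pi)$ is free of rank $1$ over $\cO(\cU)$, and so is $\Dcris(\MM_{\pp_1})$ (the latter because $\MM_{\pp_1}(-k_2-2\kappa_2-1)$ is unramified), while $\Dcris(\MM_{\pp_1}(-\mathbf{j}))$ is free of rank $1$ over $R$. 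Fix a basis $\omega$ of $\Dcris(\MM_{\pp_1})$ over $\cO(\cU)$, and using the natural (perfect) pairing on these $\Dcris$-objects together with the canonical trivialisation of $\Dcris(\QQ_p(-\mathbf{j}))$ set $h := \langle \mathcal{L}_{\mathrm{PR}}({}_cz_1(\underline\Pi,\mathbf{j})),\ \omega\rangle \in R$. \emph{Step 1: $h$ is a non-zero-divisor.} Pick $P\in\Sigma_{\mathrm{cl}}^{\sharp}$ and an integer $j$ with $(P,j)\in\widetilde\Sigma_{\mathrm{geom}}$ and $\mathcal{L}^{\imp}_p(\Pi[P],\eta_P;j+1)\neq 0$; since the restriction of $\mathcal{L}^{\imp}_{p,\As}(\underline\Pi,\underline\eta)$ to $\{P\}\times\cW$ is the one-variable $p$-adic Asai $L$-function of $\Pi[P]$ (as noted at the end of \S4), this value is nonzero, so by Theorem~\ref{thm:relation} the pairing $\langle\mathcal{L}_{\mathrm{PR}}({}_cz_1(\underline\Pi,\mathbf{j}))(P,j),\mathrm{ES}_{\Pi[P],w_1}(\eta_P)\rangle$ is a nonzero scalar multiple of it. Hence $\mathcal{L}_{\mathrm{PR}}({}_cz_1(\underline\Pi,\mathbf{j}))(P,j)\neq 0$ and $h(P,j)\neq 0$; as $\Sigma_{\mathrm{cl}}^{\sharp}\neq\emptyset$ and $R$ is a domain, $h$ is a non-zero-divisor.

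\emph{Step 2: the key $\mathbf{j}$-independence.} Put $f := {}_c\cL^{\imp}_{p,\As}(\underline\Pi,\underline\eta)(1+\mathbf{j})\cdot h^{-1}\in\operatorname{Frac}(R)$; the crux of the whole argument is the claim that $f\in\mathcal{Q}(\cU)\subseteq\operatorname{Frac}(R)$. Granting this, define $\mathrm{ES}^1_{\underline\Pi}(\underline\eta):=f\cdot\omega$; this is independent of the auxiliary choice of $\omega$, gives an isomorphism of $\mathcal{Q}(\cU)$-modules because $f\neq 0$ (the $p$-adic $L$-function is a non-zero-divisor, by the argument of Lemma~\ref{lem:dense}), and after base change to $R$ it satisfies $\langle\mathcal{L}_{\mathrm{PR}}({}_cz_1(\underline\Pi,\mathbf{j})),\mathrm{ES}^1_{\underline\Pi}(\underline\eta)\rangle = f\cdot h = {}_c\cL^{\imp}_{p,\As}(\underline\Pi,\underline\eta)(1+\mathbf{j})$, which is part~(2). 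To prove the claim, restrict to the Zariski-dense set of points $(P,j)$ with $P\in\Sigma_{\mathrm{cl}}$ of non-parallel weight and $h(P,j)\neq 0$. There, combining Theorem~\ref{thm:relation}, the interpolation formula for $\mathcal{L}_{\mathrm{PR}}$ (the Proposition preceding it), and a comparison of normalisations — in which the elementary factor $\sqrt{D}^{\,j+1}(-1)^{(k_1-k_2)/2+j+1}N^{t}$ of Theorem~\ref{thm:relation} is exactly the discrepancy between the global trivialisation entering $h$ and the pointwise one — one finds that $f(P,j)$ equals the coefficient of $\mathrm{ES}_{\Pi[P],w_1}(\eta_P)$ relative to $\omega[P]$, a quantity depending only on the weight $P$ and not on $j$. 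Therefore $f(\kappa_1,\kappa_2,\mathbf{j})-f(\kappa_1,\kappa_2,\mathbf{j}')$ vanishes on a Zariski-dense subset of $\cU\times\cW\times\cW$: indeed $\min(k_1+2a_1,k_2+2a_2)$ is unbounded along $\Sigma_{\mathrm{cl}}^{\sharp}$ (a Zariski-dense subset satisfying a bound $\min(\cdots)\le M$ would lie in a finite union of lines, since $k_i+2a_i\ge 0$ forces only finitely many admissible $a_i$ with $k_i+2a_i\le M$), while away from the proper analytic locus where $h$ vanishes identically on a fibre $h(P,-)$ has only finitely many zeros in $\cW$, so for suitable $P$ there are unboundedly many admissible pairs $(j,j')$. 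Hence $f$ is pulled back from $\cU$, as claimed.

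\emph{Step 3: specialisation.} For $P\in\Sigma_{\mathrm{cl}}^{\sharp}$ and $j$ as in Step~1 one has $h(P,j)\neq 0$, so pairing with $\mathcal{L}_{\mathrm{PR}}({}_cz_1(\underline\Pi,\mathbf{j}))(P,j)$ is injective on the relevant line $\Dcris(\MM_{\pp_1}[P])$. Specialising the identity of part~(2) at $(P,j)$ and comparing with Theorem~\ref{thm:relation} shows that $\mathrm{ES}^1_{\underline\Pi}(\underline\eta)[P]$ and $\mathrm{ES}_{\Pi[P],w_1}(\eta_P)$ pair to the same value with $\mathcal{L}_{\mathrm{PR}}({}_cz_1(\underline\Pi,\mathbf{j}))(P,j)$; by injectivity they coincide, which is precisely the statement that the specialisation of $\mathrm{ES}^1_{\underline\Pi}$ at $P$ is well-defined and equals the classical Eichler--Shimura isomorphism $\mathrm{ES}^1_{\Pi[P]}$. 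This establishes part~(1), and part~(2) was obtained in Step~2.

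The main obstacle is the $\mathbf{j}$-independence in Step~2. This is exactly where one uses that the regulator formula of \cite{grossiloefflerzerbesreg} holds over the \emph{two}-parameter weight family, so that $\widetilde\Sigma_{\mathrm{geom}}$ — and not merely its intersection with a one-dimensional slice in $\cW$ — is Zariski-dense; and it is where the explicit archimedean and level factors of Theorem~\ref{thm:relation} must be matched term by term against the interpolation factors of $\mathcal{L}_{\mathrm{PR}}$ and of the $p$-adic $L$-function, which is somewhat involved but routine. It is also the structural reason one cannot hope for an isomorphism of $\cO(\cU)$-modules: $h$ may vanish at points where ${}_c\cL^{\imp}_{p,\As}(\underline\Pi,\underline\eta)(1+\mathbf{j})$ does not, so $f$ genuinely acquires poles, and the classical points lying under these poles are the ``bad'' specialisations dealt with later by the leading-term argument.
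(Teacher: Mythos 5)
Your proposal is correct and follows essentially the same route as the paper: fix an auxiliary trivialisation, form the ratio between the Perrin-Riou pairing and the $p$-adic $L$-function, use Theorem~\ref{thm:relation} at geometric points where the $L$-value is non-zero to see that this ratio depends only on the weight variable, and then rescale the auxiliary trivialisation to define $\mathrm{ES}^1_{\underline\Pi}$, with part (1) read off at points of $\Sigma_{\cl}^{\sharp}$ and part (2) holding by construction. The only differences are cosmetic (you work with a basis $\omega$ of $\Dcris(\MM_{\pp_1})$ and the reciprocal ratio, and you spell out the Zariski-density/$\mathbf{j}$-independence argument and the absorption of the elementary factors of Theorem~\ref{thm:relation}, which the paper's proof leaves implicit).
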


 \begin{proof}
  Let $A : \cS_{w_1}(\uPi) \to \MM_{\pp_1}$ be some isomorphism. Since $L_{p, \As}^{\imp}(\uPi, \underline\eta)$ is not identically zero, we can consider the ratio
  \[
   R=
   \frac{1}{{}_c L_{p, \As}^{\imp}(\uPi, \uet)(1 + \bfj)}\cdot
    \left\langle \cLPR( z_{p^\infty}(\uPi)), \,
    A(\underline\eta)\right\rangle \in \cO(\cU \times \cW).
  \]
  Let $(P,Q)$ be an geometric point such that $\alpha_{P, 1} \beta_{P, 2} \ne \beta_{P, 1} \alpha_{P, 2}$ and $L_{p, \As}^{\imp}(\uPi, \uet)(P,Q)\neq 0$. Then it follows from Theorem \ref{thm:relation} that $R$ is regular at $(P,Q)$, and its value is equal to $A_P/\ES_{w_1, \Pi[P]}$. In particular, this ratio depends only on $P$, and is independent of $Q$. From the Zariski-density of the set of such $(P, Q)$, it follows that the meromorphic function $R$ on $\cU \times \cW$ is actually independent of the $\cW$ variable.

  Hence $R \in \mathcal{Q}(\cU)^\times$, regular at all points $P \in \Sigma_{\cl}^\sharp$, and coinciding at each such point with $A_P / \ES_{w_1, \Pi[P]}$. It follows that if we define $\ES_{w_1, \uPi} = R^{-1} A$, then $\ES_{w_1, \uPi}$ is regular at all points in $\Sigma_{\cl}^\sharp$ and coincides at such points with $\ES_{w_1, \Pi[P]}$. By Lemma \ref{lem:dense}, this interpolating property uniquely determines $\ES_{w_1, \uPi}$, and the reciprocity law holds by construction.
 \end{proof}

 It follows, in particular, that if $(0, 0) \in \Sigma^\sharp_{\mathrm{cl}}$ (i.e.~$L_{p, \As}^{\imp}(\Pi, \eta)$ is non-zero for some $j$ in the geometric range), and $\alpha_{1} \beta_{2} \ne \beta_{1} \alpha_{2}$, then we have an equality in $\Lambda(\Gamma)$
 \[ \left\langle \cLPR( {}_c z_{p^\infty}(\Pi)), \, \ES_{w_1, \Pi}(\eta) \right\rangle={}_c L_{p, \As}^{\imp}(\Pi, \eta) \]
 for any basis $\eta$ of $\cS_{w_1}(\Pi)$. However, if $(0, 0) \notin \Sigma^\sharp_{\mathrm{cl}}$ then we cannot rule out the possibility that $\cLPR( {}_c z_{p^\infty}(\Pi))$ is identically 0, while ${}_c L_{p, \As}^{\imp}(\Pi, \eta)$ is generically non-zero but vanishes at the finitely many characters where the regulator formula applies.

\section{Leading-term arguments and applications}

\subsection{Assumptions}

Let $\Pi$ be an automorphic representation of $G(\AA)$ as in Definition \ref{def:Pi}. We impose the following assumptions on $\Pi$:

\begin{assumption}\label{assPi}\
 \begin{enumerate}
  \item $\Pi$ is ordinary at $p$;
        \item $\Pi$ is not of parallel weight;
  \item the local Euler factor $P_p(\Pi, X)$ doe not vanish at $p^{-j}$ for any $j\in\ZZ$;
  \item $\Pi$ satisfies the ``big image'' condition (BI) of \cite[\S 9.4]{leiloefflerzerbes18}, as do all its twists $\Pi \otimes \psi$ for Dirichlet characters $\psi$ of $p$-power order and conductor coprime to some finite set of primes $S$.
 \end{enumerate}
\end{assumption}

As noted in \emph{op.cit.}, if $\Pi$ is non-CM, and there is at least one prime which ramifies in $F$ and is coprime to $\fN$, then $\Pi$ satisfies condition (BI) for a positive-density set of primes $\mathfrak{P}$ of $L$ (and this can be upgraded to ``all but finitely many primes'' if $L = \QQ$). The argument easily generalises to show that the twists $\Pi \otimes \psi$ have big image as well, after discarding a finite set of possible primes from the conductor of $\psi$ (the primes which ramify in $F' / \QQ$, where $F'$ is a finite abelian extension of $F$ determined by the inner twists of $\Pi$, see the proof of Theorem 9.4.6 of \emph{op.cit.}).

\subsection{Leading terms}

Let $\cU' = \cU_1 \cap \cU_2$, embedded diagonally in $\cU_1 \times \cU_2$. By restricting $\uPi$, we obtain a one-parameter family of Hilbert modular forms with weight $(k_1 + 2a, k_2 + 2a; w)$ for varying $a \in \cU'$. Note that the restriction of $L_{p, \As}(\uPi, \uet)$ to $\cU'$ is well-defined, and non-vanishing at 0.

Let $T$ be a uniformiser at $0 \in \cU'$. From the meromorphic reciprocity law and the non-vanishing of the $p$-adic $L$-function modulo $T$, we deduce that the $T$-adic valuation of $\cLPR( z_{p^\infty}(\uPi, \bfj))|_{\cU' \times \cW}$ is equal to the order of the pole of $\ES_{w_1, \Pi}(\uet)$. Let $c \in \ZZ_{\ge 0}$ be this common value.

Repeating the construction with the Eisenstein series replaced by a Dirichlet-character twist (but the same $\uPi$ and $\uet$), we conclude that in fact $\cLPR( z_{mp^\infty}(\uPi))|_{\cU' \times \cW}$ is divisible by $T^c$, for all $m$. So not only the $m = 1$ class of the Euler system, but in fact all of the classes, have vanishing projection to the 1-dimensional local subquotient at $p$ used to define the regulator map. So specialising at $T = 0$ gives an Euler system whose local condition is ``too strong'': it has core rank 0 in the sense of \cite{mazurrubin04}. Arguing as in \cite{LZ20-yoshida}, we conclude that in fact the specialisation at $T = 0$ must vanish identically, for all $m$; and we can therefore divide out factors of $T$ repeatedly, and compensate by removing powers of $T$ from the denominator of $\ES_{w_1, \uPi}$, until we have $c = 0$.

\begin{corollary}
 There exists an Euler system $(\hat{\mathbf{z}}_m)_{m \ge 1}$ for $V_{p, \As}(\Pi)^*$ such that the image of $\hat{\mathbf{z}}_1$ under the Perrin-Riou regulator is a scalar multiple of ${}_c L_{p, \As}(\Pi, \eta)(1 + \bfj)$.
\end{corollary}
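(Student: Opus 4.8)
The plan is to run a leading-term argument, in the style of \cite{LZ20-yoshida}, feeding the meromorphic explicit reciprocity law of \cref{thm:ERL} into the non-vanishing of the critical Asai $L$-values in the paritious case. First I would restrict the whole three-variable picture to the diagonally embedded line $\cU' = \cU_1 \cap \cU_2 \into \cU_1 \times \cU_2$, obtaining a one-parameter family of paritious Hilbert modular forms of weight $(k_1 + 2a, k_2 + 2a; w)$ passing through $\Pi$. Let $T$ be a uniformiser at $0 \in \cU'$. The key initial observation is that $\cL^{\imp}_{p,\As}(\uPi, \underline\eta)|_{\cU' \times \cW}$ is \emph{not} divisible by $T$: its reduction modulo $T$ is the one-variable $p$-adic $L$-function $\cL^{\imp}_{p,\As}(\Pi, \eta)$, and by \cref{thm:thmC} its value at a geometric $j$ differs from $\cL^{\imp}_{\As}(\Pi, j+1)$ only by a modified Euler factor and an explicit archimedean factor, both nonzero; and $\cL^{\imp}_{\As}(\Pi, j+1) \ne 0$ because, $\Pi$ being paritious, the critical points lie strictly to one side of the centre of the functional equation (\cref{note:critval}).

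Next, I would plug this into the reciprocity law $\langle \mathcal{L}_{\mathrm{PR}}(z_1(\uPi, \mathbf{j})),\, \mathrm{ES}^1_{\uPi}(\underline\eta) \rangle = {}_c\cL^{\imp}_{p,\As}(\uPi, \underline\eta)(1 + \mathbf{j})$ of \cref{thm:ERL}: since the right-hand side is not divisible by $T$ after restriction to $\cU' \times \cW$, the $T$-adic valuation of $\mathcal{L}_{\mathrm{PR}}(z_1(\uPi, \mathbf{j}))|_{\cU' \times \cW}$ equals the order $c \in \ZZ_{\ge 0}$ of the pole of $\mathrm{ES}^1_{\uPi}(\underline\eta)$ along $\{T = 0\}$. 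Running the construction of the $p$-adic $L$-function again, with the Katz Eisenstein measure replaced by its twists by finite-order characters of $p$-power conductor, turns $z_1$ into $z_m$ but changes neither $\uPi$, nor $\underline\eta$, nor $\mathrm{ES}^1_{\uPi}$; hence the same computation gives $T^c \mid \mathcal{L}_{\mathrm{PR}}(z_m(\uPi, \mathbf{j}))|_{\cU' \times \cW}$ for every squarefree $m$ coprime to $pS$.

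The remaining argument is a descending induction on $c$. If $c = 0$, then $\mathrm{ES}^1_{\uPi}$ is regular at $0$, and specialising Sheth's family Euler system $(z_m(\uPi, \mathbf{j}))$ at $T = 0$ produces an Euler system $(\hat{\mathbf{z}}_m)$ for $V_{p,\As}(\Pi)^*$ whose bottom class $\hat{\mathbf{z}}_1$ maps under $\mathcal{L}_{\mathrm{PR}}$ to the specialisation of ${}_c\cL^{\imp}_{p,\As}(\uPi, \underline\eta)(1 + \mathbf{j})$, i.e.\ to a nonzero scalar multiple of ${}_c\cL_{p,\As}(\Pi, \eta)(1 + \mathbf{j})$, which is the claim. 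If $c > 0$, then $\mathcal{L}_{\mathrm{PR}}(z_m(\uPi, \mathbf{j}))$ reduces to $0$ modulo $T$ for all $m$; equivalently the Euler system obtained by specialising at $T = 0$ has vanishing projection to the rank-one local subquotient $\MM_{\pp_1}$ at $p$ used to define $\mathcal{L}_{\mathrm{PR}}$, so its $p$-local condition is too strong — the associated Selmer system has core rank $0$ in the sense of \cite{mazurrubin04}. Here I would invoke the argument of \cite{LZ20-yoshida}: under the big-image hypotheses of \cref{assPi} (imposed on $\Pi$ and on all its twists by $p$-power-order characters), such an over-determined Euler system must vanish identically at $T = 0$, for every $m$. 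Hence $T \mid z_m(\uPi, \mathbf{j})$ for all $m$; dividing out this $T$ and removing one compensating factor of $T$ from the denominator of $\mathrm{ES}^1_{\uPi}$ preserves the reciprocity law verbatim and decreases $c$ by one. After $c$ steps we are back to the case $c = 0$ already handled.

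The step I expect to be the main obstacle is the core-rank-$0$ vanishing: one must verify that the big-image conditions of \cref{assPi} supply exactly the Mazur--Rubin inputs that force a specialised Euler system with an over-strong local condition at $p$ to vanish in the Asai ($\GL_4$) setting, and that this vanishing propagates uniformly over all levels $m$ — the technical content established in \cite{LZ20-yoshida} for the standard $L$-function of a quadratic Hilbert modular form, which has to be transported to the present situation.
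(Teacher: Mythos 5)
Your leading-term argument --- restriction to the diagonal $\cU'$, the uniformiser $T$, the identification of the common $T$-adic valuation $c$ via the meromorphic reciprocity law and the non-vanishing of the $p$-adic $L$-function modulo $T$, the propagation to all $m$ by re-running the construction with twisted Eisenstein data, and the core-rank-$0$ vanishing argument from \cite{mazurrubin04} and \cite{LZ20-yoshida} that lets you divide out $T$ and induct down to $c=0$ --- is essentially the argument the paper gives in the text preceding the corollary. Two small slips there: the twists needed to reach $z_m$ are by Dirichlet characters of conductor $m$ (prime to $p$), not of $p$-power conductor, the latter being already absorbed into the cyclotomic variable $\mathbf{j}$; and the non-vanishing of $\cL^{\imp}_{p,\As}(\Pi,\eta)$ must be read off at \emph{critical} points, where \cref{thm:thmC} applies and \cref{assPi}(2) makes the modified Euler factor nonzero --- at \emph{geometric} points the $p$-adic $L$-function interpolates regulators of Euler-system classes, not complex $L$-values, and no a priori non-vanishing is available there.

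The genuine gap is your final ``i.e.'': the construction produces a class whose regulator is a multiple of the \emph{imprimitive} function ${}_c\cL^{\imp}_{p,\As}(\Pi,\eta)(1+\mathbf{j})$, and this is \emph{not} a scalar multiple of the primitive ${}_c\cL_{p,\As}(\Pi,\eta)(1+\mathbf{j})$: the two differ by the missing Euler factors at primes dividing $N$, which are non-constant elements of the Iwasawa algebra in the variable $\mathbf{j}$ and may vanish at some characters. Since the corollary is stated for the primitive $L$-function, this upgrade is precisely the content of the paper's proof: both the Euler system and the $p$-adic $L$-function depend on a choice of test data at the bad primes for $\Pi$, both transform identically under change of that data, and the space of linear forms with this equivariance property is one-dimensional; hence the reciprocity law established for one choice of test data holds for every choice, in particular for the (inexplicit) test data yielding the optimal, primitive $p$-adic $L$-function. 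You need this step, or some substitute for it, to close the argument.
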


\begin{proof}
 The above construction gives an Euler system mapping to a multiple of the imprimitive $L$-series ${}_c L_{p, \As}^{\imp}(\Pi, \eta)(1 + \bfj)$. However, both the Euler system and the $p$-adic $L$-function depend on a choice of test data at the bad primes for $\Pi$, and both have the same equivariance property when the test data changes. Since the space of linear forms with this equivariance property is 1-dimensional, we obtain the reciprocity law for any choice of test data, including the (inexplicit) test data which give the optimal $p$-adic $L$-function.
\end{proof}

\subsection{Arithmetic applications}

Let $T\subset V_{p, \As}(\Pi)$ be a $G_{\QQ}$-invariant lattice, and define $A=T\otimes \Qp/\Zp$. Let $\cF_1^+A$ be the corank 2 $R$-submodule of $A$ dual to the kernel $\cF_1^+ V_{p, \As}(\Pi)^*\to M_{\pp_1}$.

We can now harvest some applications to the Iwasawa Main Conjecture, following \cite[\S 9]{leiloefflerzerbes18}.
\begin{definition}
 Define the Selmer group
 \[
 \mathrm{Sel}^{(\pp_1)}(\QQ(\mu_{p^\infty}),A)=\left\{\begin{array}{l}
  x\in H^1(\QQ(\mu_{p^\infty}),A):\, \loc_\ell(x)=0\, \, \, \forall \, \ell\neq p\\ \loc_p(x)\in\mathrm{image}\, H^1(\Qp(\mu_{p^\infty}), \cF_1^+ A)\end{array}\right\},
 \]
 and write $X^{(\pp_1)}(\QQ(\mu_{p^\infty}),A)$ for its Pontryagin dual.
\end{definition}

Let $\Gamma=\Gal(\QQ(\mu_{p^\infty}/\QQ)$, and let $\Lambda=R\otimes_{\ZZ_p}\ZZ_p[[\Gamma]]$,  where $R$ is the ring of integers of $L_{\mathfrak{P}}$.

\begin{theorem}\label{thm:IMC}
 Assume that Assumptions \ref{assPi} are satisfied. There exists an integer $r$ such that
 \[ \mathrm{char}_{\Lambda}\left(X^{(\pp_1)}(\QQ(\mu_{p^\infty}),A)\middle)\, \right|\, p^r \cdot L_{p, \As}(\Pi, 1 + \bfj).\]
\end{theorem}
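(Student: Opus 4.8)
The plan is to deduce Theorem~\ref{thm:IMC} from the Euler system machinery of Mazur--Rubin and Rubin, applied to the Euler system $(\hat{\mathbf{z}}_m)$ for $V_{p,\As}(\Pi)^*$ produced by the Corollary above, together with the explicit reciprocity law relating $\hat{\mathbf{z}}_1$ to ${}_c\cL_{p,\As}(\Pi,\eta)(1+\mathbf{j})$. First I would recall from \cite[\S 9]{leiloefflerzerbes18} the setup: the module $\cF_1^+ V_{p,\As}(\Pi)^*$ defines the relevant local condition at $p$ on $V_{p,\As}(\Pi)^*$, the quotient $M_{\pp_1}$ (after an unramified twist) carries Perrin-Riou's regulator map $\mathcal{L}_{\mathrm{PR}}$, and the Selmer group $\mathrm{Sel}^{(\pp_1)}(\QQ(\mu_{p^\infty}),A)$ is the one ``orthogonal'' to this. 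Under Assumptions~\ref{assPi} — crucially the big-image condition (BI) for $\Pi$ \emph{and all its $p$-power-order twists}, which is exactly what is needed for the anticyclotomic/cyclotomic twisting arguments — the Euler system $(\hat{\mathbf{z}}_m)$ satisfies the hypotheses of the Euler system divisibility theorem.

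The key steps, in order: (1) Verify that $(\hat{\mathbf{z}}_m)$ is a genuine Euler system for $V_{p,\As}(\Pi)^*$ in the sense required — this is inherited from Sheth's construction and \cite{leiloefflerzerbes18}, with the norm-compatibility over $\QQ(\mu_m)$ and the Euler-factor relations at primes $\ell \nmid pS$ built in. (2) Apply the Euler system bound: the characteristic ideal of $X^{(\pp_1)}(\QQ(\mu_{p^\infty}),A)$ divides the characteristic ideal of $\Lambda / (\text{image of } \mathcal{L}_{\mathrm{PR}}(\hat{\mathbf{z}}_1))$ in the quotient module, up to error terms supported at the height-one primes where the big-image or non-degeneracy hypotheses might fail — these contribute only powers of $p$ (hence the $p^r$). (3) Use Part~(1) of the reciprocity law / the Corollary: $\mathcal{L}_{\mathrm{PR}}(\hat{\mathbf{z}}_1)$ equals, up to a unit and the $c$-factor, the image of ${}_c\cL_{p,\As}(\Pi,\eta)(1+\mathbf{j})$ under the (now integral, after clearing denominators) map $\mathrm{ES}^1_\Pi$. (4) Absorb the $c$-factor $(c^2 - c^{-t}\chi(c)^{-1})$, which can be chosen coprime to $p$ and a non-zero-divisor, so it does not affect the characteristic ideal up to the allowed power of $p$; also absorb Harris's occult period $\Omega_p(\Pi)$ and the choice of lattice $T$ into the power $p^r$. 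This yields $\mathrm{char}_\Lambda(X^{(\pp_1)}) \mid p^r \cdot \cL_{p,\As}(\Pi, 1+\mathbf{j})$.

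I would be careful about one subtlety: the Euler system bound of Mazur--Rubin/Rubin naturally bounds the \emph{strict} Selmer group (no condition at $p$) or a Selmer group cut out by the local condition \emph{complementary} to the one the Euler system class lives in; matching this with the specific $\mathrm{Sel}^{(\pp_1)}$ defined here via $\cF_1^+ A$ requires the observation that $\cF_1^+ V_{p,\As}(\Pi)^*$ maps onto $M_{\pp_1}$ with $2$-dimensional kernel, so that the relevant rank-$1$ local subquotient at $p$ is exactly $M_{\pp_1}$ and Perrin-Riou's map lands in $\mathbf{D}_{\mathrm{cris}}(M_{\pp_1}(-\mathbf{j}))$; this is precisely the normalization in \cite[\S 9]{leiloefflerzerbes18}, so I would cite that and check the core-rank-$1$ condition (Assumption~\ref{assPi}(2), non-vanishing of the Euler factor at $p^{-j}$, guarantees the regulator is non-degenerate and the Euler system is non-trivial — which is why a \emph{non-trivial} divisibility, rather than the vacuous one, is obtained).

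The main obstacle I expect is \textbf{not} the Euler-system input itself (which is by now standard, modulo verifying hypotheses), but rather ensuring that the Euler system $(\hat{\mathbf{z}}_m)$ genuinely has \emph{non-zero} image under $\mathcal{L}_{\mathrm{PR}}$ — equivalently, that ${}_c\cL_{p,\As}(\Pi, 1+\mathbf{j})$ is non-zero as an element of $\Lambda(\Gamma)$. This is where the leading-term construction of the previous section is essential: if $(0,0)\notin\Sigma^\sharp_{\mathrm{cl}}$ one cannot directly conclude non-triviality from a single classical specialization, but the construction of $(\hat{\mathbf{z}}_m)$ via dividing out the exact power $T^c$ of the uniformiser at $0\in\cU'$ guarantees that the resulting class maps to a \emph{scalar multiple} (a priori possibly zero, but in fact non-zero since the $p$-adic $L$-function restricted to $\cU'$ is non-vanishing at $0$ and $c$ was chosen minimal) of ${}_c\cL_{p,\As}(\Pi,\eta)(1+\mathbf{j})$. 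So the real content is bookkeeping: tracking how the power $p^r$ accumulates from (a) the big-image error terms in the Euler system argument, (b) the occult period and lattice choices, and (c) the $c$-smoothing factor, and confirming that each contributes only powers of $p$ and never an extra non-$p$ factor that would spoil the divisibility. I would organize the proof around these three sources and invoke \cite[Theorem 9.x]{leiloefflerzerbes18} (the $\GL_2\times\GL_2$-over-$F$ analogue, i.e.\ the Asai case) essentially verbatim, with the new input being only the improved reciprocity law (Theorem~\ref{thm:ERL} / the Corollary) which removes the need for an unproven Eichler--Shimura interpolation statement.
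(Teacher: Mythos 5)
Your overall strategy is the same as the paper's: the divisibility with the $c$-factor present is obtained by feeding the Euler system $(\hat{\mathbf{z}}_m)$ and the reciprocity law (Theorem~\ref{thm:ERL} together with the Corollary) into the Euler-system/Selmer-group machinery of \cite[Theorem 9.5.3]{leiloefflerzerbes18}, with the big-image hypotheses of Assumptions~\ref{assPi} justifying the application and the ambiguities (lattice, periods, normalisation of $\cL_{p,\As}$) absorbed into $p^r$. Your identification of the leading-term construction as the essential new input is also correct.

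However, your step (4) contains a genuine gap. The smoothing factor is not a scalar: in the Iwasawa-theoretic statement it is the element $c^2 - c^{2\mathbf{j}-k_1-k_2}\chi(c)^{-1}$ of $\Lambda(\Gamma)$, where $\mathbf{j}$ is the universal cyclotomic variable, i.e.\ $c^{2\mathbf{j}}$ is (up to a unit) the group-like element $[c]^2$. Such an element is in general a \emph{non-unit} of the Iwasawa algebra with genuine zeros on weight space; for instance when $\chi=1$ it vanishes at $\mathbf{j} = 1 + \tfrac{k_1+k_2}{2}$ (an integer, since $k_1\equiv k_2 \bmod 2$), and this zero persists for \emph{every} admissible choice of $c$. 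So the factor is ``coprime to $p$'' in your sense and a non-zero-divisor, but that does not let you absorb it into $p^r$: the divisibility $\mathrm{char}\mid p^r\cdot(c\text{-factor})\cdot\cL_{p,\As}$ does not imply $\mathrm{char}\mid p^r\cdot\cL_{p,\As}$ when the $c$-factor shares a height-one prime with $\cL_{p,\As}$ or with the characteristic ideal. The paper's proof spends its entire second half on exactly this point: when $\chi\neq 1$ one can choose $c$ so that the factor is a unit in $\Lambda(\Gamma)$; when $\chi=1$ no such $c$ exists, and one instead invokes compatibility of the main conjecture with the functional equation $\mathbf{j}\leftrightarrow k_1+k_2+1-\mathbf{j}$, under which the $c$-factor and its image are coprime, so the unwanted divisor can be eliminated by comparing the two divisibilities. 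You need to supply one of these arguments (or an equivalent one, e.g.\ varying $c$ and taking a gcd, which again fails in the $\chi=1$ case because of the fixed zero) to close the proof.
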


\begin{proof}
 It is immediate from Theorem 9.5.3 in \emph{op.cit.} and Theorem \ref{thm:ERL} that the characteristic ideal of the Selmer group divides $p^r {}_cL_{p, \As}(\Pi, \eta)(1 + \bfj)$, for some $r$. It remains to get rid of the factor $(c^2 - c^{2\bfj - k_1 - k_2}\chi(c)^{-1})$ relating the $p$-adic $L$-functions with and without the $c$-subscript.

 If the Dirichlet character $\chi$ is non-trivial, then we can choose $c$ such that the $c$-factor is a unit in the Iwasawa algebra. If $\chi = 1$, then we use the compatibility of the Iwasawa main conjecture with functional equations. The functional equation interchanges $\bfj$ with $k_1 + k_2 + 1 - \bfj$, so the $c$-factor $c^2 - c^{2\bfj - k_1 - k_2}$ is coprime to its image under this involution.
\end{proof}

\begin{remark}
 The arbitrary power of $p$ is inevitable since $L_{p, \As}(\Pi)$ is itself only well-defined up to an arbitrary scalar factor. It should be possible to nail down a canonical normalisation up to $p$-adic units using integrality of equivariant $p$-adic $L$-functions as in \cite{LZ20-yoshida}, but we have not attempted this here for reasons of space.
\end{remark}

By descent from $\QQ(\mu_{p^\infty})$ to $\QQ$, we also obtain cases of the Bloch--Kato conjecture for critical values of the $L$-series.

\begin{theorem}\label{thm:BK}
 Assume that the Assumptions \ref{assPi} are satisfied. Let $j$ be a critical value. Then the Bloch--Kato Selmer group
 \[ H^1_\f(\QQ,V_{p, \As}(\Pi)^*(-j))\]
 is zero.
\end{theorem}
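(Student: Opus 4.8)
The plan is to deduce Bloch--Kato from the Iwasawa Main Conjecture inclusion (Theorem \ref{thm:IMC}) by a standard descent argument from $\QQ(\mu_{p^\infty})$ to $\QQ$, combined with the non-vanishing of the relevant specialisation of the $p$-adic $L$-function.

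First I would recall that $j$ is a critical value, so by \cref{note:critval} we have $k_2 + 1 \le j \le k_1$ (in the normalisation used here, where the representation $V = V_{p,\As}(\Pi)$ has been twisted to have Hodge--Tate weights $\{0, -1-k_1, -1-k_2, -2-k_1-k_2\}$, and $(P,Q) = (0, j)$ lies in $\widetilde\Sigma_{\mathrm{geom}}$ since $0 \le j \le \min(k_1, k_2)$ is equivalent to $j \le k_2$... so one must be careful: the geometric and critical ranges are disjoint, and $j$ being critical means $(0,j)$ is $1$-dominant critical rather than geometric). The key input is that at a critical value the archimedean Euler factor is non-zero and the complex $L$-value $\cL_{\As}^{\mathrm{imp}}(\Pi, j+1)$ is non-zero (since the central point is a half-integer, so all critical integer points are in the region of absolute convergence or related to it by the functional equation and are non-vanishing); combined with the interpolation formula of \cref{thm:thmC}, this shows $\cL_{p,\As}^{\mathrm{imp}}(\Pi, \eta)$ does not vanish at the point corresponding to $j$. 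Hence the characteristic ideal of $X^{(\pp_1)}(\QQ(\mu_{p^\infty}), A)$ divides something non-zero at this point, so after specialising the Iwasawa module at the character $x \mapsto x^j$ (i.e. the relevant height-one prime of $\Lambda$), the corresponding Selmer group over $\QQ$ is finite.

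Next I would run the control/descent step: using Poitou--Tate and the standard exact sequences (as in \cite[\S 9]{leiloefflerzerbes18}), one relates the specialisation of $X^{(\pp_1)}(\QQ(\mu_{p^\infty}), A)$ at the character of conductor dividing $p^\infty$ corresponding to the critical twist to the Bloch--Kato Selmer group $H^1_{\f}(\QQ, V_{p,\As}(\Pi)^*(-j))$; the local condition $\cF_1^+$ at $p$ matches the Bloch--Kato local condition precisely in the critical range because the Hodge--Tate weights are such that $\cF_1^+$ is the ``correct'' part of the filtration (one checks $D_{\mathrm{cris}}(\cF_1^+ V^*(-j))^{\varphi=1} = 0$ and $H^0, H^2$ of the relevant local Galois cohomology vanish, so the $\cF_1^+$-Selmer and Bloch--Kato Selmer conditions agree). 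Finiteness of the Iwasawa-theoretic Selmer group specialisation then forces $H^1_{\f}(\QQ, V_{p,\As}(\Pi)^*(-j)) = 0$, since this is a $\QQ_p$-vector space and finiteness of a $\QQ_p/\ZZ_p$-version forces vanishing.

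The main obstacle I expect is checking the compatibility of local conditions at $p$ precisely at the critical twists — i.e. verifying that the Greenberg-style $\cF_1^+$ local condition used to define $X^{(\pp_1)}$ agrees with the Bloch--Kato finite local condition after twisting by $-j$ for $j$ in the critical range, which requires controlling the crystalline Frobenius eigenvalues on the relevant rank-one subquotients (that none of $\alpha_1\alpha_2 p^{-j}$, $\alpha_1\beta_2 p^{-j}$, etc., equals $1$) — this is exactly where the hypothesis that $P_p(\Pi, X)$ does not vanish at $p^{-j}$ enters. A secondary point is ensuring the arbitrary power of $p$ in \cref{thm:IMC} does not obstruct the conclusion: since $H^1_{\f}(\QQ, V_{p,\As}(\Pi)^*(-j))$ is a $\QQ_p$-vector space, the bound ``characteristic ideal divides $p^r \cdot (\text{non-zero element})$'' still forces the Selmer $\QQ_p$-module to be zero, as the $p^r$ becomes invisible after inverting $p$. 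Once these are in place, the theorem follows exactly as Corollary 9.2.4 / the Bloch--Kato corollary in \cite{leiloefflerzerbes18}, applied to the Euler system $(\hat{\mathbf{z}}_m)$ from the Corollary above together with the reciprocity law of \cref{thm:ERL}.
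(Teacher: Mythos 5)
Your proposal is correct and follows essentially the same route as the paper: the paper's proof is a one-line deduction from Theorem \ref{thm:IMC} via the descent argument of \S 11.7 of \cite{KLZ17}, using the automatic non-vanishing of the critical $L$-values (the central point being a half-integer), with the non-vanishing of the local Euler factor at $p^{j}$ ensuring both the interpolation factor and the comparison of the Greenberg-style $\cF_1^+$ condition with the Bloch--Kato condition behave as you describe. Your additional remarks on the $p^r$ ambiguity being harmless and on the critical-versus-geometric range are accurate and consistent with the paper.
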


\begin{proof}
 This follows from the previous theorem via the same arguments as in \S 11.7 of \cite{KLZ17} in the Rankin--Selberg case, using the fact that the critical $L$-values are automatically non-zero (as the central value is not an integer).
\end{proof}
\newpage

\section{Appendix: variants of $\eta$}
\label{sect:variantsofeta}

To prove Theorem \ref{thm:regulator} (Theorem \ref{intro:regformula}) we make use of different variants of $\eta$, which we summarize below, together with their relations and a sketch of their roles in the proof, for the reader's convenience.

\begin{center}
    \tabulinesep=1mm
    $\begin{tabu}{c|c|c}
    \text{Notation} & \text{Cohomology group} & \text{Definition} \\
    \tabucline \\
    \eta & H^1\left(X_{G, \sph} / L, \omega^{(-k_1,k_2+2; w)}(-D_G)\right) & \text{Def. \ref{def:etabasic}}\\
    \eta_{\Iw(\pp_1)}&H^1\left(\cX_{G, \Iw(\pp_1)}, \omega^{(-k_1, k_2 + 2)}(-D_G)\right)& \text{Proposition \ref{prop:compnus}}\\
    \eta^{(1-\ord)} &H^1_c\left(\cX_{G, \sph}^{(1-\ord)}, \omega^{(-k_1, k_2 + 2)}(-D_G)\right)& \text{Theorem \ref{thm:nu1ord}}\\
    \eta^{(1-m)}_{\Iw(\pp_1)} &H^1_c\left(\cX_{G, \Iw(\pp_1)}^{(1-m)}, \omega^{(-k_1, k_2 + 2)}(-D_G)\right)& \text{Proposition \ref{prop:compnus}}\\
    \eta^{\ord} &H^1_{c1}\Big(\cX_G^{\ord}, \omega^{(-k_1, k_2 + 2)}(-D_G)\Big)& \text{Notation \ref{not:nuord}}\\
    \dot\eta_j^{\ord}&H^1_{c1}(\cX^{\ord}_G, \Fil^j\cV\otimes \Omega^1\langle -D_G\rangle)& \text{Def. \ref{def:nudot}}\\
    \eta_{\dR}&D_p^{\As}(\Pi)&\text{Proposition \ref{prop:eta}}\\
    \eta_{\dR}^{(1-\ord)} &H^2_{\rig, c}(\cX^{(1-\ord)}_{G, 0}, \cV_G\langle -D_G\rangle)& \text{Notation \ref{not:classfromdR}}\\
    \eta_{\fp}&H^2_{\fp, c}\left(\YY_G, \cV_G; 1+j, P_{1+j}\right)& \text{Proposition \ref{prop:defnufp}}\\
    \tilde\eta^{(1-\ord)}_{\fp}&\wH^2_{\fp, c}(\cX_G^{(1-\ord)}, \cV\langle-D_G\rangle; 1+j, P_{1+j}) & \text{Proposition \ref{prop:lasteta}}\\
    \tilde\eta^{\ord}_{\fp}& \wH^2_{\fp, c1}(\cX^{\ord}_G, \cV\langle -D_G\rangle; 1+j, P_{1+j})& \text{Notation \ref{not:nuresord}}\\
    \underline\eta&H^1_{\m, \m}(\kappa_{\cU_1}, \kappa_{\cU_2})[\uPi]&\text{Def. \ref{def:ueta}}\\
    \end{tabu}$
\end{center}

\vspace{2ex}

The Asai $p$-adic $L$-function of \cite{grossiloefflerzerbesLfunct} is constructed as a pairing of the class $\eta^{(1-m)}_{\Iw(\pp_1)}$ with the pushforward of a $p$-adic family of Eisenstein series (specialising to $p$-adic modular forms with Iwahori level at $p$). The regulator computations take place at spherical level (as this is where the Euler system classes appear), therefore after recalling the compatibilities between spherical and $\pp_1$-Iwahori level classes in $\S4$ (both the \emph{classical} ones and the partial ordinary loci ones), the rest of the proof of the theorem deals with classes at spherical level. The class $\eta$ gives rise to a class in de Rham cohomology $\eta_\dR$. Proposition \ref{prop:extbyzero} lifts such class to a class $\eta_{\dR}^{(1-\ord)}$ over the $\pp_1$-ordinary locus compatibly to the lift $\eta^{(1-\ord)}$ of $\eta$ by Proposition \ref{prop:nu1ord2}.

The first step to compute the regulator pairing is to reduce it to a pairing in finite-polynomial cohomology; this is done in $\S6$, where the class $\eta_{\fp}$ is introduced. This class also has a lift over the $\pp_1$-ordinary locus $ \tilde\eta^{(1-\ord)}_{\fp}$, which translates the pairing in finite-polynomial cohomology over $\YY_H$ into a pairing in Gross fp-cohomology of the ordinary locus of the modular curve. This is good because the image of the syntomic Eisenstein class on the ordinary locus can be described explicitly as in \cite{KLZ20} (see Proposition \ref{prop:Eisexplicit}).

So all it is left to do in order to compute the pairing with such class is describe $ \tilde\eta^{(1-\ord)}_{\fp}$. In order to have liftings of both partial Frobenii, we restrict the class to the full ordinary locus to obtain $\tilde\eta^{\ord}_{\fp}$. This is where the \emph{Pozn\'an spectral sequence} comes into play, providing a link back to coherent cohomology: our class is \emph{represented by a coherent fp pair of degree $(1,1)$} given by $(\eta^\ord,\xi)$ where $\eta^\ord$ is the restriction of $ \eta^{(1-\ord)}$ to the full ordinary locus and $\xi$ is another class in coherent cohomology which is an $\alpha_1$-eigenvector for the partial Frobenius $\varphi_1$ and is annihilated by $U_{\pp_2}$ (see $\S8.3$). The dotted version of these classes $(\dot{\eta}^\ord_j,\dot{\xi})$ are the lifts from the BGG complex to the full de Rham complex.

The final step is to consider certain unit root splittings for the coefficients of the cohomology where our final classes belong: Corollary \ref{cor:cohexplicit} states that the map obtained by pairing with the pullback of $\dot{\eta}^\ord_j$ factors through the unit root splitting and (up to some factors) is given by pairing with the pullback of $ \eta^{(1-\ord)}$. The properties of $\xi$ and the coherent description of the Eisenstein class come together in the final computations, where the compatibility results of $\S4$ allow us to bring back the Iwahori level class $\eta^{(1-m)}_{\Iw(\pp_1)}$ and provide the link with the $p$-adic $L$-function.

To get the version of Theorem \ref{intro:regformula} \emph{in families}, Theorem \ref{thm:ERL}, (which then allows to specialise at critical points and obtain Theorem \ref{intro:BKIwapplication}), we use the final class $\underline{\eta}$ appearing in the table, which specialises (up to periods) to $\eta_{\Iw(\pp_1)}$ at certain classical points.

\newpage
\begin{landscape}
\vspace{15cm}

\begin{center}
    Relations between the classes at spherical level
\end{center}
\vspace{2ex}

\[
\begin{diagram}
\tilde\eta^{\ord}_{\fp}&&\tilde\eta^{(1-\ord)}_{\fp}&&  \eta_{\fp}&&\\
\text{\rotatebox[origin=c]{270}{$\in$}}  &&\text{\rotatebox[origin=c]{270}{$\in$}} &&  \text{\rotatebox[origin=c]{270}{$\in$}}&&\\
 \wH^2_{\fp, c1}(\cX^{\ord}_{G, \sph}, \cV_G\langle -D_G\rangle; 1+j, P_{1+j})&\lTo^{\quad\mathrm{res}\quad }&
 \wH^2_{\fp, c}(\cX_{G, \sph}^{(1-\ord)}, \cV_G\langle-D_G\rangle; 1+j, P_{1+j}) &
 \rTo^{\quad \text{ext-by-0}\quad} &
 H^2_{\fp, c}\left(\YY_{G, \sph}, \cV_G; 1+j, P_{1+j}\right) &&\\
   &&\dTo^{\mathrm{proj}} && \dTo^{\mathrm{proj}} & &\\
   &&H^2_{\rig, c}(\cX^{(1-\ord)}_{G, \sph, 0}, \cV_G\langle -D_G\rangle) &  \rTo^{\quad \text{ext-by-0}\quad} & H^2_{\dR}(X_{G, \sph}, \cV_G\langle-D_G\rangle) & \supset & D_p^{\As}(\Pi)\\
 &&\text{\rotatebox[origin=c]{90}{$\in$}} && \twoheaddownarrow &&\text{\rotatebox[origin=c]{90}{$\in$}}\\
  &&\eta_{\dR}^{(1-\ord)}&&\Gr^{(0,k_2+1)}&& \eta_{\dR}\\
 &&&& \text{\rotatebox[origin=c]{90}{$\cong$}} &&\\
 H^1_{c1}\Big(\cX_{G, \sph}^{\ord}, \omega^{(-k_1, k_2 + 2)}(-D_G)\Big)&\lTo^{\quad\mathrm{res}\quad }&H^1_c\left(\cX_{G, \sph}^{(1-\ord)}, \omega^{(-k_1, k_2 + 2)}(-D_G)\right) &  \rTo^{\quad \text{ext-by-0}\quad} &H^1\left(X_{G, \sph} / L, \omega^{(-k_1,k_2+2; w)}(-D_G)\right)  &&\\
\text{\rotatebox[origin=c]{90}{$\in$}}  &&\text{\rotatebox[origin=c]{90}{$\in$}} &&  \text{\rotatebox[origin=c]{90}{$\in$}}&&\\
\eta^{\ord}&&\eta^{(1-\ord)}&&  \eta&&
\end{diagram}
\]
\end{landscape}

\newcommand{\noopsort}[1]{\relax}
\providecommand{\bysame}{\leavevmode\hbox to3em{\hrulefill}\thinspace}
\providecommand{\MR}[1]{%
 MR \href{http://www.ams.org/mathscinet-getitem?mr=#1}{#1}.
}
\providecommand{\href}[2]{#2}
\newcommand{\articlehref}[2]{\href{#1}{#2}}

\end{document}